\documentclass[a4paper,11pt,oneside]{article}

\linespread{1.0}
\usepackage[utf8]{inputenc}
\usepackage[english]{babel}
\usepackage[T1]{fontenc}           	
\usepackage{amsmath}
\usepackage{amssymb}
\usepackage{amsfonts}          		
\usepackage[all]{xy}
\usepackage{amscd}
\usepackage{amsthm}			

\usepackage{color}
\usepackage{psfrag}
\usepackage[nottoc]{tocbibind}
\usepackage{framed,color}
\usepackage[new]{old-arrows}
\usepackage[]{listings}
\usepackage[]{float}
\usepackage{hyperref}
\usepackage[dvips,final]{graphicx}
\usepackage[dvips]{geometry}
\usepackage{epsfig}
\usepackage{latexsym}

\usepackage{bm}
\usepackage{comment}
\usepackage{xfrac}
\usepackage{microtype}
\usepackage{epstopdf}
\usepackage{scalerel}
\usepackage{lmodern}
\usepackage{textcomp}
\usepackage{tipa}

\usepackage{enumitem}

\usepackage[all]{pstricks}
\usepackage{caption}
\usepackage{subcaption}
\usepackage{mathtools}
\DeclarePairedDelimiter\ceil{\lceil}{\rceil}
\DeclarePairedDelimiter\floor{\lfloor}{\rfloor}

\newcounter{count}[section]

\theoremstyle{plain}
\newtheorem{theorem}[count]{Theorem}
\newtheorem{prop}[count]{Proposition}
\newtheorem{cor}[count]{Corollary}
\newtheorem{lemma}[count]{Lemma}

\newtheoremstyle{named}{}{}{\itshape}{}{\bfseries}{.}{ }{#1 \thmnote{#3}}
\theoremstyle{named}
\newtheorem*{namedtheorem}{Theorem}

\theoremstyle{definition}
\newtheorem{definition}[count]{Definition}
\newtheorem{ex}[count]{Example}
\newtheorem{remark}[count]{Remark}

\newtheorem{conj}{Conjecture}

\newcommand{\Z}{\mathbb{Z}}
\newcommand{\Q}{\mathbb{Q}}
\newcommand{\R}{\mathbb{R}}

\newcommand{\D}{\mathbb{D}}

\begin{document}
\title{L-spaces, taut foliations and the Whitehead link}
\author{Diego Santoro}
\date{}

\maketitle

\begin{abstract}

    We prove that if $M$ is a rational homology sphere that is a Dehn surgery on the Whitehead link, then $M$ is not an $L$-space if and only if $M$ supports a coorientable taut foliation. The left orderability of some of these manifolds is also proved, by determining which of the constructed taut foliations have vanishing Euler class.
    
    We also present some more general results about the structure of the $L$-space surgery slopes for links with two unknotted components and linking number zero, and about the existence of taut foliations on the fillings of a $k$-holed torus bundle over the circle with some prescribed monodromy. Our results, combined with some results from \cite{RSS}, also imply that all the rational homology spheres that arise as integer surgeries on the Whitehead link satisfy the L-space conjecture. 
    
\end{abstract}

\section{Introduction}
In this paper we study the rational homology spheres obtained as surgery on the Whitehead link, motivated by the following conjecture.

\begin{conj}[L-space conjecture]\label{L space conjecture} 
For an irreducible oriented rational homology $3$-sphere $M$, the following are
equivalent:
\begin{enumerate}[label=\arabic*)]
    \item  $M$ supports a cooriented taut foliation;
    \item  $M$ is not an L-space, i.e. its Heegaard Floer homology is not minimal;
    \item  $M$ is left orderable, i.e. $\pi_1(M)$ is left orderable.
\end{enumerate}
\end{conj}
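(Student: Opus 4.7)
The plan is to establish the six implications among (1), (2), (3) separately, noting that only (1) implies (2) is known in full generality; the remaining five implications constitute the content of the conjecture and are wide open at the present level of generality. My overall strategy for the hardest direction, (2) implies (1), is a reduction along the JSJ decomposition to a case-by-case analysis on geometric pieces, followed by a gluing argument.

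For (1) implies (2), I invoke the theorem of Eliashberg--Thurston: a cooriented taut foliation $\mathcal{F}$ on $M$ can be $C^{0}$-approximated by a positive and a negative tight contact structure, each of which is weakly symplectically semifillable. By Ozsv\'ath--Szab\'o, the contact invariant of such a structure is nonzero in $\widehat{HF}(-M)$, which forces $\operatorname{rk}\widehat{HF}(M) > |H_{1}(M;\Z)|$, i.e.\ $M$ is not an L-space. This is the only implication I can treat without loss of generality.

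For (2) implies (1), I first reduce to the prime and then the JSJ decomposition of $M$. For Seifert fibered pieces one has the classification of taut foliations due to Eisenbud--Hirsch--Neumann and Jankins--Neumann, combined with the Lisca--Stipsicz characterisation of Seifert L-spaces. For irreducible graph manifolds one applies the Hanselman--Rasmussen--Rasmussen--Watson criterion, which expresses the L-space property in terms of the slope sets carried by each JSJ vertex, together with the compatible taut-foliation gluing theorem of Boyer--Clay. The hyperbolic case is the genuinely open one: any approach must produce a taut foliation from the mere failure of the L-space property, plausibly via an essential lamination carried by a branched surface of the kind constructed in this paper for the Whitehead link, then promoted to a taut foliation after analysing its transverse structure. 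This is where I expect essentially all of the difficulty to lie, and I do not see a current route to handling it in full generality.

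For the equivalences involving (3): to prove (1) implies (3) under the auxiliary assumption that $\mathcal{F}$ has vanishing Euler class, the leaf space of the pullback of $\mathcal{F}$ to $\widetilde{M}$ provides a faithful action of $\pi_{1}(M)$ on $\R$ and hence a left order. In general one must first upgrade $\mathcal{F}$ to a taut foliation with vanishing Euler class, a step carried out for Whitehead link surgeries in the present paper. The direction (3) implies (2) has no general approach at present; partial results rely on representations into $\widetilde{\mathrm{Homeo}}^{+}(\bS^{1})$ whose Euler class pairs nontrivially with Heegaard Floer data. Together with (2) implies (1) in the hyperbolic setting, this constitutes the principal open part of the conjecture.
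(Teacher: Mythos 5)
The statement you were asked about is a \emph{conjecture}, not a theorem of the paper: the paper never claims a proof of it, and indeed no proof exists. What the paper actually proves are special cases — the equivalence $1)\Leftrightarrow 2)$ for all rational homology spheres obtained by Dehn surgery on the Whitehead link (Theorem \ref{theorem}), left orderability for those surgeries whose constructed foliations have vanishing Euler class (Theorem \ref{theorem euler}), and the full conjecture for integer surgeries on {\rm WL} by combining with \cite{RSS} and \cite{Z}. So there is no ``paper proof'' for your proposal to be measured against, and your text, by your own explicit admission, is not a proof either: it is a survey of known partial results together with an honest identification of where the difficulty lies. That identification is accurate — $2)\Rightarrow 1)$ for hyperbolic manifolds and $3)\Rightarrow 2)$ in general are open, and the graph manifold case is settled by the results you cite (the paper points to \cite{RS,BC,HRRW}) — but a reduction to an open problem is a gap, not a step.

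Two smaller technical points on the parts you do sketch. First, for $1)\Rightarrow 2)$ the Eliashberg--Thurston perturbation argument as classically stated requires the foliation to have some smoothness (at least $C^2$), whereas the foliations relevant here, including every foliation constructed in this paper, are only of class $C^{\infty,0}$; closing that gap is precisely the contribution of Bowden and of Kazez--Roberts, which is why the paper cites \cite{OS}, \cite{Bow} and \cite{KR2} together for this implication. Your argument is incomplete without that ingredient. Second, your sketch of $1)\Rightarrow 3)$ is only a conditional statement (vanishing Euler class), which matches Theorem \ref{euler class} of \cite{BH} as used in Section 4 of the paper; it does not give the unconditional implication asserted by the conjecture, since a taut foliation with nonzero Euler class cannot in general be upgraded, and indeed the paper's own foliations have nonvanishing Euler class unless $|q_i|\equiv 1 \pmod{p_i}$. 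In short: your write-up is a reasonable account of the state of the art, but the statement remains a conjecture, and nothing in your proposal changes that.
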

The equivalence between $1)$ and $2)$ was conjectured by Juh\'asz in \cite{J}, while the equivalence between $2)$ and $3)$ was conjectured by Boyer, Gordon and Watson in \cite{BGW}.

Even if the properties involved in the conjecture are very different in flavour and nature it follows by the works of Oszv\'ath-Szab\'o \cite{OS}, Bowden  \cite{Bow} and Kazez-Roberts \cite{KR2} that $1)$ implies $2)$. Moreover it is now known that the conjecture holds for all the graph manifolds (\cite{RS,BC,HRRW}). 

It is therefore interesting to study the conjecture in the case of hyperbolic manifolds. In this direction, in \cite{Z} the conjecture is proved for some manifolds obtained by considering mapping tori of pseudo-Anosov diffeomorphisms of closed surfaces and then by surgering on some collections of closed orbits. 

In addition, in \cite{D}, the conjecture is tested on a census of more than $300,000$ hyperbolic rational homology spheres and proved for more than $60\%$ of these manifolds.
\\

One way of producing rational homology spheres is via Dehn surgery on knots or links in $S^3$. When dealing with surgeries on knots, the different aspects of this conjecture have been studied separately in several papers.
For example it has been proved that if a knot $K$ admits a positive surgery that is an L-space then $K$ is fibered \cite{G, Ni}, strongly quasipositive \cite{Hedden} and the $r$-framed surgery along $K$ is an L-space if and only if $r\geq 2g(K)-1$, where $g(K)$ denotes the genus of the knot $K$ \cite{KMOS}. 

Taut foliations on surgeries on knots are constructed, for example, in \cite{R1}, \cite{LR}, \cite{DR1}, \cite{DR2}, \cite{K} and it is possible to prove the left orderability of some of these manifolds by determining which of these foliations have vanishing Euler class, as done in \cite{H}. Another approach to study the left orderability of surgeries on knots is via representation theoretic methods, as presented in \cite{CuD}.
\\

On the other hand, not much is known when it comes to the study of surgeries on links. Some results regarding integer L-space surgeries on links in $S^3$ are presented in \cite{GN}, \cite{GH}, \cite{GN1}, \cite{Liu} and \cite{GLM}, while in \cite{Rasmussen} rational L-space surgeries on satellites by algebraic links are studied. 
Concerning foliations, in \cite{KR1} Kalelkar and Roberts construct coorientable taut foliations on some fillings of $3$-manifolds that fiber over the circle. In particular, their methods can also be applied to surgeries on fibered links.
\\

As far as we know, in this paper we provide the first example of the equivalence between conditions $1)$ and $2)$ of the conjecture for manifolds obtained via Dehn surgery on a hyperbolic link with at least $2$ components.

We focus our attention on the Whitehead link, that is depicted in Figure \ref{Whitehead link}.

\begin{figure}[H]
    \centering
    \includegraphics[width=0.35\textwidth]{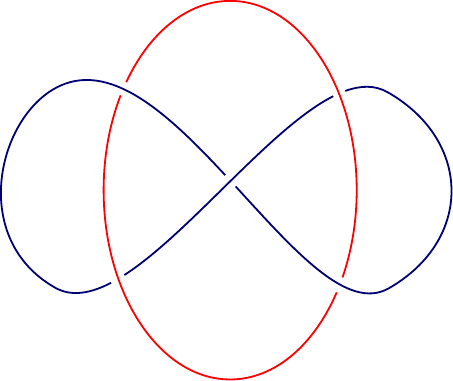}
    \caption{The Whitehead link}
    \label{Whitehead link}
\end{figure}

We will denote the Whitehead link with {\rm WL} and the $\left(\sfrac{p_1}{q_1}, \sfrac{p_2}{q_2}\right)$-surgery on the Whitehead link with $S^3_{\sfrac{p_1}{q_1}, \sfrac{p_2}{q_2}}({\rm WL})$. Notice that since the Whitehead link has linking number zero, the homology of this manifold is isomorphic to $\Z_{p_1}\oplus \Z_{p_2}$. In particular, the manifold $S^3_{\sfrac{p_1}{q_1}, \sfrac{p_2}{q_2}}({\rm WL})$ is a rational homology sphere if and only if $p_1\ne 0$ and $p_2\ne 0$.

Recall that the Whitehead link exterior supports a complete hyperbolic structure and therefore, by virtue of Thurston's Hyperbolic Dehn surgery theorem \cite{Th1}, ``most'' of its fillings are hyperbolic. 

The main result of this paper is the following:

\begin{theorem}\label{theorem}
Let $p_1, q_1$ and $p_2,q_2$ be two pairs of non vanishing coprime integers.
Let $S^3_{\sfrac{p_1}{q_1}, \sfrac{p_2}{q_2}}({\rm WL})$ be the $\left(\sfrac{p_1}{q_1}, \sfrac{p_2}{q_2}\right)$-surgery on the Whitehead link. Then
\begin{itemize}
    \item $S^3_{\sfrac{p_1}{q_1}, \sfrac{p_2}{q_2}}({\rm WL})$ is an L-space if and only if $\sfrac{p_1}{q_1}\geq 1$ and $\sfrac{p_2}{q_2}\geq 1$.
    \item $S^3_{\sfrac{p_1}{q_1}, \sfrac{p_2}{q_2}}({\rm WL})$ supports a cooriented taut foliation if and only if $\sfrac{p_1}{q_1}< 1$ or $\sfrac{p_2}{q_2}< 1$.
\end{itemize}
In particular, for all these manifolds  we have $1)\Leftrightarrow 2)$ in Conjecture \ref{L space conjecture}.
\end{theorem}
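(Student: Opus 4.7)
The plan is to prove the two halves of the theorem separately, exploiting the paper's two main general tools mentioned in the abstract: the structural result for L-space surgery slopes on links with unknotted components and vanishing pairwise linking number, and the construction of taut foliations on fillings of $k$-holed torus bundles with prescribed monodromy. Once both halves are established, the final statement $1)\Leftrightarrow 2)$ follows automatically because the Ozsv\'ath--Szab\'o/Bowden/Kazez--Roberts implication $1)\Rightarrow 2)$ already gives that a cooriented taut foliation obstructs being an L-space, so the foliation direction supplies the ``not L-space'' half for free.

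For the L-space direction, I would argue that $S^3_{p_1/q_1,p_2/q_2}({\rm WL})$ is an L-space whenever $p_1/q_1\ge 1$ and $p_2/q_2\ge 1$. The Whitehead link has unknotted components and linking number zero, so the paper's structural theorem for the L-space surgery region applies. By general upward-closure properties of L-space surgery slopes (sufficiently large slopes on an L-space link give L-space fillings) and the symmetry of the Whitehead link under an involution swapping its two components, the L-space region is monotone and symmetric in $p_1/q_1$ and $p_2/q_2$. The argument then reduces to verifying a single corner case: that the $(1,1)$-surgery is an L-space. This I would check directly, either by recognizing the resulting manifold as a small Seifert-fibered space whose Heegaard Floer homology is already known, or via the Manolescu--Ozsv\'ath surgery formula for L-space links using the symmetric $H$-function of ${\rm WL}$. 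Combined with the structural theorem, this pins down the region $\{p_1/q_1\ge 1,\ p_2/q_2\ge 1\}$ as exactly the L-space slopes.

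For the taut foliation direction, I would construct cooriented taut foliations on every surgery with $p_1/q_1<1$ or $p_2/q_2<1$. The Whitehead link is fibered with fiber a twice-punctured torus, so its exterior is a $2$-holed torus bundle over $S^1$ with an explicit pseudo-Anosov monodromy. I would apply the paper's general construction of taut foliations on fillings of $k$-holed torus bundles to this specific monodromy, producing a family of branched surfaces in the exterior that fully carry cooriented taut foliations and whose boundary slopes can be adjusted in each cusp independently. By choosing the spinning/boundary train-track parameters appropriately, one realizes every pair of boundary slopes in the open region $\{p_1/q_1<1\}\cup\{p_2/q_2<1\}$, giving the desired foliations after Dehn filling.

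The main obstacle is the second direction, specifically matching the range of boundary slopes realized by the branched surface constructions with the sharp cutoff $p_i/q_i<1$ required by the L-space statement: the construction must be flexible enough to cover slopes arbitrarily close to $1$ on each component while still not producing foliations on any slope with both $p_i/q_i\ge 1$ (which would contradict the L-space side). This requires a careful accounting of how the monodromy of the twice-punctured torus bundle interacts with the two boundary tori, and a perturbation argument near the critical slope $1$ on each component. A secondary difficulty on the L-space side is isolating the exact corner $(1,1)$ as the extremal L-space surgery rather than some other rational point; here the symmetry of ${\rm WL}$ and an explicit computation at $(1,1)$ are essential.
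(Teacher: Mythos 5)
Your L-space half is essentially the paper's argument: the structural theorem for links with unknotted components and linking number zero reduces everything to exhibiting one positive rational L-space surgery, and the $(1,1)$-surgery (the Poincar\'e homology sphere) does the job, forcing $b_1=b_2=0$ in the structural description. That part is fine.

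The gap is in the foliation half. You assert that the general theorem on fillings of $k$-holed torus bundles, applied to the monodromy $h=\tau_0\tau_1\tau_2^{-1}$ of the Whitehead fibration, realizes \emph{every} multislope in $\{s_1<1\}\cup\{s_2<1\}$. It does not. With $a_0=1>0$ the first part of that theorem yields the region $(\infty,1)\times(\infty,1)$, i.e.\ both slopes less than $1$; and since both boundary components carry the label $n$ (because $a_1=1$ and $a_2=-1$ have opposite signs), the second part yields only $(\infty,0)\times(0,\infty)\cup(0,\infty)\times(\infty,0)$, i.e.\ the slopes of opposite sign. The union of these misses the entire two-dimensional region $[1,\infty)\times(0,1)\;\cup\;(0,1)\times[1,\infty)$, where one slope is at least $1$ and the other lies strictly between $0$ and $1$. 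This cannot be recovered by re-tuning weights on the same boundary train tracks or by a perturbation near the critical slope $1$: the train tracks produced by the general construction simply do not carry those multislopes. The paper closes this hole with a separate, Whitehead-specific branched surface, built from a different pair of arcs $\alpha,\beta$ chosen so that their images under $h$ are in a standard ``horizontal'' position; one checks by hand that it has no (half) sink discs, that its boundary train tracks realize all multislopes in $(0,\infty)\times(-1,1)$, and then invokes the symmetry of the link to get $(-1,1)\times(0,\infty)$ as well. Without some such additional construction your proof of the foliation direction is incomplete. (A minor further point: for these extra foliations tautness is argued via the transversal given by the cores of the filling tori rather than by absence of compact leaves, since the relevant multislopes include ones where the compact-leaf argument is less immediate; this is a detail you would also need to address.)
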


Recall that since the Whitehead link has linking number zero, if $p_1=0$ or $p_2=0$ then $S^3_{\sfrac{p_1}{q_1}, \sfrac{p_2}{q_2}}({\rm WL})$ is not a rational homology sphere. This implies that when some of the parameters $p_1,p_2,q_1,q_2$ vanish the only rational homology spheres that can be obtained are $S^3$ and lens spaces and it is known that they satisfy the L-space conjecture.  

Also notice that the statement of Theorem \ref{theorem} is invariant under the switch $\sfrac{p_1}{q_1}\leftrightarrow \sfrac{p_2}{q_2}$. This is a consequence of the fact that the Whitehead link is symmetric, i.e. that there exists an isotopy exchanging its two components. 
\\

In the proof of the theorem we study the conditions of being an L-space and of supporting a taut foliation separately.

The key idea in the proof of the first part of the theorem is to use the results of J. Rasmussen and S.D. Rasmussen in \cite{RR} and Gorsky, Liu and Moore in \cite{GLM} to prove the following more general fact:

\begin{theorem}\label{struttura L space surgery}
Suppose that $\mathcal{L}$ is a non-trivial link in $S^3$ with two unknotted components and linking number zero. Suppose that there exist rationals $\sfrac{p_1}{q_1}> 0$ and $\sfrac{p_2}{q_2}>0$ such that $S^3_{\sfrac{p_1}{q_1},\sfrac{p_2}{q_2}}(\mathcal{L})$ is an L-space. Then there exist non-negative integer numbers $b_1,b_2$ such that 
$$
L(\mathcal{L})=\big{(}[2b_1+1,\infty]\times[2b_2+1,\infty]\big{)}\cup \big{(}\{\infty\}\times \Q^*\big{)} \cup  \big{(}\Q^*\times\{\infty\}\big{)}.
$$
\end{theorem}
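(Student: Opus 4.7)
The plan is to combine the structure theorem of Rasmussen--Rasmussen \cite{RR} on the set of L-space filling slopes for a Floer simple two-cusped manifold with the constraints on integer L-space surgeries of links with unknotted components and vanishing linking number due to Gorsky--Liu--Moore \cite{GLM}.

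First I would observe that the ``axes'' $(\{\infty\}\times\Q^*)\cup(\Q^*\times\{\infty\})$ are automatically contained in $L(\mathcal{L})$. Indeed, since each $L_i$ is unknotted, the $\infty$-filling on one cusp returns the exterior of the other component in $S^3$, which is a solid torus; any non-longitudinal filling of a solid torus is a lens space, hence an L-space. The work is therefore to identify the L-space slopes lying in $\Q\times\Q$. The linking number zero hypothesis guarantees that the Seifert longitudes give well-defined coordinates in which $0$-filling of either cusp produces a manifold with $b_1\geq 1$; in particular the $0$-axes are disjoint from $L(\mathcal{L})$, so the L-space region in $\Q\times\Q$ splits along the four sign quadrants determined by these coordinates.

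Next I would focus on the positive-positive quadrant. The hypothesis gives a positive L-space filling slope, so $X(\mathcal{L})$ is Floer simple in the sense of \cite{RR}, and the structure theorem there describes the L-space slopes on one cusp, with the other slope fixed, as a closed interval delimited by upper/lower slope functions. Combining with the integer L-space surgery constraints of \cite{GLM} specific to our class of links, I would argue that these slope functions must be \emph{locally constant} in the interior of the positive-positive quadrant, forcing the L-space region there to be a rectangular corner $[r_1,\infty]\times[r_2,\infty]$ for some positive rationals $r_1,r_2$.

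Finally I would identify each $r_i$ as an odd positive integer. Filling the first cusp at a large integer slope $n$ produces a lens space containing a null-homologous knot $K_n$ (null-homologous by the linking number zero hypothesis) admitting positive L-space surgeries. For such L-space knots in a lens space, the L-space surgery characterization (an Ozsv\'ath--Szab\'o-type criterion, repackaged in \cite{GLM}) forces the smallest integer L-space surgery slope to equal $2g_n-1$ for a genus-type invariant $g_n\in \Z_{\geq 1}$. Using the rectangular shape from the previous step together with the upper semicontinuity furnished by \cite{RR}, letting $n\to\infty$ gives $r_2=2b_2+1$ with $b_2\in\Z_{\geq 0}$; the symmetric argument swapping the two cusps yields $r_1=2b_1+1$. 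The main obstacle is the middle step: promoting the Rasmussen--Rasmussen slope-function description of $L(\mathcal{L})$ to a genuine product ``rectangular corner'' in the positive-positive quadrant, which requires exploiting the rigidity coming from the unknottedness and linking-number-zero hypotheses via \cite{GLM}. Once this rectangular shape is in hand, the odd-integer identification of the thresholds follows from the L-space knot surgery characterization in each lens space filling.
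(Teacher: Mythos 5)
Your overall strategy (combine the Rasmussen--Rasmussen interval structure with the Gorsky--Liu--Moore integer surgery theorem) is the same as the paper's, and your preliminary observations about the $\infty$-axes and the $0$-axes are correct. But the step you yourself flag as ``the main obstacle'' --- promoting the slice-by-slice interval description to a rectangular corner --- is precisely where the content of the proof lives, and you do not supply the argument; you only assert that the slope functions ``must be locally constant.'' The mechanism the paper uses is the following integrality phenomenon. Fix one slope $\sfrac{p}{q}$ and let $M=S^3_{\bullet,\sfrac{p}{q}}(\mathcal{L})$. Because the linking number is zero, $H_1(M;\Z)\cong\Z\oplus\Z_p$ with $i(H_1(\partial M;\Z))=\Z\times\{0\}$, so the set $D^{\tau}_{>0}(M)$ lies in $\Z_{>0}\times\{0\}$ and the RR theorem forces $L(M)$ to be $\overline{\Q}\setminus\{0\}$, or $[k,\infty]$, or $[\infty,-k]$ with $k$ a positive \emph{integer} (the endpoint adjacent to $\infty$, which lies in $L(M)$ since the components are unknotted, is necessarily of the form $\pm n_h/1$). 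This integrality does two jobs that your sketch needs but does not perform: first, it lets one bootstrap from the single hypothesized positive rational L-space slope to the existence of arbitrarily large integer L-space surgeries, so that $\mathcal{L}$ is an L-space link and GLM applies at all (your proposal never verifies the L-space link hypothesis of GLM); second, it converts any hypothetical rational L-space slope outside the claimed region into an integer L-space slope outside $(2b_1,\infty)\times(2b_2,\infty)$, contradicting GLM. Alternating this argument between the two cusps yields the rectangle directly, with no need for a separate ``local constancy'' claim.

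Your final step is also problematic. You propose to identify the thresholds by viewing the second component inside the lens space obtained from a large integer filling of the first cusp and invoking a ``$2g-1$'' characterization of L-space surgeries for null-homologous knots in lens spaces, then taking a limit $n\to\infty$. No such off-the-shelf characterization is available in that generality, and the limiting/semicontinuity argument is not justified. It is also unnecessary: the GLM theorem for non-trivial L-space links with unknotted components and linking number zero already states that the integer L-space surgeries are exactly $\{p_1>2b_1\}\times\{p_2>2b_2\}$, which hands you the odd thresholds $2b_i+1$ with no further work.
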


In the previous statement the symbol $L(\mathcal{L})$ denotes the set of the \emph{L-space filling slopes} of the exterior of $\mathcal{L}$. This is the set of slopes such that filling the exterior of $\mathcal{L}$ with such slopes yields L-spaces.

\begin{remark}
We will not make use of this fact, but the integers $b_1$ and $b_2$ can be explicitely computed with the $H$ function associated to $\mathcal{L}$, see \cite{GLM}. 
\end{remark}

The foliations, on the other hand, are obtained by constructing branched surfaces without sink discs, using the results of Li (\cite{L},\cite{L2}) and inspired by the works of Roberts and Kalelkar-Roberts (\cite{R},\cite{R1},\cite{KR1}).
Also in this case, part of the proof of Theorem \ref{theorem} will follow from a more general result, that allows to construct taut foliations on some fillings of manifolds that fiber over the circle with fiber a $k$-holed torus and some particular type of monodromy. This is the content of Theorem \ref{teorema foliazioni}, which seems to be interesting in itself. 
\newline

We are also able to determine which of the taut foliations of Theorem \ref{theorem} have zero Euler class, by adapting the ideas of Hu in \cite{H} to our case. This implies that the manifolds supporting such taut foliations have left orderable first fundamental group.

More precisely we have:
\begin{theorem}\label{theorem euler}
Let $S^3_{\sfrac{p_1}{q_1}, \sfrac{p_2}{q_2}}({\rm WL})$ be the $\left(\sfrac{p_1}{q_1}, \sfrac{p_2}{q_2}\right)$-surgery on the Whitehead link, with $q_1,q_2\ne 0$ and $p_1,p_2>0$.

Then the foliations constructed in the proof of the Theorem \ref{theorem} have vanishing Euler class if and only if $|q_i|\equiv 1 \pmod{p_i}$ for $i=1,2$. 

In particular, for all these manifolds the L-space conjecture holds.
\end{theorem}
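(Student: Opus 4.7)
The strategy is to adapt Hu's Euler class computation from \cite{H} from the knot setting to the two-component Whitehead link setting. I would first recall the branched surface $B_{\epsilon_1,\epsilon_2} \subset M_{\text{WL}}$ constructed in the proof of Theorem \ref{theorem} whose carried lamination extends to the taut foliation $\mathcal{F}_{p_1/q_1, p_2/q_2}$ after Dehn filling, where the sign $\epsilon_i \in \{+,-\}$ corresponds to the sign of $q_i$ (owing to the symmetry of the Whitehead link, the cases $q_i>0$ and $q_i<0$ are handled by mirrored branched surfaces). Fixing a non-vanishing section of the tangent plane field of $B_{\epsilon_1,\epsilon_2}$ on each boundary torus defines a relative Euler class $e(B_{\epsilon_1,\epsilon_2}) \in H^2(M_{\text{WL}}, \partial M_{\text{WL}}; \mathbb{Z})$.

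Next, I would compute the obstruction to extending this section across each filling solid torus. Since the Whitehead link has linking number zero, $H^2(S^3_{p_1/q_1, p_2/q_2}(\text{WL}); \mathbb{Z}) \cong \mathbb{Z}_{p_1} \oplus \mathbb{Z}_{p_2}$ and the Euler class $e(\mathcal{F}_{p_1/q_1, p_2/q_2})$ splits as a pair, with the $i$-th coordinate depending only on $(p_i, q_i, \epsilon_i)$. The boundary train track $B_{\epsilon_1,\epsilon_2} \cap T_i$ determines the slope along which the tangent section rotates, and the obstruction to extending it over the meridian disk $D_i$ of the filling is the algebraic intersection number of the filling slope with this boundary slope. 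A careful local analysis of the branched surface near $T_i$ should yield a formula of the form $e(\mathcal{F}_{p_1/q_1, p_2/q_2})_i \equiv q_i - \delta_i \pmod{p_i}$, with $\delta_i \in \{\pm 1\}$ synchronized with $\epsilon_i$, so that vanishing of the Euler class in both summands is equivalent to $|q_i| \equiv 1 \pmod{p_i}$ for $i=1,2$.

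The main technical obstacle is the explicit identification of the boundary train tracks $B_{\epsilon_1,\epsilon_2} \cap T_i$ and the sign conventions governing the chosen tangent section: one must follow the branched surfaces of Theorem \ref{theorem} case by case, carefully tracking how branch directions interact with the coorientation in a neighborhood of each boundary torus. This bookkeeping is delicate but essentially mechanical once the branched surfaces of Theorem \ref{theorem} are fixed, and is the heart of the argument.

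Finally, the left orderability of $\pi_1(S^3_{p_1/q_1, p_2/q_2}(\text{WL}))$ when $e(\mathcal{F})=0$ follows from the standard principle that an irreducible rational homology 3-sphere carrying a coorientable taut foliation with vanishing Euler class has left-orderable fundamental group: the universal circle action of the foliation is faithful (our manifolds are hyperbolic, hence irreducible and aspherical with non-trivial $\pi_1$), and the vanishing Euler class lifts this action to a faithful action of $\pi_1$ on $\mathbb{R}$, producing the desired left order and completing the proof of the L-space conjecture for these manifolds.
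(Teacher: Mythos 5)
Your overall architecture matches the paper's: adapt Hu's relative Euler class machinery, compute the obstruction to extending a boundary section over the two filling solid tori via a Mayer--Vietoris/long-exact-sequence argument, and conclude left orderability from the vanishing of $e(T\hat{\mathcal{F}})$ via the lifting criterion of Boyer--Hu. The paper's general criterion (its Proposition 4.5) is exactly of the shape you predict: $e(T\hat{\mathcal{F}})=0$ iff $e(T\mathcal{F})=0$ and $a_i q_i\equiv b_i \pmod{p_i}$, where $b_i=\langle e_{\sigma_i}(T\mathcal{D}_i),[D_i]\rangle=\mp 1$ according to the sign of $q_i$ and $a_i=\langle e_{\sigma}(T\mathcal{F}),[S_i]\rangle$ with $S_i$ the once-punctured-torus Seifert surface of the $i$-th component.

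The genuine gap is in your claim that ``a careful local analysis of the branched surface near $T_i$'' is enough to produce the congruence $q_i-\delta_i\equiv 0\pmod{p_i}$, and that the rest is mechanical bookkeeping. The boundary train tracks determine only the $b_i$ (the solid-torus side of the obstruction). The coefficient multiplying $q_i$ is $a_i$, the evaluation of the relative Euler class of $\mathcal{F}$ on the class in $H_2(M,\partial M;\Z)$ dual to the $i$-th meridian; this is a global invariant of the foliation on the link exterior and is not visible near $\partial M$. The paper computes it by a two-step global argument: first, since the fiber surface $S$ meets each meridian once, $[S]=[S_1]+[S_2]$, so $a_1+a_2=\langle e_{\sigma}(T\mathcal{F}),[S]\rangle=\chi(S)=-2$; second, Thurston's inequality $|\langle e_{\sigma}(T\mathcal{F}),[F]\rangle|\leq|\chi(F)|$ applied to the one-holed tori $S_i$ forces $|a_i|\leq 1$, hence $a_1=a_2=-1$. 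Without this input your congruence has an undetermined coefficient in front of $q_i$ and the equivalence with $|q_i|\equiv 1\pmod{p_i}$ does not follow; you should add the Seifert surfaces $S_1,S_2$, the identity $a_1+a_2=\chi(S)$, and the Thurston norm estimate to close the argument. (You also need the preliminary observation that $e(T\mathcal{F})=0$ on the exterior itself, which the paper gets by restricting to a boundary torus where $T\mathcal{F}$ has a nowhere vanishing section.)
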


Moveover, Theorem \ref{theorem}, together with some results proved in \cite{RSS}, implies the following

\begin{theorem}\label{Z+RS}
All the rational homology spheres obtained by integer surgery on {\rm WL} satisfy the L-space conjecture.
\end{theorem}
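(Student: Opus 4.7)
The plan is to combine the equivalence $1)\Leftrightarrow 2)$ of Conjecture \ref{L space conjecture} given by Theorem \ref{theorem} with left-orderability statements for integer surgeries on $\mathrm{WL}$ imported from \cite{RSS}, through a short case analysis.

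First, I would apply Theorem \ref{theorem} to every integer surgery $S^3_{p_1,p_2}(\mathrm{WL})$ with $p_1 p_2 \ne 0$ (the condition that guarantees a rational homology sphere): such a manifold is an L-space iff $p_1\geq 1$ and $p_2\geq 1$, and otherwise supports a cooriented taut foliation. So $1)\Leftrightarrow 2)$ already holds throughout this family, and what is left to establish is condition $3)$ of Conjecture \ref{L space conjecture} in the appropriate form. Since $\mathrm{WL}$ is amphichiral, $S^3_{-p_1,-p_2}(\mathrm{WL})$ is homeomorphic to $S^3_{p_1,p_2}(\mathrm{WL})$ with reversed orientation; as being an L-space and left-orderability of $\pi_1$ are both orientation-independent, I may assume $p_1\geq 1$ without loss of generality.

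Next, I would split into two subcases. In the L-space range $p_1\geq 1,\ p_2\geq 1$, the conjecture requires non-left-orderability of $\pi_1\bigl(S^3_{p_1,p_2}(\mathrm{WL})\bigr)$, which I would quote directly from \cite{RSS}. In the non-L-space range $p_1\geq 1,\ p_2\leq -1$, the conjecture requires left-orderability of $\pi_1\bigl(S^3_{p_1,p_2}(\mathrm{WL})\bigr)$, which I would again import from \cite{RSS}. Note that Theorem \ref{theorem euler} is not directly usable here: it assumes both $p_i>0$, a range that for \emph{integer} slopes lies entirely inside the L-space regime, where no foliation has been constructed.

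The main obstacle is a bookkeeping one: I must check that the specific families of integer surgery slopes for which \cite{RSS} establishes left-orderability and non-left-orderability, taken together with the amphichiral reduction, exhaust every integer surgery on $\mathrm{WL}$ producing a rational homology sphere. Once this case coverage is verified, combining it with Theorem \ref{theorem} yields $1)\Leftrightarrow 2)\Leftrightarrow 3)$ and hence Theorem \ref{Z+RS}.
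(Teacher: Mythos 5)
Your reduction ``since $\mathrm{WL}$ is amphichiral, $S^3_{-p_1,-p_2}(\mathrm{WL})\cong -S^3_{p_1,p_2}(\mathrm{WL})$, so WLOG $p_1\geq 1$'' is false, and this can be seen from Theorem \ref{theorem} itself: $S^3_{1,1}(\mathrm{WL})$ is an L-space (the Poincar\'e sphere) while $S^3_{-1,-1}(\mathrm{WL})$ is not, and being an L-space is orientation-independent, so these two manifolds are not orientation-reversing homeomorphic. Concretely, the slope involution induced by any symmetry of the pair $(S^3,\mathrm{WL})$ does not act by $(p_1,p_2)\mapsto(-p_1,-p_2)$; indeed $S^3_{1,\bullet}(\mathrm{WL})$ is the trefoil exterior whereas $S^3_{-1,\bullet}(\mathrm{WL})$ is the figure-eight exterior. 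The only symmetry you may use is the component exchange $\sfrac{p_1}{q_1}\leftrightarrow\sfrac{p_2}{q_2}$. Without the false reduction you must treat three genuinely distinct regions: $p_1,p_2\geq 1$; mixed signs; and $p_1,p_2\leq -1$.

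The second gap is the source of left-orderability in the non-L-space range. The paper \cite{RSS} supplies only the \emph{non}-orderability statements, and even there only for $m\geq 3$ with $\sfrac{p}{q}\geq 1$ (the case $m=2$ requires re-examining their lemmas, and $m=1$ is not covered at all --- the paper handles it separately via Moser's classification of trefoil surgeries, since $S^3_{1,\bullet}(\mathrm{WL})$ is the trefoil exterior). Left-orderability when some $p_i\leq -1$ is \emph{not} in \cite{RSS}: the paper obtains it from \cite{Z}, using that $S^3_{m,\bullet}(\mathrm{WL})$ for $m\leq -1$ is a once-punctured torus bundle whose monodromy extends to an Anosov diffeomorphism of the torus preserving the orientations of its invariant foliations, so that all non-trivial fillings have left-orderable fundamental group. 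So the ``bookkeeping'' you defer is exactly where the content lies, and as planned it does not close: the cases $p_1\geq 1,\,p_2\leq-1$ and $p_1,p_2\leq -1$ have no argument once the amphichiral reduction and the misattributed citation are removed. Your use of Theorem \ref{theorem} for the equivalence $1)\Leftrightarrow 2)$ and your observation that Theorem \ref{theorem euler} is unusable for integer slopes are both correct.
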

We refer to the last section of this paper for a more detailed statement of Theorem \ref{Z+RS}, which also combines results from \cite{Z}. 
\\

\textbf{Structure of the paper.} The paper is organised as follows. In Section $2$ we recall the result of \cite{RR} and we prove Theorem \ref{struttura L space surgery} and the first part of Theorem \ref{theorem}.
In Section $3$ we focus our attention on taut foliations. In Subsection $3.1$ we recall some basic notions on branched surfaces and the main result of \cite{L2}. In Subsection $3.2$ we prove Theorem \ref{teorema foliazioni} and start the proof of the second part of Theorem \ref{theorem}, that is concluded in Subsection $3.3$. In the last section we prove Theorem \ref{theorem euler} and collect from \cite{D}, \cite{RSS} and \cite{Z} some other results about orderability, and non-orderability, of some surgeries on the Whitehead link.
\\

\textbf{Acknowledgments.} I warmly thank my advisors Bruno Martelli and Paolo Lisca for having presented this problem to me, for their support and for their useful comments on this paper. I also thank Alice Merz and Ludovico Battista for the several helpful discussions. I also thank the referee for the many valuable comments and suggestions.

\section{L-spaces}

In this section we prove the first part of Theorem \ref{theorem}. 
We start by recalling some definitions and the main result of \cite{RR}.
Let $Y$ be a rational homology solid torus, i.e. $Y$ is a compact oriented $3$-manifold with toroidal boundary such that $H_*(Y;\mathbb{Q})\cong H_*(\D^2\times S^1; \Q)$.

We are interested in the study of the Dehn fillings on $Y$. We define the \textbf{set of slopes in Y} as
$$
Sl(Y)=\{\alpha\in H_1(\partial Y;\Z)|\, \alpha \text{ is primitive}\}/\pm 1.
$$
It is a well known fact that each element  $[\alpha] \in Sl(Y)$ determines a Dehn filling on $Y$, that we will denote with $Y(\alpha)$. 

Notice that since $Y$ is a rational homology solid torus, there is a distinguished slope in $Sl(Y)$ that we call the \emph{homological longitude} of $Y$ and that is defined in the following way. We denote with $i: H_1(\partial Y;\Z)\rightarrow H_1(Y;\Z)$ the map induced by the inclusion $\partial Y\subset Y$ and we consider a primitive element $l\in H_1(\partial Y;\Z)$ such that $i(l)$ is torsion in $H_1(Y;\Z)$. The element $l$ is unique up to sign, and its equivalence class $[l]\in Sl(Y)$ is the homological longitude of $Y$. This definition, that may seem to be counterintuitive, is given so that when $Y$ is the complement of a knot in $S^3$, the homological longitude of $Y$ coincides with the slope defined by the longitude the knot.
\newline

We want to study the fillings on $Y$ that are L-spaces. 
For this reason we define the set of the \textbf{L-space filling slopes}:
$$
L(Y)=\{[\alpha]\in Sl(Y)|\,\, Y(\alpha)\text{ is an L-space}\}
$$

and we say that $Y$ is \textbf{Floer simple} if $Y$ admits multiple L-space filling slopes, i.e. if $|L(Y)|>1$.
\newline

It turns out that if $Y$ is Floer simple then the set $L(Y)$ has a simple structure, and this can be computed by knowing the \textbf{Turaev torsion} of $Y$. We only recall some properties of the Turaev torsion and we refer the reader to \cite{T} for the precise definitions. 

Fix an identification $H_1(Y;\Z)=\Z \oplus T$, where $T$ is the torsion subgroup, and denote by $\phi: H_1(Y;\Z)\rightarrow \Z$ the projection induced by this identification. Then the Turaev torsion of $Y$ can be normalised to be written as a formal sum
$$
\tau(Y)=\sum_{\substack{h\in H_1(Y;\Z) \\ \phi(h)\geq 0}}a_h h
$$
where $a_h$ is an integer for each $h$, $a_0\ne 0$ and $a_h=1$ for all but finitely many $h$.
For example (see \cite[Section II.5]{T}) if $H_1(Y;\Z)=\Z$ the Turaev torsion of $Y$ can be written as
$$
\tau(Y)=\frac{\Delta(Y)}{1-t}\in \Z[[t]]
$$
 where $(1-t)^{-1}$ is expanded as an infinite sum in positive powers of $t$ and $\Delta(Y)$ is the Alexander polynomial of $Y$  normalised so that $\Delta(Y)\in \Z[t]$, $\Delta(Y)(0)\ne 0$ and $\Delta(Y)(1)= 1$.
 In fact, in this case the coefficients of $\tau(Y)$ are eventually constant and equal to the sum of all the coefficients of $\Delta(Y)$, and this value is exactly $\Delta(Y)(1)=1$.
\\

We define $S[\tau(Y)]=\{h\in H_1(Y;\Z)|\,\, a_h\ne 0\}$ to be the \textbf{support of $\tau(Y)$}.

We also define the following subset of $H_1(Y;\Z)$:
$$
D_{>0}^\tau(Y)=\big{\{}x-y\,|\, x\notin S[\tau(Y)], y\in S[\tau(Y)]\text{ and } \phi(x)>\phi(y) \big{\}}\cap i(H_1(\partial Y; \Z))
$$
where $i:H_1(\partial Y;\Z)\rightarrow H_1(Y;\Z)$ is induced from the inclusion.
\begin{lemma}\label{D_tau è finito}
 The set $D^{\tau}_{>0}$ is always finite.
\begin{proof} 
Recall that we have fixed an identification $H_1(Y, \Z)=\Z\oplus T$, where $T$ is the torsion subgroup, and we have denoted by $\phi:H_1(Y,\Z)\rightarrow \Z$ the projection induced by this identification. Also recall that the Turaev torsion of $Y$ is normalised so to be written as 
$$
\tau(Y)=\sum_{\substack{h\in H_1(Y;\Z) \\ \phi(h)\geq 0}}a_h h
$$
where $a_h$ is an integer for each $h$, $a_0\ne 0$ and $a_h=1$ for all but finitely many $h$. This implies in particular that if $h\in S[\tau(Y)]$ then $\phi(h)\geq 0$. Moreover since $a_h=1$ for all but finitely many $h$ with $\phi(h)\geq 0$ we also deduce that there exists a positive constant $c\in \Z$ such that if $h'\notin S[\tau(Y)]$ then $\phi(h')\leq c$.

We now prove that $D^{\tau}_{>0}$ is finite. To do this, we define for each $x\notin S[\tau(Y)]$ the set
$$
\mathcal{S}^{\tau}_x=\{y\in S[\tau(Y)]\,|\, \phi(x)>\phi(y)\}.
$$
We show that $\mathcal{S}_x^{\tau}$ is always finite and that it is non-empty only for finitely many $x\notin S[\tau(Y)]$. It follows from the definition of $D_{>0}^{\tau}(Y)$ that this implies that $D_{>0}^{\tau}(Y)$ is finite.
We fix $x\notin S[\tau(Y)]$ and we have two cases:
 \begin{itemize}
     \item \emph{$\phi(x)\leq 0$:} in this case, since all the $y\in S[\tau(Y)]$ satisfy $\phi(y)\geq 0$, we have that $\mathcal{S}_x^{\tau}$ is empty.
     \item \emph{$\phi(x)>0$:} we use again the fact that all the $y\in S[\tau(Y)]$ satisfy $\phi(y)\geq 0$ to deduce that
     $$
     \mathcal{S}_x^{\tau}\subset \{0,1,\dots, \phi(x)-1\}\oplus T\subset \Z\oplus T=H_1(Y,\Z).
     $$
     Since the torsion subgroup $T$ is finite we have that $S^\tau_x$ is finite.
 \end{itemize}
 To conclude the proof we show that the latter case occurs only for finitely many $x\notin S[\tau(Y)]$. In fact since there exists a positive constant $c\in \Z$ such that if $x\notin S[\tau(Y)]$ then $\phi(x)\leq c$ we have that the set $\{x\notin S[\tau(Y)]\,|\,\phi(x)>0\}$ is contained in $\{0,1,\dots, c\}\oplus T$, and this is a finite set.
 \end{proof}
\end{lemma}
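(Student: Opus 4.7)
The plan is to combine two finiteness observations coming directly from the normalization of $\tau(Y)$: the support misses only finitely many elements with non-negative $\phi$, and the torsion subgroup $T$ is finite.

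First I would record the following two consequences of the normalization. Since $\tau(Y) = \sum_{\phi(h) \geq 0} a_h h$, any $y \in S[\tau(Y)]$ satisfies $\phi(y) \geq 0$. Since $a_h = 1$ for all but finitely many $h$ with $\phi(h) \geq 0$, only finitely many $h$ in the upper half-space $\{\phi \geq 0\}$ can have $a_h = 0$; consequently there is a constant $c \in \Z_{>0}$ such that every $x \notin S[\tau(Y)]$ with $\phi(x) \geq 0$ satisfies $\phi(x) \leq c$, and moreover the set of such $x$ is finite.

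Next I would peel off the sign of $\phi(x)$. An element $x - y \in D_{>0}^\tau(Y)$ requires $y \in S[\tau(Y)]$ and $\phi(x) > \phi(y) \geq 0$, so in particular $\phi(x) > 0$. Thus the relevant $x$'s lie in the finite set
\[
F := \{ x \notin S[\tau(Y)] : 0 < \phi(x) \leq c \}.
\]
For each fixed $x \in F$, the candidate $y$'s satisfy $0 \leq \phi(y) < \phi(x)$, hence
\[
y \in \{0, 1, \dots, \phi(x) - 1\} \oplus T \subset \Z \oplus T = H_1(Y;\Z),
\]
which is finite because $T$ is finite.

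Finally, $D_{>0}^\tau(Y)$ is contained in the finite union $\bigcup_{x \in F} \{x - y : y \in \mathcal{S}_x^\tau\}$, and intersecting with $i(H_1(\partial Y; \Z))$ cannot enlarge it. Therefore $D_{>0}^\tau(Y)$ is finite. The argument is essentially elementary once the two finiteness inputs are isolated; there is no real obstacle, just bookkeeping of the signs of $\phi$ to confine both $x$ and $y$ to a bounded strip in the $\Z$-direction.
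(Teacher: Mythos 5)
Your proof is correct and follows essentially the same route as the paper's: both arguments confine the relevant $x$'s to a finite set using the bound $\phi(x)\leq c$ coming from the normalization of $\tau(Y)$, and confine the corresponding $y$'s to the finite strip $\{0,\dots,\phi(x)-1\}\oplus T$ using $\phi(y)\geq 0$ and the finiteness of $T$. The only (immaterial) difference is organizational — you first restrict $x$ and then $y$, while the paper first shows each $\mathcal{S}_x^{\tau}$ is finite and then shows it is nonempty for only finitely many $x$.
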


We are now ready to state the main result of \cite{RR} :

\begin{theorem}(\cite{RR}) \label{theorem RR}
If $Y$ is Floer simple, then either
\begin{itemize}
    \item $D_{>0}^\tau(Y)=\emptyset$ and $L(Y)=Sl(Y)\setminus [l]$, or
    \item $D_{>0}^\tau(Y)\ne\emptyset$ and $L(Y)$ is a closed interval whose endpoints are consecutive elements in $i^{-1}(D_{>0}^\tau(Y))$.
\end{itemize}
\end{theorem}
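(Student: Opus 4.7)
The natural framework for proving such a structural statement about L-space fillings is bordered Heegaard Floer homology, and that is the approach I would take. The plan is to exploit that Floer simplicity imposes a strong constraint on the bordered invariant $\widehat{CFD}(Y)$ over the torus algebra, and then to read the L-space slopes off combinatorially. First I would recall that for any rational homology solid torus $Y$ the rank of $\widehat{HF}(Y(\alpha))$ is bounded below by $|H_1(Y(\alpha);\Z)|$, and that Floer simplicity is equivalent to $\widehat{CFD}(Y)$ admitting a particularly simple decomposition in the Hanselman--Rasmussen--Watson loop calculus: a direct sum of ``arc'' type pieces together with one distinguished piece carrying the homological longitude $[l]$.

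Next I would analyze the pairing $\widehat{CFD}(Y) \boxtimes \widehat{CFA}(\text{solid torus with slope } \alpha)$. Each arc piece in the decomposition contributes an L-space pairing if and only if $\alpha$ avoids a specific ``transition slope'' determined by the endpoints of that arc in the parametrization of $\partial Y$. The global L-space condition is the conjunction of these constraints. When no arcs are present (only the distinguished piece), only the homological longitude fails, yielding $L(Y) = Sl(Y)\setminus [l]$. When arcs are present, the forbidden slopes partition $Sl(Y)$, and the L-space slopes form the closed interval whose endpoints are the two transition slopes closest to the homological longitude on either side.

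The remaining step, which I expect to be the main obstacle, is the dictionary between the bordered combinatorial data (arc endpoints and directions) and the Turaev torsion set $D^\tau_{>0}(Y)$. I would use the theorem of Rustamov (refining Turaev) that $\tau(Y)$ equals the Euler characteristic of $\widehat{HF}(Y)$ graded over $\Spinc(Y)$, so that $S[\tau(Y)]$ records precisely the Spin$^c$ structures in which $\widehat{CFD}(Y)$ has a generator. A careful tracking of the Alexander/Spin$^c$ grading through the loop decomposition should then show that each arc endpoint is the image of some difference $x - y$ with $y \in S[\tau(Y)]$, $x \notin S[\tau(Y)]$, and $\phi(x) > \phi(y)$ (the $\phi$-inequality enforcing the correct direction of the arc), and conversely that every such boundary-realisable difference arises from an arc. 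Intersecting with $i(H_1(\partial Y;\Z))$ keeps exactly the slopes realisable on $\partial Y$, which matches the definition of $D^\tau_{>0}(Y)$. The subtlety is verifying that consecutive elements of $i^{-1}(D^\tau_{>0}(Y))$ bound the interval with no interior non-L-space slopes, which in the bordered model reduces to showing that adjacent arcs in the loop decomposition cannot cancel each other's transition slopes; this in turn follows from the positivity encoded by the condition $\phi(x) > \phi(y)$ together with the fact that Floer simplicity forbids higher-multiplicity cancellations.
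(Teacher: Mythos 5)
This theorem is not proved in the paper at all: it is imported verbatim from \cite{RR} (Rasmussen--Rasmussen), and the author only \emph{uses} it (via Lemma \ref{lemma intervallo}) rather than reproving it. So there is no in-paper argument to compare yours against; the question is whether your sketch would stand on its own as a proof of the cited result, and as written it would not. The two pivotal assertions --- (i) that Floer simplicity is equivalent to a decomposition of $\widehat{CFD}(Y)$ into ``arc'' pieces plus one distinguished piece, each arc contributing a single forbidden ``transition slope'', and (ii) that the set of transition slopes coincides with the image of $i^{-1}(D^\tau_{>0}(Y))$ --- are precisely the content of the theorem, and both are deferred (``a careful tracking \dots should then show'', ``I expect to be the main obstacle''). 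Until those are carried out, the proposal is a plausible roadmap in the style of the Hanselman--Rasmussen--Watson loop calculus (or its immersed-curve reformulation), not a proof; note also that \cite{RR} itself argues largely through the Turaev torsion as a graded Euler characteristic of Heegaard Floer homology together with surgery/gluing results, so even the strategy you outline is closer to the later bordered reproofs than to the original.

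Two smaller but substantive inaccuracies: the Euler-characteristic identification of $\tau(Y)$ with $\widehat{HF}$ that you invoke is due to Ozsv\'ath--Szab\'o (for $b_1\geq 1$), not Rustamov, whose theorem concerns rational homology spheres and the Casson--Walker invariant; and your description of $L(Y)$ as ``the closed interval whose endpoints are the two transition slopes closest to the homological longitude on either side'' is not what the theorem asserts --- that interval would contain $[l]$, which is never an L-space slope. The correct statement is that $L(Y)$ is one of the complementary intervals determined by consecutive elements of $i^{-1}(D^\tau_{>0}(Y))$, and identifying \emph{which} one requires an additional argument that your sketch does not supply.
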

We explain more precisely the second part of the statement of this theorem. Once we fix a basis $(\mu,\lambda)$ for $H_1(\partial Y; \Z)$ we can associate to each element $a\mu+b\lambda\in H_1(\partial Y; \Z)$ the element $\sfrac{a}{b}\in \overline{\Q}=\Q\cup \{\infty\} \subset S^1$. This association defines a map onto $\overline{\Q}$ that yields an identification between $Sl(Y)$ and $\overline{\Q}$.

If the set $D^\tau_{>0}$ is not empty, then we can apply this map to the set $i^{-1}(D^\tau_{>0})\subset H_1(\partial Y; \Z)$ and Theorem \ref{theorem RR} states that if $Y$ is Floer simple then $L(Y)$ is a closed interval in $Sl(Y)=\overline{\Q}$ whose endpoints are consecutive elements in the image of $i^{-1}(D^\tau_{>0})$ in $\overline{\Q}$.
\newline

In the case of our interest we consider a link $\mathcal{L}=K_1\sqcup K_2\subset S^3$ with the following properties:
\begin{itemize}
    \item $\mathcal{L}$ has two unknotted components;
    \item $\mathcal{L}$ has linking number zero.
\end{itemize}
By analogy with the definitions given for rational homology solid tori we denote with $Sl(\mathcal{L})$ the set of slopes of the exterior of $\mathcal{L}$ and with $L(\mathcal{L})$ the set of L-space filling slopes of the exterior of $\mathcal{L}$.

We fix an orientation of the components of $\mathcal{L}$ and in this way we obtain canonical meridian-longitude bases $(\mu_i,\lambda_i)_{i=1,2}$ of the first homology groups of the boundary tori of its exterior. The choice of these bases also determines an identification $Sl(\mathcal{L})=\overline{\Q}\times \overline{\Q}$.

We denote with $S^3_{\sfrac{p_1}{q_1},\bullet}(\mathcal{L})$ the manifold obtained by drilling the second component of $\mathcal{L}$ and by performing $\left(\sfrac{p_1}{q_1}\right)$-surgery
on the first. Analogously we denote with $S^3_{\bullet,\sfrac{p_2}{q_2}}(\mathcal{L})$ the manifold obtained by drilling the first component of $\mathcal{L}$ and by performing $\left(\sfrac{p_2}{q_2}\right)$-surgery on the second.
When it will not be important which component we are drilling and on which component we are surgering, we will simply use the symbol $S^3_{\sfrac{p}{q}}(\mathcal{L})$.

Notice that since $\mathcal{L}$ has linking number zero, we have an isomorphism 
$$
H_1\left(S^3_{\sfrac{p_1}{q_1},\bullet}(\mathcal{L});\Z\right)\cong\Z_{p_1} \oplus \Z
$$
where the image of the meridian $\mu_1$ in $H_1\Big{(}S^3_{\sfrac{p_1}{q_1},\bullet}(\mathcal{L});\Z\Big{)}$ is mapped to $(1,0)$ and the image of the meridian $\mu_2$ in $H_1\Big{(}S^3_{\sfrac{p_1}{q_1},\bullet}(\mathcal{L});\Z\Big{)}$ is mapped to $(0,1)$. An analogous result holds for $S^3_{\bullet,\sfrac{p_2}{q_2}}(\mathcal{L})$.

\begin{lemma}\label{lemma intervallo}
Fix $p\ne0$ and $q$ coprime integers. Suppose that $S^3_{\sfrac{p}{q}}(\mathcal{L})$ is Floer simple. Then the set $L\left(S^3_{\sfrac{p}{q}}(\mathcal{L})\right)$ has one of the following forms:
\begin{itemize}
    \item $L\left(S^3_{\sfrac{p}{q}}(\mathcal{L})\right)=\overline{\Q}\setminus{\{0\}}$, or
    \item there exists a natural number $k>0$ such that either $L\left(S^3_{\sfrac{p}{q}}(\mathcal{L})\right)=[k,\infty]$ or $L\left(S^3_{\sfrac{p}{q}}(\mathcal{L})\right)=[\infty,-k]$.
\end{itemize}
\end{lemma}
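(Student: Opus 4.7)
The plan is to apply Theorem \ref{theorem RR} to the rational homology solid torus $Y=S^3_{\sfrac{p}{q}}(\mathcal{L})$. First I would compute the homological longitude of $Y$. Since $\mathcal{L}$ has linking number zero and the surgered component is unknotted, one gets $H_1(Y;\Z)\cong \Z_{|p|}\oplus\Z$; moreover, letting $(\mu,\lambda)$ be the meridian--longitude basis of $\partial Y$ coming from the drilled component, one finds $\mu\mapsto(0,1)$ and $\lambda\mapsto(0,0)$ (the latter because $\lambda$ is already null-homologous in the complement of the surgered unknot, by the vanishing of the linking number). Hence the homological longitude $[l]$ corresponds to the slope $0\in\overline{\Q}$, and the first case of Theorem \ref{theorem RR} immediately gives $L(Y)=\overline{\Q}\setminus\{0\}$, matching the first alternative of the lemma.

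Assume now $D_{>0}^{\tau}(Y)\ne\emptyset$, so that Theorem \ref{theorem RR} produces a closed arc $L(Y)\subset\overline{\Q}$ whose endpoints are consecutive in the image of $i^{-1}(D_{>0}^{\tau}(Y))$. Two observations locate this arc. Filling $Y$ with slope $\infty$ caps the drilled component back in along its meridian, recovering $S^3_{\sfrac{p}{q}}(K)=L(p,q)$ where $K$ is the unknotted surgered component; this is an L-space, so $\infty\in L(Y)$. Filling along $[l]$ yields a manifold with positive first Betti number, so $0\notin L(Y)$. Using $i(a\mu+b\lambda)=(0,a)$, the image of $i^{-1}(D_{>0}^{\tau}(Y))$ in $\overline{\Q}$ is
$$
A=\{n/b\,:\,n\in N,\ b\in\Z,\ \gcd(n,b)=1\},
$$
where $N:=\{n>0\,:\,(0,n)\in D_{>0}^{\tau}(Y)\}$ is finite (by Lemma \ref{D_tau è finito}) and, by assumption, nonempty.

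The core step is a short telescoping argument showing that $1\in N$. If $(0,n)\in D_{>0}^{\tau}(Y)$, choose $y=(t,h)\in S[\tau(Y)]$ and $x=(t,h+n)\notin S[\tau(Y)]$ realizing $(0,n)=x-y$; walking along the sequence $(t,h),(t,h+1),\dots,(t,h+n)$, there is a first index $j\in\{1,\dots,n\}$ at which membership in $S[\tau(Y)]$ fails, and then $(0,1)=(t,h+j)-(t,h+j-1)\in D_{>0}^{\tau}(Y)$. Consequently $\infty=1/0$ belongs to $A$; the largest positive element of $A$ is $k:=\max N$ (attained at $b=1$) and the smallest negative is $-k$ (at $b=-1$), so the two neighbors of $\infty$ in the circular order on $A$ are exactly $k$ and $-k$. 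Since $\infty\in A\cap L(Y)$, the consecutive-endpoint condition forces $\infty$ to be an endpoint of $L(Y)$, and hence $L(Y)=[k,\infty]$ or $L(Y)=[\infty,-k]$, matching the lemma with $k=\max N$. The main obstacle is precisely this telescoping step: without the implication $N\ne\emptyset\Rightarrow 1\in N$, one could only deduce $L(Y)=[\max N,-\max N]$ passing through $\infty$, which matches neither of the two forms in the statement.
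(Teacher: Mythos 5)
Your argument is correct and follows essentially the same route as the paper's: identify $i(H_1(\partial Y))$ as the $\Z$-summand generated by the meridian of the drilled unknotted component (so $D^{\tau}_{>0}(Y)\subset\{0\}\times\Z_{>0}$ and the homological longitude is the slope $0$), and then use that the meridional ($\infty$) filling is a lens space to force the Rasmussen--Rasmussen interval to have $\infty$ as an endpoint, with the adjacent elements being $\pm\max N$. The only divergence is your telescoping step showing $1\in N$: the paper does not impose the $\gcd$ normalization on the slope set, so $\infty$ already appears as the slope of the (possibly non-primitive) elements $n\mu$ with $n\in N$, and your extra step --- while correct --- is not needed.
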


\begin{proof}
We suppose that $S^3_{\sfrac{p}{q}}(\mathcal{L})=S^3_{\bullet,\sfrac{p}{q}}(\mathcal{L})$, the case $S^3_{\sfrac{p}{q}}(\mathcal{L})=S^3_{\sfrac{p}{q},\bullet}(\mathcal{L})$ being analogous. We denote $S^3_{\sfrac{p}{q}}(\mathcal{L})$ with $M$.

The lemma follows from Theorem \ref{theorem RR} together with a simple inspection on the possible forms of the set $D^{\tau}_{>0}(M)$:
\begin{itemize}
\item \emph{$D^{\tau}_{>0}(M)$ is empty:} in this case we have that $L(M)=\overline{\Q}\setminus{\{0\}}$.
    \item \emph{$D^{\tau}_{>0}(M)$ is not empty:} 
    recall that by definition $D_{>0}^{\tau}(M)$ is the subset of $H_1(M;\Z)$ defined as 
$$
D_{>0}^\tau(M)=\big{\{}x-y\,|\, x\notin S[\tau(M)], y\in S[\tau(M)]\text{ and } \phi(x)>\phi(y) \big{\}}\cap i(H_1(\partial M; \Z)).
$$
    In our case the projection $\phi$ associated to the identification 
    $$
    H_1(M;\Z)=\Z\oplus \Z_p
    $$
    is simply the map $\phi(x_1,x_2)=x_1$ and therefore the condition $\phi(x)>\phi(y)$ in the definition of $D_{>0}^{\tau}(M)$ implies that 
    $$
    D_{>0}^{\tau}(M)\subset (\Z_{>0}\times \Z_p) \cap i(H_1(\partial M;\Z)).
    $$ 
    Moreover, since $i(H_1(\partial M; \Z))=\Z\times \{0\}$ we have that $D_{>0}^{\tau}(M)$ is
    a subset of $\Z_{>0}\times \{0\}$, and we denote with $S=\{n_1,\dots, n_h\}\subset \Z_{>0}$ the first coordinates of its elements, listed in ascending order. Recall from Lemma \ref{D_tau è finito} that $D^{\tau}_{>0}(M)$ is always a finite set.
    
    We have that 
    $$
    i^{-1}(D^{\tau}_{>0}(M))=\{(n_i,m)\in\Z\times \Z\,|\,\, n_i\in S\text{ and }m\in \Z\}
    $$
    and we know by Theorem \ref{theorem RR} that $L(M)$ is a closed interval in  $\overline{\Q}$ whose endpoints are consecutive elements in the set $\left\{\sfrac{n_i}{m}|\,\, n_i\in S \text{ and }m\in \Z\right\}$. Since the components of $\mathcal{L}$ are unknotted we know that $S^3_{\infty,\sfrac{p}{q}}(\mathcal{L})$ is a L-space (it is indeed a lens space) and therefore that $\infty$ belongs to $L(M)$. Hence we can conclude that either $L(M)=[n_h,\infty]$ or $L(M)=[\infty, -n_h]$. 
\end{itemize}
This concludes the proof.
\end{proof}
As we already anticipated, the first part of Theorem \ref{theorem} will be a corollary of the following more general result, which we will prove soon:

\begin{namedtheorem} [\ref{struttura L space surgery}]
Suppose that $\mathcal{L}$ is a non-trivial link with two unknotted components and linking number zero. Suppose that there exist rationals $\sfrac{p_1}{q_1}> 0$ and $\sfrac{p_2}{q_2}>0$ such that $S^3_{\sfrac{p_1}{q_1},\sfrac{p_2}{q_2}}(\mathcal{L})$ is an L-space. Then there exist non-negative integer numbers $b_1,b_2$ such that 
$$
L(\mathcal{L})=\big{(}[2b_1+1,\infty]\times[2b_2+1,\infty]\big{)}\cup \big{(}\{\infty\}\times \Q^*\big{)} \cup  \big{(}\Q^*\times\{\infty\}\big{)}.
$$
\end{namedtheorem}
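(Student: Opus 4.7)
The plan is to combine Lemma \ref{lemma intervallo} with the Gorsky--Liu--Moore classification \cite{GLM} of positive integer L-space surgery slopes on non-trivial two-component L-space links with unknotted components and linking number zero. The two axis pieces of $L(\mathcal{L})$ come for free: since each component of $\mathcal{L}$ is unknotted, filling one component at slope $\infty$ and the other at any $\sfrac{p}{q}$ with $p\neq 0$ produces a lens space, giving $(\{\infty\}\times\Q^*)\cup(\Q^*\times\{\infty\})\subset L(\mathcal{L})$. Conversely, any slope with $p_1=0$ or $p_2=0$ yields a manifold that is not a rational homology sphere (the linking number is zero) and so is excluded from $L(\mathcal{L})$.

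For the finite part, I would first apply Lemma \ref{lemma intervallo} to the rational homology solid torus $S^3_{\bullet,\sfrac{p_2}{q_2}}(\mathcal{L})$, which is Floer simple because both $\sfrac{p_1}{q_1}>0$ and $\infty$ are L-space filling slopes. The lemma, together with the positivity of $\sfrac{p_1}{q_1}$, restricts its L-space filling set to $\overline{\Q}\setminus\{0\}$ or $[k,\infty]$ for a positive integer $k$. In either case, some positive integer slope on $K_1$ pairs with $\sfrac{p_2}{q_2}$ to give an L-space; iterating symmetrically on $K_2$ produces positive integers $(n_1,n_2)$ such that $S^3_{n_1,n_2}(\mathcal{L})$ is an L-space.

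I would then invoke \cite{GLM}: under our hypotheses, the positive integer L-space surgery slopes on $\mathcal{L}$ form exactly $\{(m_1,m_2)\in\Z^2 : m_i\geq 2b_i+1\}$ for non-negative integers $b_1,b_2$ computed from the $H$-function. Plugging this back, the Floer simple slice at any integer $m_2\geq 2b_2+1$ must equal $[2b_1+1,\infty]$: the alternative $\overline{\Q}\setminus\{0\}$ is excluded since it would contain negative integers that \cite{GLM} (together with a second application of Lemma \ref{lemma intervallo}) forbids, and the left endpoint matches by minimality. For any rational $\sfrac{p_1}{q_1}\geq 2b_1+1$, the vertical slice is Floer simple (containing $\infty$ and all integers $\geq 2b_2+1$) and by Lemma \ref{lemma intervallo} equals $[k',\infty]$ for an integer $k'\leq 2b_2+1$; since \cite{GLM} forbids any integer in $[1,2b_2]$ from lying in $L(\mathcal{L})$, we have $k'=2b_2+1$. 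A symmetric statement holds for horizontal slices. Conversely, if some $0<\sfrac{p_1}{q_1}<2b_1+1$ appeared as the first coordinate of an L-space slope with positive second coordinate, iterating Lemma \ref{lemma intervallo} twice would produce a positive integer $m_1<2b_1+1$ lying in an L-space surgery slope, contradicting \cite{GLM}. This gives the claimed rectangle together with the axes.

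The main obstacle will be invoking \cite{GLM} in the correct form. Our hypothesis provides a positive rational L-space surgery, while their classification is phrased for positive integer L-space surgeries; Step 2 bridges this gap by extracting integer L-space slopes. Tracing how the specific parity $2b_i+1$ emerges from the $H$-function analysis in \cite{GLM}, and checking that their result implies the absence of L-space surgeries with a negative coordinate (used in ruling out $\overline{\Q}\setminus\{0\}$), will be the main technical work.
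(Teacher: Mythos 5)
Your overall strategy is the same as the paper's: bootstrap from the given positive rational L-space slope to positive integer L-space slopes by iterating Lemma \ref{lemma intervallo} (this is the content of the paper's Lemma \ref{lemma razionale implica L space}, which also yields that $\mathcal{L}$ is an L-space link --- the hypothesis you need before quoting \cite{GLM}), pin down $L(\mathcal{L})\cap\Z^2$ via Theorem \ref{Gorsky}, and propagate back to rational slopes with further applications of Lemma \ref{lemma intervallo}. The axes, the inclusion of the rectangle into $L(\mathcal{L})$, and your analysis of the slices at integer heights $m_2\geq 2b_2+1$ all match the paper's argument.

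The gap is in the reverse inclusion. The statement is a set equality, so you must exclude \emph{every} finite slope $(r_1,r_2)$ outside the rectangle, and your converse only treats $0<r_1<2b_1+1$ with $r_2>0$ (and its mirror). Slopes with a negative finite coordinate --- say $r_1\leq -1$, or $r_1\in(-1,0)$, or $r_1\geq 2b_1+1$ but $r_2<0$ --- are not touched by your argument, and since they need not be integer points Theorem \ref{Gorsky} does not forbid them directly. The paper devotes two of its three cases to exactly these ($r_1\in(-1,1)$ and $r_1\leq -1$), each time applying Lemma \ref{lemma intervallo} twice to manufacture an \emph{integer} point with a coordinate $\leq 0$ and then contradicting Theorem \ref{Gorsky}. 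Relatedly, your claim that an excluded slope ``would produce a positive integer $m_1<2b_1+1$'' fails literally when $b_1=0$ and $0<r_1<1$: there is no positive integer below $2b_1+1=1$, and in that sub-case the slice $L(S^3_{\bullet,r_2})$ is forced to equal $\overline{\Q}\setminus\{0\}$, so the contradiction must instead come from a negative integer point such as $(-1,m_2)$. The same issue affects your pinning of the left endpoint $k'=2b_2+1$ for non-integer vertical slices: $(r_1,k')$ with $r_1\notin\Z$ is not an integer point, so you need one more pass of Lemma \ref{lemma intervallo} to replace $r_1$ by $\floor{r_1}$ before citing \cite{GLM}. All of these repairs use the two-step slicing you already employ, so the proof is completable, but as written the case analysis does not establish the claimed equality.
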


As a corollary of Theorem \ref{struttura L space surgery} we have:

\begin{cor}[First part of Theorem \ref{theorem}]\label{theorem part 1}
The $3$-manifold  $S^3_{\sfrac{p_1}{q_1}, \sfrac{p_2}{q_2}}({\rm WL})$ is an L-space if and only if $\sfrac{p_1}{q_1}\geq 1$ and $\sfrac{p_2}{q_2}\geq 1$. 
\end{cor}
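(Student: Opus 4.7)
The plan is to apply Theorem~\ref{struttura L space surgery} to the Whitehead link. That theorem requires one L-space surgery $S^3_{\sfrac{p_1}{q_1},\sfrac{p_2}{q_2}}({\rm WL})$ with both slopes positive, so the first step is to exhibit one. I would verify $(1,1) \in L({\rm WL})$ either by identifying $S^3_{1,1}({\rm WL})$ geometrically---filling one component of ${\rm WL}$ (unknotted) at slope $1$ yields $S^3$ with the other component becoming some knot $K \subset S^3$, so $S^3_{1,1}({\rm WL}) = S^3_1(K)$, and if $K$ is an L-space knot of genus at most $1$ the Kronheimer-Mrowka-Ozsv\'ath-Szab\'o criterion \cite{KMOS} applies---or directly from the $H$-function computation of \cite{GLM}, which determines the integer L-space surgery region of ${\rm WL}$.

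Once $(1,1) \in L({\rm WL})$ is established, Theorem~\ref{struttura L space surgery} yields non-negative integers $b_1,b_2$ with
$$
L({\rm WL}) = \bigl([2b_1+1,\infty] \times [2b_2+1,\infty]\bigr) \cup \bigl(\{\infty\} \times \Q^*\bigr) \cup \bigl(\Q^* \times \{\infty\}\bigr).
$$
The symmetry of the Whitehead link (an isotopy exchanges its two components) makes $L({\rm WL})$ invariant under the coordinate swap, so $b_1 = b_2$. The witness $(1,1)$ lies in neither infinity row, hence in $[2b_1+1,\infty] \times [2b_2+1,\infty]$, forcing $2b_1+1 \leq 1$ and thus $b_1 = b_2 = 0$.

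Since the corollary assumes $p_1,q_1,p_2,q_2$ are all nonzero, neither slope can equal $\infty$ and the infinity rows do not contribute. Hence $S^3_{\sfrac{p_1}{q_1},\sfrac{p_2}{q_2}}({\rm WL})$ is an L-space if and only if $(\sfrac{p_1}{q_1},\sfrac{p_2}{q_2}) \in [1,\infty] \times [1,\infty]$, i.e.\ if and only if $\sfrac{p_1}{q_1} \geq 1$ and $\sfrac{p_2}{q_2} \geq 1$.

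The decisive step---and the main obstacle---is the first one: producing the positive-slope L-space witness. Without such an input Theorem~\ref{struttura L space surgery} is vacuous, and establishing $(1,1) \in L({\rm WL})$ requires either the Heegaard Floer / $H$-function computation of \cite{GLM} or a careful geometric identification of the surgery manifold combined with known L-space surgery results for knots in $S^3$. Once this datum is secured, the remainder of the proof is a formal consequence of Theorem~\ref{struttura L space surgery} and the component-swapping symmetry of the Whitehead link.
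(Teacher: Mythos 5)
Your proposal is correct and follows essentially the same route as the paper: exhibit one positive L-space surgery slope and then invoke Theorem~\ref{struttura L space surgery} (the witness $(1,1)$ forces $b_1=b_2=0$ since it must lie in $[2b_1+1,\infty]\times[2b_2+1,\infty]$). The paper secures the witness slightly more directly by identifying $S^3_{1,1}({\rm WL})$ as the Poincar\'e homology sphere, which is an L-space because its fundamental group is finite; your trefoil/KMOS route amounts to the same fact, since $S^3_{1,\bullet}({\rm WL})$ is the right-handed trefoil exterior.
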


\begin{proof}
The $(1,1)$-surgery on the Whitehead link is the Poincaré homology sphere, that has finite fundamental group and is therefore an L-space. In other words $(1,1)$ belongs to $L({\rm WL})$. The Whitehead link also has unknotted components and linking number zero and we can therefore apply Theorem \ref{struttura L space surgery}, that immediately implies the thesis.
\end{proof}

To prove Theorem \ref{struttura L space surgery} we recall the definition of L-space link, as given by Gorsky and Némethi in \cite{GN}. We give the definition for a $2$-component link, but it is generalisable to links with more components.

\begin{definition}(\cite{GN})
A link $\mathcal{L}\subset S^3$ is an \textbf{L-space link} if all sufficiently large integer surgeries are L-spaces, i.e. if there exist integers $p_1,p_2$ such that $S^3_{d_1,d_2}(\mathcal{L})$ is an L-space for all  integers $d_1>p_1$ and $d_2>p_2$. 
\end{definition}

For knots, the existence of a positive rational L-space surgery implies the existence of arbitrarily large L-space surgeries, but this fails in the case of links, as the Example $2.4$ in \cite{Liu} shows.

Nevertheless, the following lemma shows that such generalisation holds if $\mathcal{L}$ has unknotted components and linking number zero, or more generally if it is a $n$-component \emph{Brunnian} link. Recall that a link $L$ with three or more components is Brunnian if all of its sublinks are unlinks. A link with two components is Brunnian when its components are unknotted and have linking number zero. We will use the symbols $\floor*{x}$ and $\ceil*{x}$, where $x$ is a rational number, to denote the integers
$$
\floor*{x}=\text{max}\{k\in \Z | k\leq x\}
$$
$$
\ceil*{x}=\text{min}\{k\in \Z | k\geq x\}.
$$

\begin{lemma}\label{lemma razionale implica L space}
Let $\mathcal{L}$ be an $n$-component Brunnian link and suppose that there exist rationals $r_1>0,r_2>0,\cdots, r_n>0$ such that $S^3_{r_1,\dots, r_n}(\mathcal{L})$ is an L-space. Then $S^3_{s_1,\dots,s_n}(\mathcal{L})$ is an L-space for
all $(s_1,\dots, s_n)$ satisfying
$$
\begin{cases}
s_i\geq \floor*{r_i}\quad \text{if $r_i\geq1$}\\
s_i>\floor*{r_i}=0\quad \text{if $0<r_i<1$}.
\end{cases}
$$
In particular $\mathcal{L}$ is an L-space link.
\end{lemma}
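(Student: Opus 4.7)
The plan is to argue by induction on the number of components $n$, changing slopes one component at a time and invoking Lemma \ref{lemma intervallo} at each step. For the base case $n=1$ the link is an unknot, every positive integer surgery on it is a lens space, and the conclusion is immediate.

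For $n=2$, which is the case that actually feeds into Theorem \ref{struttura L space surgery}, I would first set $Y := S^3_{r_1, \bullet}(\mathcal{L})$, the rational homology solid torus obtained by drilling $K_2$. By hypothesis $r_2 \in L(Y)$, and moreover $\infty \in L(Y)$ since $Y(\infty) = S^3_{r_1}(K_1)$ is a lens space (the component $K_1$ being unknotted). Hence $Y$ is Floer simple, and Lemma \ref{lemma intervallo} tells us that $L(Y)$ is either $\overline{\Q}\setminus\{0\}$, an interval $[k,\infty]$, or an interval $[\infty,-k]$ with $k$ a positive integer. The last case is ruled out because $r_2>0$. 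If $r_2 \geq 1$, the middle case forces $k \leq \lfloor r_2 \rfloor$ by the integrality of $k$ and $k \leq r_2$, while if $0<r_2<1$ the middle case is impossible (it would require $1\leq k \leq r_2 < 1$), so necessarily $L(Y) = \overline{\Q}\setminus\{0\}$ in that subcase. In every case, each integer $s_2$ satisfying the stated hypothesis lies in $L(Y)$, so $S^3_{r_1,s_2}(\mathcal{L})$ is an L-space. Applying the same argument to $Y' := S^3_{\bullet,s_2}(\mathcal{L})$, noting that $Y'(\infty) = S^3_{s_2}(K_2)$ is again a lens space since $s_2$ is an integer, then upgrades this to $S^3_{s_1,s_2}(\mathcal{L})$ being an L-space for all $s_1$ in the allowed range.

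For $n\geq 3$ the same strategy iterates, changing one slope at a time. The hard part will be to secure Floer simplicity of each intermediate rational homology solid torus: the $\infty$-filling no longer yields a lens space but a surgery on an $(n-1)$-component sublink, whose L-space property must itself be established. I would handle this by scheduling the slope changes so that at each stage the relevant $(n-1)$-component sublink surgery has already been absorbed into the inductive framework, using that we have already turned some of the other rational slopes into integers. The final assertion that $\mathcal{L}$ is an L-space link is then automatic from the main conclusion, since any tuple of sufficiently large positive integers satisfies the stated bounds.
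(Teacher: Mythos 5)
Your $n=2$ argument is correct and is essentially the paper's own proof: the paper likewise changes one slope at a time, obtains Floer simplicity of each intermediate rational homology solid torus from the fact that the $\infty$-filling of the drilled component is a lens space (the components being unknotted), and applies Lemma \ref{lemma intervallo} to conclude that $[\lfloor r_i\rfloor,\infty]$ (resp.\ all of $\overline{\Q}\setminus\{0\}$ when $0<r_i<1$) consists of L-space filling slopes; you are in fact slightly more explicit than the paper about where Floer simplicity comes from. One caveat: your restriction to \emph{integer} $s_2$ is both unnecessary and a genuine weakening of the statement. By your own case analysis $L(Y)$ contains the entire rational interval $[\lfloor r_2\rfloor,\infty]$ (resp.\ $(0,\infty]$), and $S^3_{s_2}(K_2)$ is a lens space for \emph{every} nonzero rational $s_2$ because $K_2$ is unknotted --- integrality plays no role there. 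Since the lemma is meant to hold for rational multislopes (this is how it is exploited in the proof of Theorem \ref{struttura L space surgery}), you should simply delete the word ``integer''.

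For $n\geq 3$ you have correctly located the difficulty: the $\infty$-filling of an intermediate manifold is a positive surgery on an $(n-1)$-component sublink, which is not known a priori to be an L-space, so Floer simplicity is not free. However, your proposed remedy does not address this. Converting some of the other slopes to integers beforehand does nothing to show that the manifold obtained by \emph{deleting} a component is an L-space; the obstruction is the removal of a component, not the arithmetic of the remaining slopes, and no ordering of the slope changes avoids it. To be fair, the paper itself proves only the case $n=2$ and dismisses the general case as ``analogous'' (and only $n=2$ is used downstream, in Theorem \ref{struttura L space surgery}), so this is a gap you share with the source rather than one you introduce --- but as written your sketch does not close it.
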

\begin{proof}
We suppose that $\mathcal{L}$ has two components, the proof being analogous in the general case.
We have the following cases:
\begin{itemize}
    \item \emph{$r_1\geq 1$ and $r_2\geq 1$:}
    we start by considering the set $L(S^3_{r_1,\bullet}(\mathcal{L}))$. We know by hypothesis that this set contains $r_2$ and since $r_2$ is positive it follows from Lemma \ref{lemma intervallo} that $L(S^3_{r_1,\bullet}(\mathcal{L}))$ must be either $\overline{\Q}\setminus\{0\}$ or $[k,\infty]$ for some positive natural number $k$. In both of these cases, since $r_2\geq 1$ we can deduce that 

$$
\left[\floor*{r_2},\infty\right]\subset L(S^3_{r_1,\bullet}(\mathcal{L})).
$$

We now consider the set $L(S^3_{\bullet,\floor*{r_2}}(\mathcal{L}))$. We have just proved that it contains $r_1$ and with the same argument as before we can deduce that 
$$
\left[\floor*{r_1},\infty\right]\subset L(S^3_{\bullet,\floor*{r_2}}(\mathcal{L})).
$$
By applying the same reasoning it follows that for every $s_1\geq \floor*{r_1}$ we have that 
$$
\left[\floor*{r_2},\infty\right]\subset L(S^3_{s_1,\bullet}(\mathcal{L}))
$$
and this is exactly what we wanted. A pictorial sketch of the proof is showed in Figure \ref{proof}. 
    \item \emph{$r_1\geq 1$ and $r_2<1$:} since $0<r_2<1$ we have $L(S^3_{r_1,\bullet}(\mathcal{L}))=\overline{\Q}\setminus\{0\}$ and in particular
    $$
    (0,\infty]=\left(\floor*{r_2},\infty\right]\subset L(S^3_{r_1,\bullet}(\mathcal{L})).
    $$
    This implies that for any $s_2>0$ we have that $S^3_{r_1,s_2}(\mathcal{L})$ is an L-space and therefore by applying again Lemma \ref{lemma intervallo}, since $r_1\geq 1$, we have that 
    $$
    \left[\floor*{r_1},\infty\right]\subset L(S^3_{\bullet,s_2}(\mathcal{L}))
    $$
    and this is exactly what we wanted.
    \item \emph{$r_1<1$ and $r_2\geq 1$:} this case is completely analogous to the previous one.
    \item \emph{$r_1<1$ and $r_2<1$:} also in this case we have that 
    $$
    (0,\infty]=\left(\floor*{r_2},\infty\right]\subset L(S^3_{r_1,\bullet}(\mathcal{L}))=\overline{\Q}\setminus\{0\}.
    $$
    As a consequence, for any $s_2>0$ we have that $S^3_{r_1,s_2}(\mathcal{L})$ is an L-space and therefore by applying again Lemma \ref{lemma intervallo}, since $0<r_1< 1$, we have that 
    $$
    (0,\infty]=\left(\floor*{r_1},\infty\right]\subset L(S^3_{\bullet,s_2}(\mathcal{L}))=\overline{\Q}\setminus\{0\}.
    $$
\end{itemize}
This concludes the proof.
\end{proof}

\begin{figure}[H]
   \centering
    \includegraphics[width=0.6\textwidth]{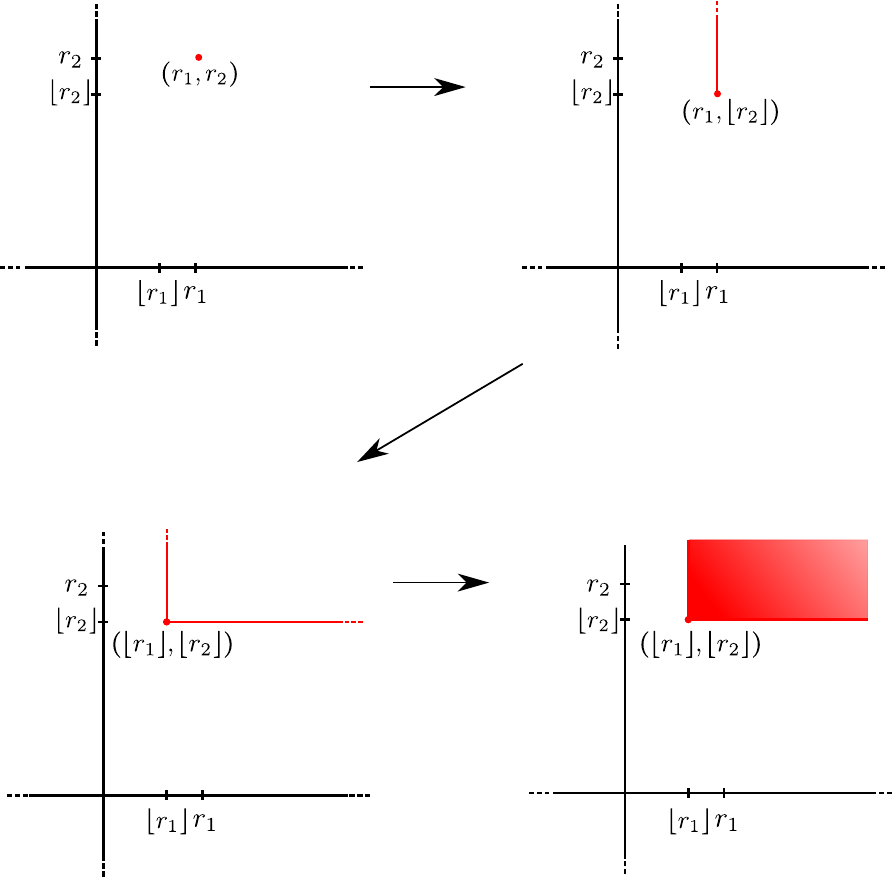}
    \caption{A pictorial sketch of the proof of Lemma \ref{lemma razionale implica L space}.}
    \label{proof}
\end{figure}

\begin{remark}
It follows from Theorem \ref{struttura L space surgery} that if $\mathcal{L}$ has two components, then in the previous lemma the case $0<r_1<1$ or $0<r_2<1$ cannot occur.
\end{remark}

Before proving Theorem \ref{struttura L space surgery} we recall the following theorem from \cite{GLM}.

\begin{theorem}\label{Gorsky}(\cite{GLM})
Assume that $\mathcal{L}$ is a non-trivial L–space link with unknotted components
and linking number zero. Then there exist non-negative integers $b_1,b_2$ such that for $p_1,p_2\in \Z$ we have that $S^3_{p_1,p_2}(\mathcal{L})$ is an L–space if and only if $p_1>2b_1$ and
$p_2>2b_2$.

\end{theorem}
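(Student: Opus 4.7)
The plan is to derive Theorem~\ref{Gorsky} from the Manolescu-Ozsv\'ath large surgery formula for L-space links, specialized to the present geometric hypotheses. I would first recall the H-function $H_{\mathcal{L}}: \Z^2 \to \Z_{\geq 0}$ of a 2-component link, defined from the Alexander-filtered link Floer complex; the key fact about L-space links is that this single function encodes the Heegaard Floer data of all sufficiently large surgeries. The combinatorial surgery formula then reads: for $(p_1,p_2)$ in the ``large'' region, $HF^-$ of $S^3_{p_1,p_2}(\mathcal{L})$ is computed as the homology of a lattice complex built from the values of $H_{\mathcal{L}}$, and $S^3_{p_1,p_2}(\mathcal{L})$ is an L-space if and only if this lattice complex has minimal homology for every ${\rm Spin}^c$ structure.

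The second step is to exploit the unknotted-linking-zero hypothesis to pin down the structure of $H_{\mathcal{L}}$. Since each component $K_i$ is an unknot, the marginal H-function obtained by letting the other Alexander grading tend to $+\infty$ coincides with $\max(-s,0)$. The linking number zero ensures that the ${\rm Spin}^c$ labeling of the lattice complex is unsheared, so the roles of the two surgery coefficients decouple. Combined with the standard semicontinuity $H_{\mathcal{L}}(s_1-1,s_2) - H_{\mathcal{L}}(s_1,s_2) \in \{0,1\}$ (and its $s_2$ analog) and the conjugation symmetry of H-functions of L-space links, this forces $H_{\mathcal{L}}$ to stabilize to finite constants in the appropriate asymptotic regimes; one then defines $b_i\geq 0$ as the ``excess'' of $H_{\mathcal{L}}$ over the marginal $H_{K_i}$ at the opposite end of the lattice.

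With this structure in hand, the L-space criterion is checked ${\rm Spin}^c$ by ${\rm Spin}^c$: minimality of the lattice homology at a given ${\rm Spin}^c$ translates into an inequality relating the corresponding surgery coefficient $p_i$ and the value of $H_{\mathcal{L}}$ there, and the ``worst'' ${\rm Spin}^c$ turns out to witness exactly the threshold $2b_i$. The main obstacle I would expect is this final bookkeeping: setting up the lattice complex, labeling the ${\rm Spin}^c$ structures correctly, and identifying the precise point at which minimality fails is combinatorially delicate, and it is here that the ``unknotted components and linking number zero'' hypotheses do the heavy lifting, by collapsing what would otherwise be a pair of coupled inequalities into the clean rectangular region $\{(p_1,p_2)\in \Z^2 : p_1 > 2b_1,\; p_2 > 2b_2\}$. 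A sanity check against Lemma~\ref{lemma intervallo} would confirm that fixing one integer coordinate $p_1 > 2b_1$ and varying the other indeed produces an interval of L-space slopes of the form $[2b_2+1,\infty]$, consistent with the rectangular shape claimed.
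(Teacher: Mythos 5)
The paper does not prove Theorem \ref{Gorsky} at all: it is imported verbatim from \cite{GLM} and used as a black box (the only internal comment is the remark that $b_1,b_2$ can be computed from the $H$-function of $\mathcal{L}$). So the relevant benchmark is the proof in \cite{GLM}, and your sketch does point at the same machinery used there — the $H$-function of a two-component L-space link, the Manolescu--Ozsv\'ath surgery formula, the fact that unknotted components force the one-variable limits of $H_{\mathcal{L}}$ to agree with the unknot's $H$-function, and that linking number zero removes the shear in the ${\rm Spin}^c$ labeling of the lattice.

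However, as a proof your text has genuine gaps rather than omitted routine details. First, the large surgery formula you invoke identifies the Floer homology of $S^3_{p_1,p_2}(\mathcal{L})$ with $H$-function data only when $(p_1,p_2)$ lies in the large-surgery range, whereas the theorem makes a claim for \emph{every} integer pair, including the threshold values $p_i=2b_i+1$ and, crucially, the small and negative coefficients where one must show the surgery is \emph{not} an L-space; for that one needs the full truncated link surgery complex or d-invariant arguments, and this ``only if'' direction is not addressed beyond the assertion that a ``worst'' ${\rm Spin}^c$ structure witnesses the threshold $2b_i$. Second, the claims that $H_{\mathcal{L}}$ stabilizes with a well-defined excess $b_i$ and that minimality of the lattice homology collapses exactly to the rectangular condition $p_1>2b_1$, $p_2>2b_2$ are precisely the content of the theorem in \cite{GLM}; labeling them delicate bookkeeping does not establish them. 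Your closing sanity check against Lemma \ref{lemma intervallo} is also circular in the context of this paper, since that lemma concerns rational filling slopes of a Floer simple manifold and is used here alongside Theorem \ref{Gorsky}, not as independent evidence for it. In short, the proposal is a reasonable reading plan for the argument in \cite{GLM}, but not a proof; within this paper the statement is, and can legitimately remain, a citation.
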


\begin{proof}[Proof of Theorem \ref{struttura L space surgery}]
We know from Lemma \ref{lemma razionale implica L space} that $\mathcal{L}$ is an L-space link. Therefore we can apply Theorem \ref{Gorsky} and deduce that there exist non-negative integers $b_1,b_2$ such that
$$
L(\mathcal{L})\cap\Z^2=[2b_1+1,\infty)\times [2b_2+1,\infty)\cap \Z^2
$$
Exactly as in the proof of Lemma \ref{lemma razionale implica L space}, we can use Lemma \ref{lemma intervallo} to deduce that 
$$
L(\mathcal{L})\supset\big{(}[2b_1+1,\infty]\times[2b_2+1,\infty]\big{)}\cup \big{(}\{\infty\}\times \Q^*\big{)} \cup  \big{(}\Q^*\times\{\infty\}\big{)}.
$$
and therefore we only have to prove that this inclusion is an equality.

Suppose on the contrary that there exists an L-space surgery slope $\left(r_1,r_2\right)$, with $r_1,r_2$ rationals, such that
$$
\left(r_1,r_2\right)\notin\big{(}[2b_1+1,\infty]\times[2b_2+1,\infty]\big{)}\cup \big{(}\{\infty\}\times \Q^*\big{)} \cup  \big{(}\Q^*\times\{\infty\}\big{)}.
$$
We suppose that $r_1<2b_1+1$. The case $r_2< 2b_2 +1$ can be solved in the same way. We have the following cases:
\begin{itemize}
    \item $1\leq r_1<2b_1+1$.
    
    By virtue of Lemma \ref{lemma intervallo} we have that $\left[\floor*{r_1},\infty\right]$ is contained in $L(S^3_{\bullet,r_2})$. This implies that $S^3_{\floor*{r_1},{\bullet}}$ is Floer simple and therefore, by applying again Lemma \ref{lemma intervallo}, we deduce that it admits integral L-space filling slopes. In this way we produce a point in
    $$
    (L(\mathcal{L})\cap\Z^2))\setminus \Big{(}\big{(}[2b_1+1,\infty)\times [2b_2+1,\infty)\big{)}\cap \Z^2\Big{)}
    $$ contradicting Theorem \ref{Gorsky}.
    \item $r_1\in (-1,1)$.
    
      As a consequence of Lemma \ref{lemma intervallo} we have that $L(S^3_{\bullet,r_2})=\overline{\Q}\setminus\{0\}$. Therefore if we fix any negative integer $-m<0$ we have that $r_2\in L(S^3_{-m,\bullet})$ and by applying again Lemma \ref{lemma intervallo} we deduce that there exist integral L-space filling slopes on $S^3_{-k,\bullet}$, contradicting Theorem \ref{Gorsky}.
    \item $r_1\leq-1$.
    
    By applying the same argument used in the first case we have that $L(S^3_{\bullet,r_2})$ contains $\left[\infty,\ceil*{r_1}\right]$. Therefore $S^3_{\ceil*{r_1},{\bullet}}$ admits integral L-space filling slopes, contradicting Theorem \ref{Gorsky}.
    
\end{itemize}
The proof is complete.
\end{proof}

\section{Coorientable Taut Foliations}
In this section we study the existence of taut foliations on the Dehn fillings on the Whitehead link exterior. The main theorem in this section is the following:

\begin{theorem}\label{theorem part 2}
Let $p_1, q_1$ and $p_2,q_2$ be two pairs of non vanishing coprime integers.
Let $S^3_{\sfrac{p_1}{q_1}, \sfrac{p_2}{q_2}}({\rm WL})$ be the $\left(\sfrac{p_1}{q_1}, \sfrac{p_2}{q_2}\right)$-surgery on the Whitehead link. Then
 $S^3_{\sfrac{p_1}{q_1}, \sfrac{p_2}{q_2}}({\rm WL})$ supports a cooriented taut foliation if and only if $\sfrac{p_1}{q_1}< 1$ or $\sfrac{p_2}{q_2}< 1$.
\end{theorem}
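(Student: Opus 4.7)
The forward direction, that $\sfrac{p_1}{q_1}\geq 1$ and $\sfrac{p_2}{q_2}\geq 1$ forces the absence of a cooriented taut foliation, is immediate: by Corollary \ref{theorem part 1} such a manifold is an L-space, and by the results of Oszv\'ath-Szab\'o, Bowden and Kazez-Roberts recalled in the introduction, L-spaces do not admit cooriented taut foliations. The real content of the theorem is the converse: producing a cooriented taut foliation on $S^3_{\sfrac{p_1}{q_1},\sfrac{p_2}{q_2}}({\rm WL})$ whenever $\sfrac{p_1}{q_1}<1$ or $\sfrac{p_2}{q_2}<1$. By the isotopy of the Whitehead link exchanging its two components, we may assume without loss of generality that $\sfrac{p_1}{q_1}<1$, and then we must construct a foliation for \emph{every} admissible $\sfrac{p_2}{q_2}$.

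The plan is to exploit the well-known fibered structure of the Whitehead link exterior as a surface bundle over $S^1$ whose fiber is a twice-holed torus, together with its explicit monodromy. This places the problem squarely inside the framework of Theorem \ref{teorema foliazioni}, the general statement about taut foliations on fillings of $k$-holed torus bundles with a prescribed monodromy. Following the Roberts and Kalelkar-Roberts philosophy, I would select a family of train tracks on the fiber, each carrying a specified range of boundary slopes, and then spin them along the monodromy to build branched surfaces $B$ embedded in the link exterior, each designed so that the slopes of its boundary train tracks on the two cusp tori cover a prescribed region of $\overline{\Q}\times\overline{\Q}$ containing the target slopes.

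Given such a $B$ and a target pair $(\sfrac{p_1}{q_1},\sfrac{p_2}{q_2})$ that it carries, I would Dehn-fill along both slopes and cap the boundary components of $B$ off by meridian discs to form a branched surface $\overline{B}$ inside $S^3_{\sfrac{p_1}{q_1},\sfrac{p_2}{q_2}}({\rm WL})$. The essential verification, which is the content of Li's theorem \cite{L2} recalled in Subsection $3.1$, is that $\overline{B}$ contains no sink discs; once this is established, $\overline{B}$ fully carries an essential lamination which, after collapsing product complementary regions and performing the usual disc surgeries, extends to a cooriented taut foliation of the filled manifold.

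The main obstacle is that a single branched surface cannot cover all admissible pairs of slopes, since $\sfrac{p_2}{q_2}$ ranges freely over $\overline{\Q}\setminus\{0,\infty\}$ while $\sfrac{p_1}{q_1}$ ranges over $(-\infty,1)\setminus\{0\}$. I therefore expect the argument to proceed by a case analysis on the position of the two slopes (positive or negative, small or large in absolute value, and close to or far from the critical value $1$), in each case exhibiting an explicit train track on the twice-holed fiber together with a spinning prescription that yields a branched surface covering the required region of slopes and satisfying the no-sink-disc condition after Dehn filling. Verifying that every admissible pair is carried by at least one of these branched surfaces, and handling the delicate boundary cases where $\sfrac{p_1}{q_1}$ approaches $1$ or where the slope on the second component lies at the edge of a covered region, will be the most intricate combinatorial step, and is presumably what makes it necessary to split the construction between the general result of Subsection $3.2$ and the case-specific refinements of Subsection $3.3$.
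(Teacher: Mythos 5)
Your overall architecture is the paper's: the ``only if'' direction follows from Corollary \ref{theorem part 1} together with Ozsv\'ath--Szab\'o/Bowden/Kazez--Roberts, and the ``if'' direction is carried out by building branched surfaces from arcs on the twice-holed fiber of ${\rm WL}$, invoking Li's criterion, and splitting the work between Theorem \ref{teorema foliazioni} and an ad hoc construction. However, what you have written is a roadmap rather than a proof, and the places where you defer to ``case analysis'' are exactly where the content lies.

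Concretely, three things are missing. First, you never pin down the monodromy $h=\tau_0\tau_1\tau_2^{-1}$ and never compute what Theorem \ref{teorema foliazioni} actually gives for it: since $a_0=1>0$ and the two boundary components both carry label $n$, the theorem covers precisely $(\infty,1)^2\cup(0,\infty)\times(\infty,0)\cup(\infty,0)\times(0,\infty)$ (Corollary \ref{corollario quasi tutto}). Without this computation you cannot see which admissible pairs are left over. Second, the leftover region $(0,\infty)\times(-1,1)\cup(-1,1)\times(0,\infty)$ --- e.g.\ a pair like $(5,\sfrac{1}{2})$ --- is \emph{not} reachable from Theorem \ref{teorema foliazioni}; the paper handles it with a genuinely different branched surface built from a new pair of arcs $\alpha,\beta$ (Figure \ref{B_2}), with its own sink-disc check and its own boundary-train-track slope computation. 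Your proposal contains no candidate for this construction, and producing one is the hardest step. Third, you elide the passage from essential lamination to \emph{taut} foliation. An extension of an essential lamination by product complementary regions need not be taut; the paper argues tautness either by showing the branched surface carries no closed surface other than the fiber (hence the foliation has no compact leaf) or, for the second branched surface, by showing every leaf meets $\partial M$ so that the cores of the filling tori are closed transversals. A correct write-up must include one of these arguments. Also note a small technical slip: Li's hypotheses (laminarity, no half sink discs, $\partial M\setminus\partial B$ a union of bigons, no carried torus bounding a solid torus) are verified for $B$ in the link exterior before filling, not for the capped-off $\overline{B}$.
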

In this paper the term \emph{foliation} will refer to codimension-$1$ foliations of class $C^{\infty,0}$, as defined for example in \cite{Candel} and \cite{KR2}. We recall the definition here. We denote with $\text{H}^k$ the $k$-dimensional Euclidean closed half space 
$$
\text{H}^k=\{(x_1,\dots,x_k)\in \R^k\,|\,x_k\geq 0\}.
$$
\begin{definition}
A $\mathbf{C^{\infty,0}}$ \textbf{codimension-1 foliation} $\mathcal{F}$ of a  smooth $3$-manifold $M$ with (possibly empty) boundary is a decomposition of $M$ into the union of disjoint smoothly injectively immersed surfaces, called the \textbf{leaves} of $\mathcal{F}$, together with a collection of charts $(U_i,\phi_i)_{i\in \mathcal{I}}$ covering $M$ such that:
\begin{itemize}
    \item $\phi_i:U_i \rightarrow \mathcal{X}$ is a homeomorphism, where $\mathcal{X}$ is either $\R^2\times \R$ or $\R^2\times \text{H}^1$ or $\text{H}^2\times \R$, with the property that the image of each component of a leaf intersected with $U_i$ is a slice $\R^2\times \{\text{point}\}$ or $\text{H}^2\times \{\text{point}\}$;
    \item all partial derivatives of any order in the variables $x$ and $y$ on the domain of each transition function $\phi_j\phi_i^{-1}$ are continuous; here we have fixed coordinates $(x,y,z)$ on $\mathcal{X}$.
\end{itemize}
\end{definition}
The three local models for a foliation are depicted in Figure \ref{model foliations}, where $\partial \mathcal{X}$ is shaded.

\begin{figure}[H]
    \centering
    \includegraphics[width=0.8\textwidth]{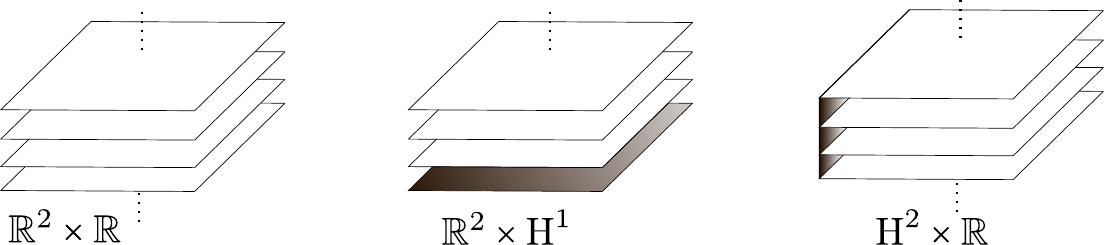}
    \caption{Local models for a foliation.}
    \label{model foliations}
\end{figure}

\begin{remark}
The tangent planes to the leaves of a foliation $\mathcal{F}$ of a $3$-manifold $M$ define a continuous plane subbundle of $TM$, that we denote with $T\mathcal{F}$.
\end{remark}

\begin{definition}
A foliation $\mathcal{F}$ of a $3$-manifold $M$ is \textbf{orientable} if the plane bundle $T\mathcal{F}$ is orientable and is \textbf{coorientable} if the line bundle $TM/T\mathcal{F}$ is orientable.
\end{definition}

\begin{definition}
A foliation $\mathcal{F}$ of a $3$-manifold $M$ is \textbf{taut} if every leaf of $\mathcal{F}$ intersects a closed transversal, i.e. a smooth simple closed curve in $M$ that is transverse to $\mathcal{F}$.
\end{definition}
There are several definitions of tautness and in general they are not equivalent. For details we refer to \cite{CKR}, where also the relations among these different notions are discussed. 
\\

We recall that in order to prove Theorem \ref{theorem part 2} it is enough to prove the ``if'' part, since L-spaces do not support taut foliations (see \cite{OS}, \cite{Bow}, \cite{KR2}), and we have already proved in the previous section that if $\sfrac{p_i}{q_i}\geq 1$ for $i=1,2$, then $S^3_{\sfrac{p_1}{q_1}, \sfrac{p_2}{q_2}}({\rm WL})$ is an L-space.
\\

This section is organised as follows. In Section \ref{Background} we recall the required background regarding branched surfaces and we state the theorem of \cite{L2}. In Section \ref{Constructing taut foliations} we prove Theorem \ref{teorema foliazioni}, regarding the existence of taut foliations on Dehn fillings on manifolds that fiber over the circle with fiber a $k$-holed torus and with some prescribed monodromy. This theorem will be useful to prove that many of the manifolds of Theorem \ref{theorem part 2} support taut foliations. In Section \ref{The Whitehead link case} we conclude the proof of Theorem \ref{theorem part 2}.

\subsection{Background}\label{Background}

In this and in the next sections we will assume familiarity with the basic notions of the theory of train tracks; see \cite{PH} for reference. We only point out that in the cases of our interest, train tracks can also have bigons as complementary regions. 

We now recall some basic facts about branched surfaces. We refer to \cite{FO} and \cite{O} for more details.

\begin{definition}
A \textbf{branched surface with boundary} in a $3$-manifold $M$ is a closed subset $B\subset M$ that is locally diffeomorphic to one of the models in $\R^3$ of Figure \ref{branched surface}a) or to one of the models in the closed half space of Figure \ref{branched surface}b), where $\partial B:= B\cap \partial M$ is represented with a bolded line:
\begin{figure}[H]
    \centering
    \includegraphics[width=0.8\textwidth]{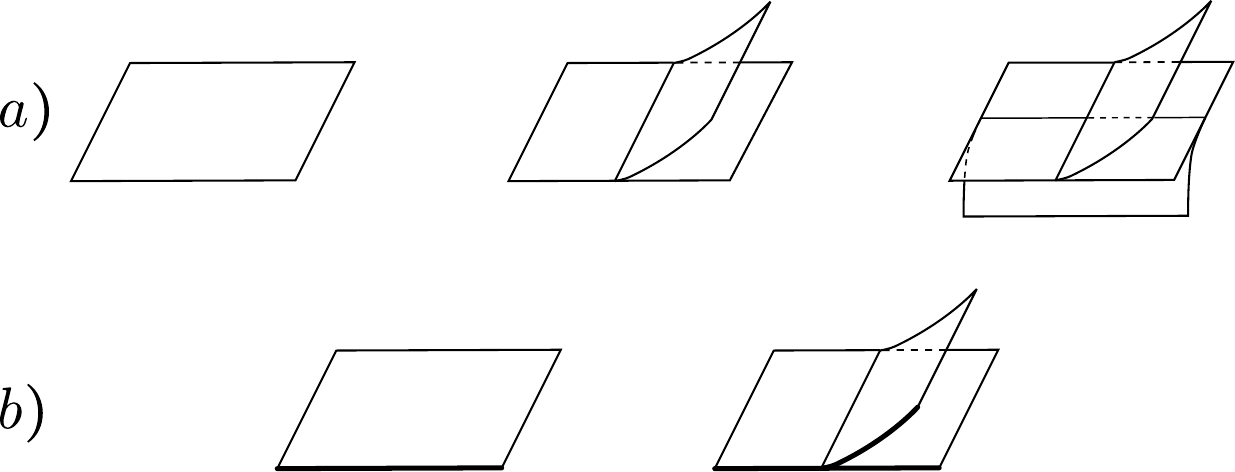}
    \caption{Local models for a branched surface.}
    \label{branched surface}
\end{figure}

\end{definition}

Branched surfaces generalise the concept of train tracks from surfaces to $3$-manifolds and when the boundary of $B$ is non empty it defines a train track $\partial B$ in $\partial M$.

If $B$ is a branched surface it is possible to identify two subsets of $B$: the \textbf{branch locus} and the set of \textbf{triple points} of $B$. The branch locus is defined as the set of points where $B$ is not locally homeomorphic to a surface. It is self-transverse and intersects itself in double points only. The set of triple points of $B$ can be defined as the points where the branch locus is not locally homeomorphic to an arc. For example, the rightmost model of Figure \ref{branched surface}a) contains a triple point.

The complement of the branch locus in $B$ is a union of connected surfaces. The abstract closures of these surfaces under any path metric on $M$ are called the \textbf{branch sectors} of $B$. Analogously the complement of the set of the triple points inside the branch locus is a union of $1$-dimensional connected manifolds. Moreover to each of these manifolds we can associate an arrow in $B$ pointing in the direction of the smoothing, as we did in Figure \ref{cusp directions}. We call these arrows \textbf{branch directions}, or also \textbf{cusp directions}.

\begin{figure}[H]
    \centering
    \includegraphics[width=0.6\textwidth]{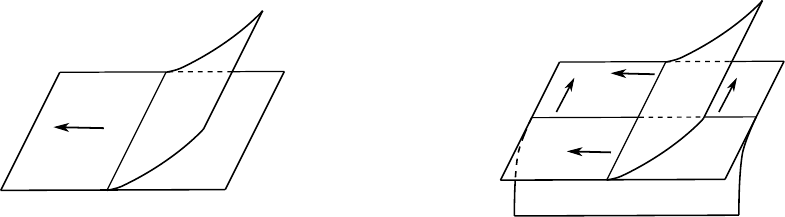}
    \caption{Some examples of cusp directions.}
    \label{cusp directions}
\end{figure}

If $B$ is a branched surface in $M$, we denote with $N_B$ a fibered regular neighbourhood of $B$ constructed as suggested in Figure \ref{regular neighbourhood}.

\begin{figure}[H]
    \centering
    \includegraphics[width=0.6\textwidth]{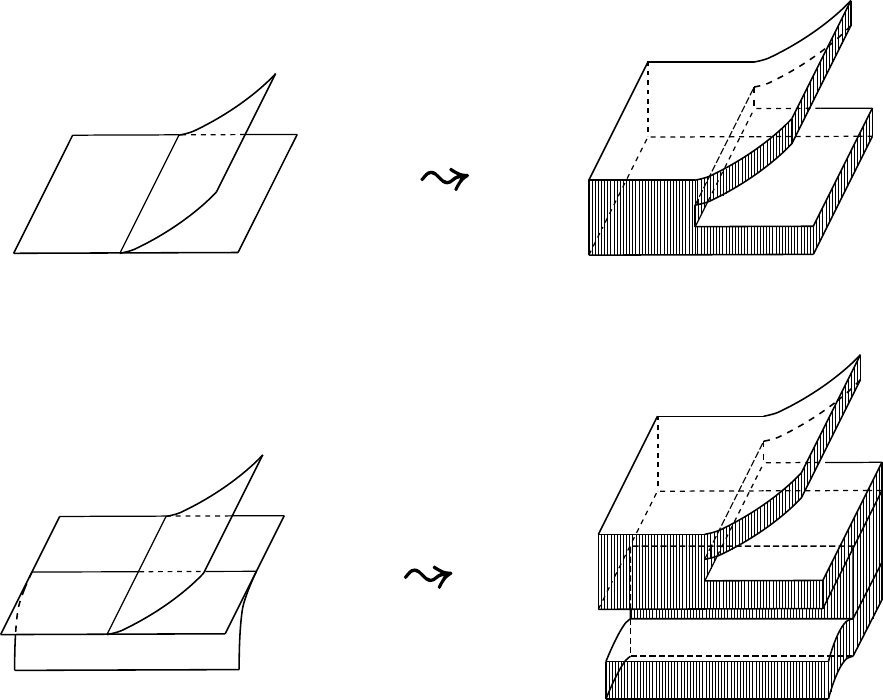}
    \caption{Regular neighbourhood of a branched surface.}
    \label{regular neighbourhood}
\end{figure}
The boundary of $N_B$ decomposes naturally into the union of three compact subsurfaces $\partial_h N_B$, $\partial_v N_B$ and $N_B\cap \partial M$. We call $\partial_h N_B$ the \textbf{horizontal boundary} of $N_B$ and $\partial_v N_B$ the  \textbf{vertical boundary} of $N_B$. The horizontal boundary is transverse to the interval fibers of $N_B$ while the vertical boundary intersects, if at all, the fibers of $N_B$ in one or two proper closed subintervals contained in their interior. If we collapse each interval fiber of $N_B$ to a point, we obtain a branched surface in $M$ that is isotopic to $B$, and the image of $\partial_v N_B$ coincides with the branch locus of such a branched surface.  

We also recall the definition of \emph{splitting}\footnote{This operation is referred to as \emph{restriction} in \cite{O}}.

\begin{definition}
Given two branched surfaces $B_1$ and $B_2$ in $M$ we say that $B_2$ is obtained by \textbf{splitting} $B_1$ if $N_{B_1}$ can be obtained as $N_{B_2}\cup J$, where $J$ is a $[0,1]$-bundle such that $\partial_h J\subset \partial_h N_{B_2}$, $\partial_v J\cap \partial N_{B_2}\subset \partial_v N_{B_2}$ and $\partial J$ meets $\partial N_{B_2}$ so that the fibers agree.
\end{definition}
Figure \ref{splitting} shows two examples of splittings, depicted for the case of $1$-dimensional branched manifolds, i.e. train tracks. 

\begin{figure}[H]
    \centering
    \includegraphics[width=0.6\textwidth]{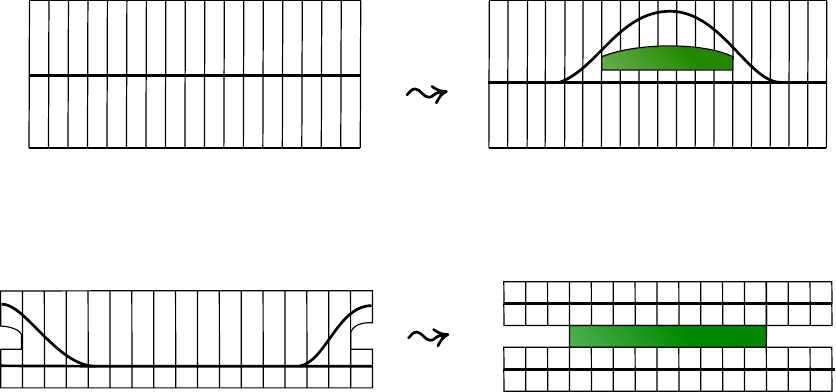}
    \caption{Some examples of splittings. The coloured region is the interval bundle $J$.}
    \label{splitting}
\end{figure}

Branched surfaces provide a useful tool to construct \emph{laminations} on $3$-manifolds. 
\begin{definition}\label{lamination}(see for example \cite{GO})
Let $B$ be a branched surface in a $3$-manifold $M$. A \textbf{lamination carried by B} is a closed subset $\Lambda$ of some regular neighbourhood $N_B$ of $B$ such that $\Lambda$ is a disjoint union of smoothly injectively immersed surfaces, called leaves, that intersect the fibers of $N_B$ transversely. We say that $\Lambda$ is \textbf{fully carried} by $B$ if $\Lambda$ is carried by $B$ and intersects \emph{every} fiber of $N_B$.
\end{definition}

\begin{remark}
Analogously as in Definition \ref{lamination}, if $S$ is a closed oriented surface and $\tau$ is a train track in $S$ we can define what is a lamination (fully) carried by $\tau$. In this case we say that an oriented simple closed curve $\gamma$ is \textbf{realised} by $\tau$ if $\tau$ fully carries a union of finitely many disjoint curves that are parallel to $\gamma$ inside $S$.
\end{remark}

In \cite{L}, Li introduces the notion of \emph{sink disc}.

\begin{definition}
Let $B$ be a branched surface in $M$ and let $S$ be a branch sector in $B$.
We say that $S$ is a \textbf{sink disc} if $S$ is a disc, $S\cap \partial M=\emptyset$ and the branch direction of any smooth curve or arc in its boundary points into $S$.
We say that $S$ is a \textbf{half sink disc} if $S$ is a disc, $S  \cap \partial M\neq \emptyset$ and the branch direction of any smooth arc in $\partial S\setminus \partial M$ points into $S$.
\end{definition}

In Figure \ref{discs}  some examples of sink discs and half sink discs are depicted. The bolded lines represent the intersection of the branched surface with $\partial M$. Notice that if $S$ is a half sink disc the intersection $\partial S \cap \partial M$ can also be disconnected.

\begin{figure}[H]
    \centering
    \includegraphics[width=0.5\textwidth]{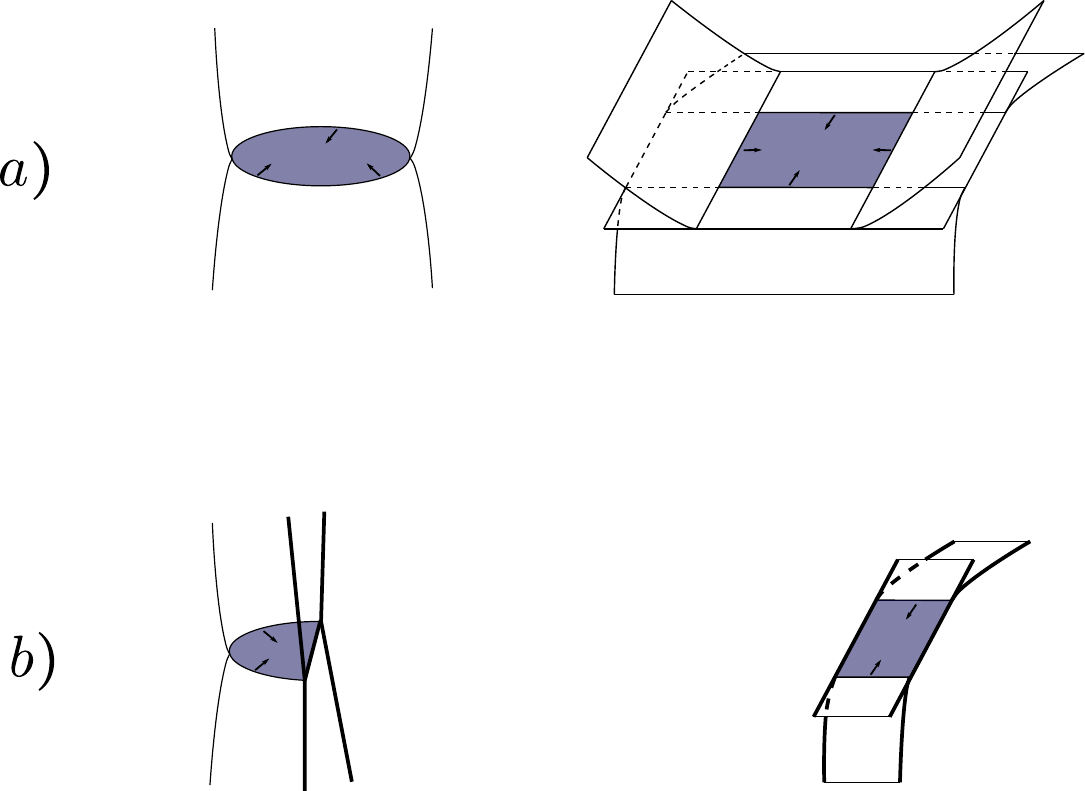}
    \caption{Examples of $a)$ sink discs and $b)$ half sink discs.}
    \label{discs}
\end{figure}

If $B$ contains a sink disc or a half sink disc there is a very simple way to eliminate it, namely it is enough to blow an air bubble in its interior, as in the following figure.

\begin{figure}[H]
    \centering
    \includegraphics[width=0.6\textwidth]{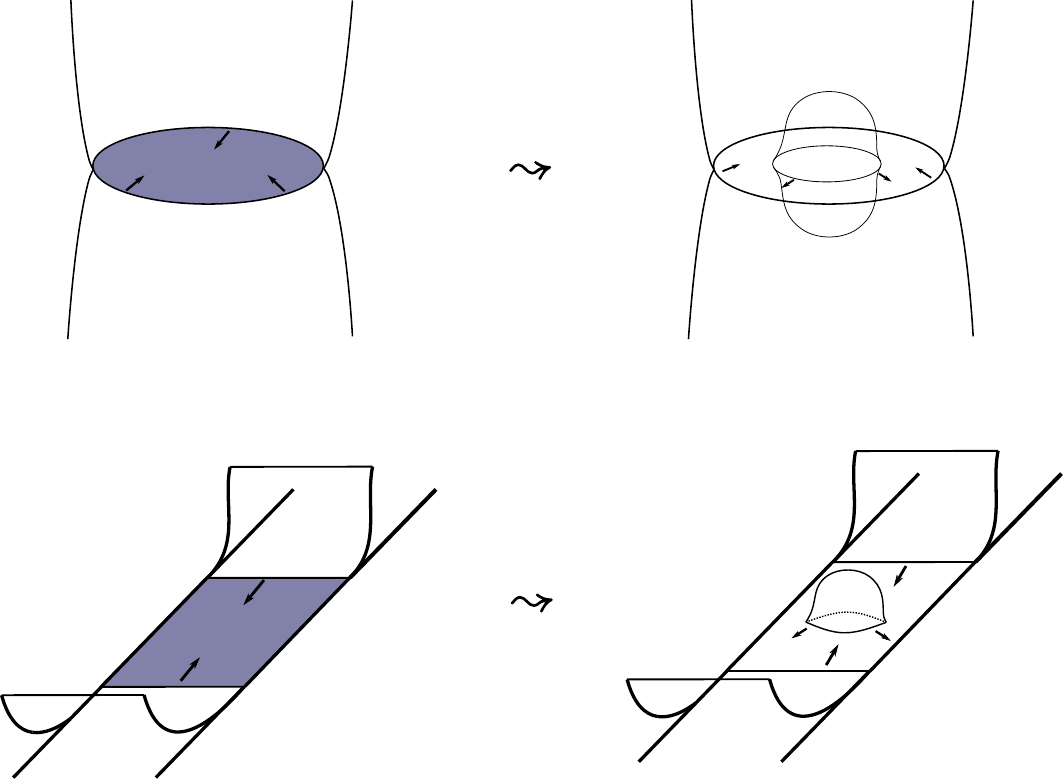}
    \caption{How to eliminate a sink disc or a half sink disc by blowing an air bubble.}
    \label{air bubble}
\end{figure}

We want to avoid this situation. We say that a connected component of $M\setminus {\rm int}(N_B)$ is a $\mathbf{\D^2\times [0,1]}$ \textbf{region} if it is homeomorphic to a ball and its boundary can be subdivided into an annular region, corresponding to a component of $\partial_v N_B$, and two $\D^2$ regions corresponding to components of $\partial_h N_B$. We say that a $\D^2 \times [0,1]$ region is \textbf{trivial} if the map collapsing the fibers of $N_B$ is injective on ${\rm int}(\D^2)\times \{0,1\}$. In this case the image of $\D^2\times \{0,1\}$ via the collapsing map is called a \textbf{trivial bubble} in $B$. Trivial bubbles and trivial $\D^2\times [0,1]$ regions are created when we eliminate sink discs as in Figure \ref{air bubble}.

When $M$ and $B$ have boundary these definitions generalise straightforwardly to the relative case, see \cite{L2}.

In \cite{L}, Li introduces the definition of laminar branched surface and shows that laminar branched surfaces fully carry essential laminations\footnote{For the definition of essential lamination see \cite{GO}, but we will not need their properties for our purposes.}.
In \cite{L2} he generalises this definition to branched surfaces with boundary as follows:

\begin{definition}[\cite{L}]\label{laminar}
Let $B$ be a branched surface in a $3$-manifold $M$. We say that $B$ is \textbf{laminar} if $B$ has no trivial bubbles and the following hold:

\begin{enumerate}
    \item $\partial_h N_B$ is incompressible and $\partial$-incompressible in $M\setminus {\rm int}(N_B)$, and no component of $\partial_h N_B$ is a sphere or a properly embedded disc in $M$;
    \item there is no monogon in $M\setminus {\rm int}(N_B)$, i.e. no disc $D\subset M\setminus {\rm int}(N_B)$ such that $\partial D=D\cap N_B=\alpha\cup \beta$, where $\alpha$ is in an interval fiber of $\partial_v N_B$ and $\beta$ is an arc in $\partial_h N_B$;
    \item $M\setminus {\rm int}(N_B)$ is irreducible and $\partial M\setminus {\rm int}(N_B)$ is incompressible in $M\setminus {\rm int}(N_B)$;
    \item $B$ contains no Reeb branched surfaces (see \cite{GO} for the definition);
    \item $B$ has no sink discs or half sink discs.
\end{enumerate}
\end{definition}
Since $\partial_h N_B$ is not properly embedded in $M\setminus {\rm int}(N_B)$ we explain more precisely the request of $\partial$-incompressibility in $1.$ : we require that if $D$ is a disc in $M\setminus {\rm int}(N_B)$ with ${\rm int}(D)\subset M\setminus N_B$ and $\partial D=\alpha\cup \beta$ where $\alpha$ is an arc in $\partial_h N_B$ and $\beta$ is an arc in $\partial M$, then there is a disc $D'\subset \partial_h N_B$ with $\partial D'=\alpha\cup \beta'$ where $\beta'=\partial D'\cap \partial M$.

The following theorem of \cite{L2} will be used profusely in this section.

\begin{theorem}\cite{L2}\label{boundary train tracks}
Let $M$ be an irreducible and orientable $3$-manifold whose boundary is union of $k$ incompressible tori $T_1,\cdots, T_k$. Suppose that $B$ is a laminar branched surface in $M$ such that $\partial M\setminus \partial B$ is a union of bigons. Then for any multislope $(s_1,\dots, s_k)\in \overline{\Q}^k$ that is realised by the train track $\partial B$, if $B$ does not carry a torus that bounds a solid torus in $M(s_1,\dots,s_k)$, there exists an essential lamination $\Lambda$ in $M$ fully carried by $B$ that intersects $\partial M$ in parallel simple curves of multislope $(s_1,\dots, s_k)$. Moreover this lamination extends to an essential lamination of the filled manifold $M(s_1,\dots, s_k)$.
\end{theorem}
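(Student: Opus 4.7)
The plan is to promote the boundary multislope into a fully carried essential lamination via a weight system on $B$, following the strategy of Li \cite{L2}. Since the multislope $(s_1,\dots,s_k)$ is realised by $\partial B$, there is a positive integer weight system on the branches of the train track $\partial B$ satisfying the switch equations and producing parallel simple closed curves of the prescribed slopes on each torus $T_i$. The central task is to extend this boundary assignment to a non-negative weight system on the branch sectors of $B$ satisfying the branch equations along every arc of the branch locus. Any such weight system, together with iterated splittings of $B$, canonically gives rise to a measured lamination $\Lambda$ fully carried by $B$ and meeting each $T_i$ in parallel curves of slope $s_i$.

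The key, and hardest, step is the extension of the boundary weights to the interior. The branch equations form a linear system in which, at each smooth arc of the branch locus, the weight of the sector on the merged side equals the sum of the weights of the two sectors on the split side. The hypothesis that $B$ contains no sink discs and no half sink discs is exactly what makes this extension possible: a sink disc $S$ would force its weight to equal a strictly larger sum along each smooth arc of $\partial S$, and iterating this inequality produces an unbounded inflation that rules out a finite consistent solution. Conversely, the absence of sink and half sink discs allows one to perform a finite sequence of controlled splittings of $B$, starting from the prescribed boundary weights, until one reaches a branched surface $B'\subset N_B$ carrying positive weights on every sector and having boundary slopes $(s_1,\dots,s_k)$. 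This is the technical heart of Theorem \ref{boundary train tracks}, and where I would expect to spend most of the effort.

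Once $\Lambda$ has been produced, essentiality in $M$ is verified directly against the Gabai--Oertel definition \cite{GO} using the remaining laminar conditions: incompressibility and $\partial$-incompressibility of $\partial_h N_B$ rule out compressing and boundary-compressing discs for leaves; the no-monogon condition in $M\setminus \mathrm{int}(N_B)$ forbids end compressions; irreducibility of $M\setminus \mathrm{int}(N_B)$ excludes sphere leaves bounding balls; and the exclusion of Reeb branched surfaces prevents Reeb components. The additional assumption that $B$ does not carry a torus bounding a solid torus in $M(s_1,\dots,s_k)$ removes the one remaining inessential possibility, namely a toral leaf that becomes compressible only after filling.

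Finally, to extend $\Lambda$ to an essential lamination of $M(s_1,\dots,s_k)$, note that $\Lambda\cap T_i$ is a family of parallel simple closed curves of slope $s_i$, which bound meridian discs in the solid torus $V_i$ glued along $T_i$. Inside each $V_i$ one fills the complementary regions of $\Lambda \cap T_i$ by a family of parallel meridian discs, or, in the complementary annular regions between leaves, by leaves spiralling onto $\Lambda\cap T_i$; the result is a lamination of $V_i$ whose trace on $T_i$ agrees with $\Lambda\cap T_i$. Gluing yields the desired lamination of $M(s_1,\dots,s_k)$, and essentiality is preserved because no toral leaf of $\Lambda$ becomes inessential under the filling, by the hypothesis of the theorem.
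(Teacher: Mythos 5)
This theorem is quoted from \cite{L2}; the paper does not reprove it but only records, in the remark following the statement, the idea of Li's argument: one splits $B$ \emph{only in a collar of $\partial M$} so that $\partial B$ becomes a union of parallel circles of slopes $s_i$, glues the meridian discs of the filling solid tori to obtain a closed branched surface $B(s_1,\dots,s_k)$ in $M(s_1,\dots,s_k)$, checks (using the bigon hypothesis) that this branched surface is still laminar, and then invokes \cite[Theorem~1]{L}, which produces a fully carried essential lamination by an infinite splitting process. Your last step (capping with meridian discs) and your reading of the role of the hypothesis on tori bounding solid tori are consistent with this.

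However, your central step contains a genuine gap. You propose to extend the positive boundary weight system on $\partial B$ to a non-negative weight system on all branch sectors of $B$ satisfying the branch equations, and you assert that the absence of sink discs and half sink discs ``is exactly what makes this extension possible.'' That is not true: no-sink-disc is not a solvability criterion for the branch equations with prescribed boundary values, and in general a laminar branched surface does \emph{not} fully carry a measured lamination with the desired boundary multislope. The branched surfaces to which this theorem is applied in the present paper are explicit counterexamples: in Lemma \ref{B is laminar} it is shown that the only non-negative integral weight systems on $B$ are multiples of the one induced by the fiber $S$ (all interior weights equal, all disc weights zero). Since the cone of non-negative real solutions of the branch equations is a rational polyhedral cone, the same holds for real weight systems, so the only boundary multislope attainable by a weight system on $B$ is $(0,\dots,0)$; yet the theorem is applied to produce laminations of multislopes filling out open regions of $\overline{\Q}^k$. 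The laminations furnished by \cite[Theorem~1]{L} are genuinely non-measured, and they are obtained by splitting $B$ infinitely often, not by solving a finite linear system. So your ``technical heart'' would fail exactly where it is needed; the correct route is to use the boundary realization only to split $B$ near $\partial M$, and to delegate the construction of the interior lamination to the laminar branched surface theorem of \cite{L}.
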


\begin{remark}
The statement of the Theorem \ref{boundary train tracks} is slightly more detailed than the version of \cite{L2}. The details we have added come from the proof of Theorem \ref{boundary train tracks}. In fact the idea of the proof is to split the branched surface $B$ in a neighbourhood of $\partial M$ so that it intersects $T_i$ in parallel simple closed curves of slopes $s_i$, for $i=1,\dots k$. In this way, when gluing the solid tori, we can glue  meridional discs of these tori to $B$ so to obtain a branched surface $B(s_1,\dots, s_k)$ in $M(s_1,\dots, s_k)$ that is laminar and that therefore by virtue of \cite[Theorem~1]{L} fully carries an essential lamination. In particular, this essential lamination is obtained by gluing the meridional discs of the solid tori to an essential lamination in $M$ that intersects $T_i$ in parallel simple closed curves of slopes $s_i$, for $i=1,\dots, k$.
\end{remark}

\begin{remark}
In \cite{L2} the statement of the theorem is given for $M$ with connected boundary, but as already noticed in \cite{KR1} if $M$ has multiple boundary components we can split $B$ in a neighbourhood of each boundary tori $T_i$ and the same proof of \cite{L2} works.
\end{remark}

\subsection{Constructing taut foliations}\label{Constructing taut foliations}

The aim of this subsection is to prove Theorem \ref{teorema foliazioni}, that concerns the existence of taut foliations on Dehn fillings on manifolds that fiber over the circle with fiber a $k$-holed torus and with some prescribed monodromy. To do this we will recall a very simple, yet useful, way to build branched surfaces.
First of all we fix some notations and recall the definition of fibered link.
\\

Given an oriented surface $S$ with (possibly empty) boundary and $h:S\rightarrow S$ an orientation preserving homeomorphism of $S$ fixing $\partial S$ pointwise we denote with $M_h$ the mapping torus of $h$
$$
M_h=\frac{S\times [0,1]}{(h(x),0)\sim (x,1)}.
$$
We orient $S\times [0,1]$ as a product and we orient $M_h$ with the orientation induced by $S\times [0,1]$. We also identify $S$ with its image in $M_h$ via the map
\begin{align*}
S&\rightarrow S\times \{0\}\subset M_h\\
x&\mapsto (x,0). 
\end{align*}
The homeomorphism $h$ is called the \textbf{monodromy} of $M_h$.

\begin{definition}
Let $L$ be an oriented link in $S^3$. We say that $L$ is \textbf{fibered} if there exists a Seifert surface $S$ for $L$, an orientation preserving homeomorphism $h$ of $S$ fixing $\partial S$ pointwise and an orientation preserving homeomorphism
$$
\chi: S^3\setminus {\rm int}(N_L) \rightarrow M_h,
$$
where $N_L$ denotes a tubular neighbourhood of $L$ in $S^3$, so that 
\begin{itemize}
    \item $\chi_{|S}$ is the inclusion $S\subset M_h$;
    \item $\chi(m_i)=\{x_i\}\times [0,1]$, where $m_i$ is a meridian for the $i$-th component of $L$ and $x_i\in \partial S$ is a point.
\end{itemize}
\end{definition}

We want to apply Theorem \ref{boundary train tracks} to construct laminations on $S^3_{\sfrac{p_1}{q_1}, \sfrac{p_2}{q_2}}({\rm WL})$ when  $\sfrac{p_i}{q_i}< 1$ for at least one $i\in \{1,2\}$ and then promote them to taut foliations. To do this we will define some branched surfaces in the exterior of the Whitehead link and then study their boundary train tracks.

The construction of these branched surfaces relies on the fact that the Whitehead link is a fibered link.
In fact, as Figure \ref{hopf plumbing} shows, the Whitehead link can be obtained as the boundary of a surface $F$ that is a torus with two open discs removed. This torus is obtained by a sequence of three Hopf plumbings and this implies by standard results (see \cite{G1,S1}) that $F$ is a fiber surface for ${\rm WL}$ with monodromy $h$ given by $h=\tau_0\tau_1\tau_2^{-1}$, where $\tau_i$ denotes the positive Dehn twist along the curve $\gamma_i$ and where the factorisation of $h$ should be read from right to left.

\begin{figure}[H]
    \centering
    \includegraphics[width=0.7\textwidth]{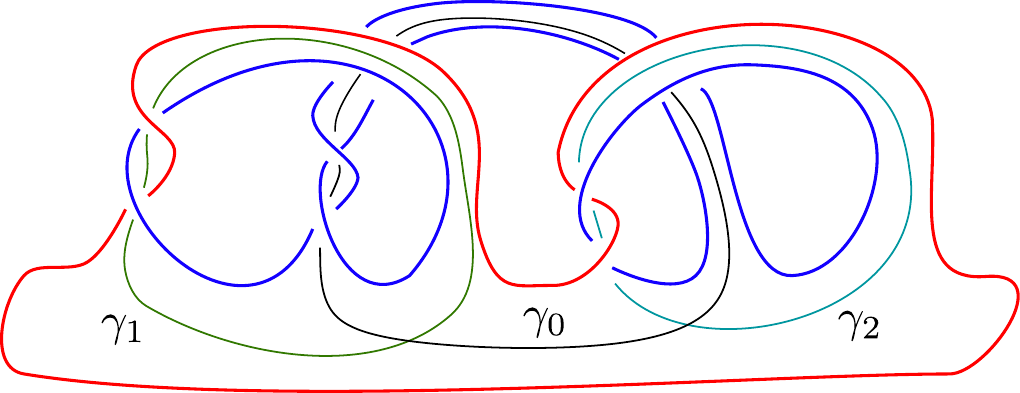}
    \caption{ The Whitehead link is the union of the red curve and the blue curve. The curves $\gamma_0,\gamma_1,\gamma_2$ lie on the Seifert surface that should be evident from the picture.}
    \label{hopf plumbing}
\end{figure}

Before focusing on the specific example of the Whitehead link exterior, we work in a more general setting. 

Let $S$ be an oriented surface with boundary and let $h$ be an orientation preserving homeomorphism of $S$ fixing $\partial S$ pointwise.
We consider some pairwise disjoint properly embedded arcs $\alpha_1,\dots, \alpha_k$ in $S$ and the discs $\overline{D}_i=\alpha_{i}\times[0,1]\subset S\times [0,1]$. Each of these discs has a ``bottom'' boundary, $\alpha_i\times \{0\}$, and a ``top'' boundary, $\alpha_i\times \{1\}$. When we consider the images of these discs in $M_h$ under the projection map
$$
S\times [0,1]\rightarrow M_h
$$
we have that the bottom and top boundaries become respectively $\cup_{i}\alpha_i\subset S$ and $\cup_{i}h(\alpha_i)\subset~S$.

We can isotope simultaneously the discs $\overline{D}_i$'s in a neighbourhood of $S\times \{1\}\subset S\times [0,1]$ so that when projected to $M_h$ their top boundaries define a family of arcs $\{\smash{\widetilde{h(\alpha_i)}}\}_{i=1,\dots k}$ in $S$ such that for each $i,j\in \{1,\dots, k\}$ the intersection between $\alpha_i$ and $\smash{\widetilde{h(\alpha_j)}}$ is transverse and minimal. Notice that each arc $\smash{\widetilde{h(\alpha_i)}}$ is isotopic as a properly embedded arc to $h(\alpha_i)$. We also denote with $D_i$ the projected perturbed disc contained in $M_h$.
\newline

If we assign (co)orientations to these discs, since $S$ is (co)oriented, we can smoothen $S\cup D_1 \cup \cdots \cup D_k$ to a branched surface $B$ by imposing that the smoothing preserves the coorientation of $S$ and of the discs. In particular, each disc has two possible coorientations and therefore it can be smoothed in two differents ways. This operation is demonstrated in Figure \ref{smoothings}, where $S$ is a torus with an open disc removed.

\begin{figure}[H]
    \centering
    \includegraphics[width=0.9\textwidth]{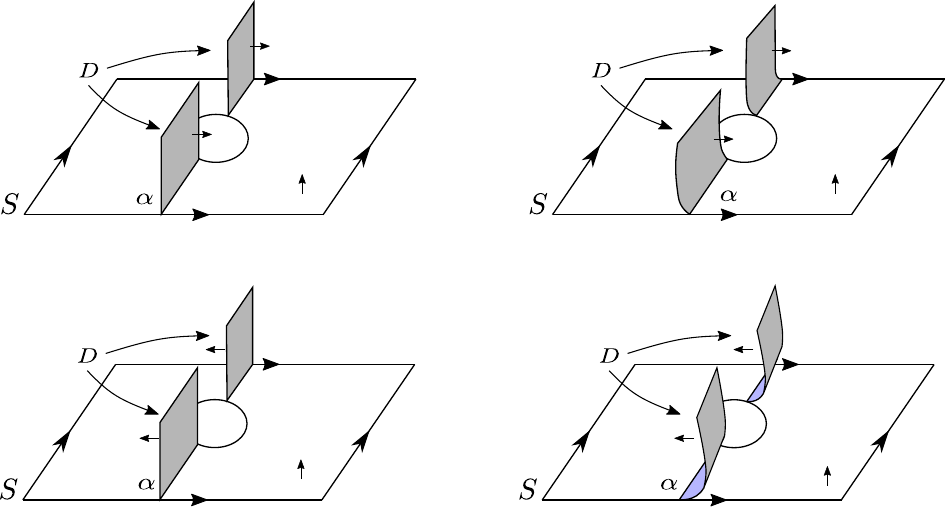}
    \caption{How to smoothen $S\cup D$ according to the coorientations.}
    \label{smoothings}
\end{figure}

We prove the following lemma, that is only implicit in \cite{KR1}.
\begin{lemma}\label{lemma laminar}
Let $S$ be a connected and oriented surface with boundary and let $h$ be an orientation preserving homeomorphism of $S$ fixing $\partial S$ pointwise. Let $\{\alpha_i\}_{i=1,\dots, k}\subset S$ be pairwise disjoint properly embedded arcs in $S$ and suppose that $S\setminus \cup_{i=1}^k{\alpha_i}$ has no disc components. Denote with $D_i$'s the discs in $M_h$ associated to the arcs $\alpha_i$'s in the way described above and fix a coorientation for these discs. Let $B=S\cup D_1\cup \cdots \cup D_k$ denote the branched surface in $M_h$ obtained by smoothing according to these coorientations. Then $B$ has no trivial bubbles and satisfies conditions $1,2$,$3$ and $4$ of Definition \ref{laminar}.

\end{lemma}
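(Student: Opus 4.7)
The plan is to first analyse the connected components of $M_h \setminus {\rm int}(N_B)$, showing that each is a \emph{prism} of the form $\Sigma_j \times [0,1]$ for some component $\Sigma_j$ of $S \setminus \cup_i \alpha_i$, and then to read off the required conditions from this product structure.

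First I would describe $B$ explicitly: its branch locus is the union $\bigcup_i \alpha_i \cup \bigcup_i \widetilde{h(\alpha_i)}$ sitting inside $S$, and its branch sectors are the $k$ discs $D_i$ together with the connected components of $S \setminus \big{(}\bigcup_i \alpha_i \cup \bigcup_i \widetilde{h(\alpha_i)}\big{)}$. The key observation is that $M_h \setminus B$ fibers over $(0,1)$ via the $t$-coordinate coming from the presentation $M_h = (S \times [0,1])/\sim$: the open submanifold $M_h \setminus S$ is canonically $S \times (0,1)$, the cross-section of $\cup_i D_i$ at height $t$ consists of $k$ pairwise disjoint embedded arcs $\gamma_i(t)$ depending continuously on $t$, and the cut surfaces $S \setminus \cup_i \gamma_i(t)$ all share a single topological type because the $\gamma_i(t)$ are obtained from $\alpha_1,\dots,\alpha_k$ by an ambient isotopy of configurations. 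Since the base $(0,1)$ is contractible this bundle is trivial, so each connected component of $M_h \setminus B$ is homeomorphic to $\Sigma_j \times (0,1)$ for some $j$. Taking closures, every connected component of $M_h \setminus {\rm int}(N_B)$ is a prism $\Sigma_j \times [0,1]$ with $\Sigma_j \times \{0,1\} \subset \partial_h N_B$, the part of $\partial \Sigma_j$ lying on $\partial S$ contributing to $\partial M_h$, and the part lying on the branch arcs contributing to $\partial_v N_B$. By hypothesis no $\Sigma_j$ is a disc and, having non-empty boundary, neither is any $\Sigma_j$ a sphere.

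With this prism picture the remaining properties are routine. No trivial bubbles arise because no complementary region is a $\D^2 \times [0,1]$. For condition $1$, $\partial_h N_B$ intersects each prism in two disjoint copies of $\Sigma_j$, and the usual $\pi_1$-injectivity of $\Sigma_j$ inside $\Sigma_j \times [0,1]$ yields both incompressibility and $\partial$-incompressibility, with no copy being a sphere or disc. For condition $2$, a monogon would require an arc of $\partial_h N_B = \Sigma_j \times \{0,1\}$ joining the two endpoints of some interval fiber $\{p\} \times [0,1]$, which is impossible because these endpoints lie in different components. For condition $3$, each prism is aspherical and hence irreducible, and $\partial M_h$ meets each prism in a union of rectangles and annuli whose core curves are non-trivial in $\pi_1(\Sigma_j)$ (otherwise $\Sigma_j$ would be a disc), so $\partial M_h \setminus {\rm int}(N_B)$ is incompressible. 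Finally, a Reeb sub-branched-surface would have to bound a solid torus complementary region, but no prism $\Sigma_j \times [0,1]$ with $\Sigma_j$ non-disc is a solid torus, giving condition $4$.

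The main technical step, and where I expect the real work to lie, is establishing the fibered description of $M_h \setminus B$ rigorously: one must check that after the simultaneous perturbation of the $D_i$ near $S \times \{1\}$ the slices $D_i \cap \{t = t_0\}$ form a continuously varying family of $k$ pairwise disjoint properly embedded arcs in $S$, and that the resulting cut surfaces keep a constant topological type as $t_0$ ranges in $(0,1)$. Once this product-structure statement is in place all the listed properties follow by elementary inspection inside $\Sigma_j \times [0,1]$, and the hypothesis that $S \setminus \cup_i \alpha_i$ has no disc components is used precisely to exclude the degenerate prism $\D^2 \times [0,1]$ which would violate each one of them.
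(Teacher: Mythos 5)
Your overall strategy is the same as the paper's: identify each complementary region of $N_B$ as a product $\Sigma_j\times[0,1]$ over a component $\Sigma_j$ of $S\setminus\cup_i\alpha_i$ (the paper phrases this as $M_h\setminus{\rm int}(N_B)\cong S'\times[0,1]$ with $S'=S\setminus\cup_i{\rm int}(N_{\alpha_i})$, obtained simply by cutting $S\times[0,1]$ along the product discs rather than via a fibration over $(0,1)$, but the content is identical), and then read off conditions $1$--$3$ and the absence of trivial bubbles from this product structure together with the hypothesis that no $\Sigma_j$ is a disc. Your treatments of the trivial bubbles, of condition $1$, of condition $2$ (where the paper instead invokes the coorientation of $B$; your observation that the two endpoints of an interval fiber of $\partial_v N_B$ lie in the distinct components $\Sigma_j\times\{0\}$ and $\Sigma_j\times\{1\}$ of the horizontal boundary of the prism is an equally valid route), and of condition $3$ are correct.

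The argument for condition $4$, however, does not work as written. First, a Reeb branched surface is not characterised by ``bounding a solid torus complementary region'': it is a branched surface contained in a solid torus $V$, and the relevant consequence (the fact from \cite{GO} that the paper uses) is that its complement in $V$ is a $\D^2\times[0,1]$ region. Second, and more seriously, your supporting claim that no prism $\Sigma_j\times[0,1]$ with $\Sigma_j$ non-disc is a solid torus is false: if $\Sigma_j$ is an annulus then $\Sigma_j\times[0,1]$ \emph{is} a solid torus, and annular components of $S\setminus\cup_i\alpha_i$ are not excluded by the hypotheses (they actually occur for the branched surfaces used later in the paper). The correct argument is the one already implicit in your treatment of trivial bubbles: a Reeb branched surface forces the existence of a $\D^2\times[0,1]$ complementary region, and since no $\Sigma_j$ is a disc there are no such regions. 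With condition $4$ repaired in this way the proof is complete.
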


\begin{proof}
We denote the mapping torus $M_h$ with $M$. We fix for each arc $\alpha_i$ a tubular neighbourhood $N_{\alpha_i}$ in $S$ and we denote with $S'$ the surface $S\setminus \cup_{i=1}^k {\rm int}(N_{\alpha_i})$. The first observation is that by construction we have
$$
M\setminus {\rm int}(N_B) \cong S'\times [0,1]
$$
with a homeomorphism that identifies
$$
\partial_h N_B=S'\times \{0,1\}.
$$
and 
$$
\partial_v N_B=\partial'S'\times [0,1]
$$
where $\partial'S'$ denotes the closure of $\partial S'\setminus \partial M$.

Basically, the proof follows from the fact that  $M\setminus  {\rm int}(N_B)$ is homeomorphic to $S'\times [0,1]$ and that $S'$ has no discs components.

First of all, we notice that since by hypothesis $S\setminus \cup_{i=1}^k{\alpha_i}$ has no discs components, there are no $\D^2\times [0,1]$ regions in $M\setminus  {\rm int}(N_B)$ and in particular no trivial bubbles.
We now verify that conditions $1-4$ of Definition \ref{laminar} hold.

\begin{enumerate}
    \item \begin{itemize}
        \item \emph{The horizontal boundary $\partial_h N_B$ is incompressible in $M\setminus {\rm int}(N_B)$:} this follows from the fact that the inclusions of $S'\times \{0\}$ and $S'\times \{1\}$ in $M\setminus {\rm int}(N_B)$ are homotopy equivalences. In particular, if a simple closed curve in $\partial_h N_B$ bounds a disc in $M\setminus {\rm int}(N_B)$ then it must be nullhomotopic in $\partial_h N_B$ and nullhomotopic simple closed curves in surfaces always bound embedded discs.
        \item \emph{The horizontal boundary $\partial_h N_B$ is $\partial$-incompressible in $M\setminus {\rm int}(N_B)$:}
        suppose that there is a disc $\Delta\subset M\setminus {\rm int}(N_B)$ such that ${\rm int}(\Delta)\subset M\setminus N_B$ and $\partial \Delta=a\cup b$, where $a$ is an arc in $\partial_h N_B$ and $b=\partial \Delta\cap \partial M$. 
        We have to find a disc $\Delta'\subset \partial_h N_B$ with $\partial \Delta'=a\cup b'$ where $b'=\partial \Delta'\cap \partial M$.
        
        Without loss of generality we can suppose that $a\subset S'\times \{0\}$. The arc $b$ is an arc in $\partial M\setminus {\rm int}(N_B)$ with both endpoints in $S'\times \{0\}$ and since the connected components of $\partial M\setminus {\rm int}(N_B)$ are either discs or annuli, there exists a homotopy in $\partial M\setminus {\rm int}(N_B)$, relative to the boundary, from the arc $b$ to an arc $b'\subset (S'\times \{0\})\cap \partial M$. In particular since the simple closed curve $a\cup b$ is nullhomotopic in $M\setminus {\rm int}(N_B)$, the curve $a\cup b'$ is nullhomotopic as well.
        \newline
        To conclude it is enough to observe that since the inclusion of $S'\times \{0\}$ in $M\setminus {\rm int}(N_B)$ is a homotopy equivalence, the simple closed curve $a\cup b'$ bounds a disc $\Delta'$ in $S'\times \{0\}$.

    \item \emph{No component of the horizontal boundary is a sphere or a properly embedded disc:} this follows by our hypotheses.
    \end{itemize}

    \item \emph{there is no monogon in $M\setminus {\rm int}(N_B)$:}
    this is a consequence of the fact that the branched surface $B$ admits a coorientation.
    
    \item 
    \begin{itemize}
        \item \emph{$M\setminus {\rm int} (N_B)$ is irreducible:} this is a consequence of the fact that each component of $M\setminus {\rm int} (N_B)$ is the product of a surface with boundary with $[0,1]$.
        \item \emph{$\partial M\setminus {\rm int}(N_B)$ is incompressible in $M\setminus {\rm int}(N_B)$:} consider any boundary component $T$ of $M$. By construction $T\setminus {\rm int}(N_B)$ is a union of discs or an annulus (in case there are no endpoints of the arcs $\alpha_i$ on $T$). In the former case, $T\setminus {\rm int}(N_B)$ is obviously incompressible in $M\setminus N_B$, while in the latter it is compressible if and only if it is the boundary of $S'\times [0,1]$ and $S'\times [0,1]$ is diffeomorphic to $\D^2\times [0,1]$, but this would contradict our hypotheses.
    \end{itemize} 
    
    \item \emph{$B$ contains no Reeb branched surfaces:} the presence of a Reeb branched surface would imply that some of the complementary regions of ${\rm int}(N_B)$ are $\D^2\times [0,1]$ regions (see \cite{GO}) and we have already observed that there are no such regions.
\end{enumerate}
The proof is complete.
\end{proof}

All the branched surfaces we will use are obtained with the previous construction. One problem that one has to face is that there could be sink discs in such branched surfaces. In \cite{KR1} the authors present a useful procedure to build splittings of branched surfaces constructed in this way that are without sink discs, and part of the results that we are going to obtain can also be proved with the methods there presented (see Remark \ref{remark kalelkar roberts}). However to be able to construct taut foliations on all the manifolds of the statement of Theorem \ref{theorem part 2} we will need to find a way to build our branched surfaces that is slightly different from the one presented in \cite{KR1}.
\\

We fix some notation. We suppose that $S$ is a torus with $k$ open discs removed. We consider the curves $\gamma_0, \gamma_1, \dots, \gamma_{k}$ and we label the boundary components of $S$ with numbers in $\{1,\dots, k\}$ as in Figure \ref{torus_k}. We also orient $S$ so that the orientation induced on the boundary components is the one of the figure.

\begin{figure}[H]
   \centering
    \includegraphics[width=0.45\textwidth]{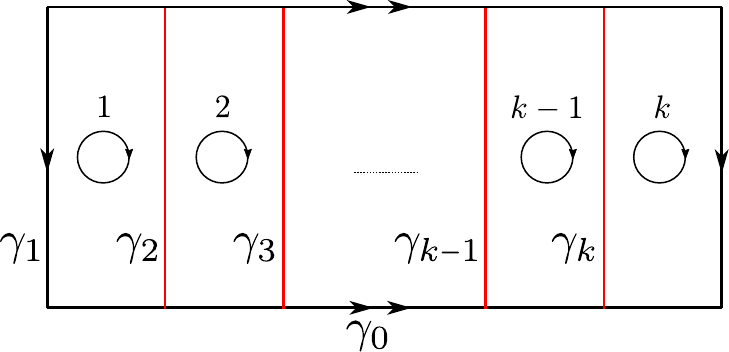}
    \caption{The oriented torus $S$ with the labelled boundary components.}
    \label{torus_k}
\end{figure}

We denote with $\tau_i$ the positive Dehn twist along the curve $\gamma_i$. Notice that since $\gamma_i\cap\gamma_j=\emptyset$ for $i\ne j$ and $i,j \in\{1,\dots, k\}$ we have that $\tau_i\tau_j=\tau_j\tau_i$ for $i,j=1,\dots, k$.

We focus on homeomorphisms of $S$ of the following type: 
$$
h=\tau_0^{a_0}\tau_1^{a_1}\cdots\tau_k^{a_k}\quad a_0\in \Z, \, a_i\in \Z\setminus{\{0\}} \text{ for } i\ne 0
$$
where the factorisation of $h$ should be read from right to left.

We fix the following convention:
\newline

\textbf{Convention:}
\emph{the indices $1,\dots, k$ have to be considered ordered cyclically; so we set $a_{k+1}=a_1$ and think of $a_1$ as consecutive to $a_{k}$.}
\newline

Let $\partial_i S$ denote the boundary component of $S$ labelled with $i$. Given such a homeomorphism $h$ we assign to $\partial_i S$ a label with the following rule:

\begin{itemize}
    \item we assign to $\partial_i S$ the label $p_{+}$ if $a_i$ and $a_{i+1}$ are both positive;
    \item we assign to $\partial_i S$ the label $p_-$ if $a_i$ and $a_{i+1}$ are both negative;
    \item we assign to $\partial_i S$ the label $n$ if $a_i$ and $a_{i+1}$ have different signs.
\end{itemize}
Figure \ref{label} shows an example. Notice that in this example, following our convention, to assign a label to $\partial_3 S$ we have to check the signs of $a_3$ and $a_1$, since $a_1$ is consecutive to $a_3$. Also notice that when $k=1$,  \emph{i.e.} $S$ has only one boundary component we have that $\partial_1 S$ has label $p_+$ when $a_1$ is positive and label $p_-$ when $a_1$ is negative.

\begin{figure}[H]
   \centering
    \includegraphics[width=0.45\textwidth]{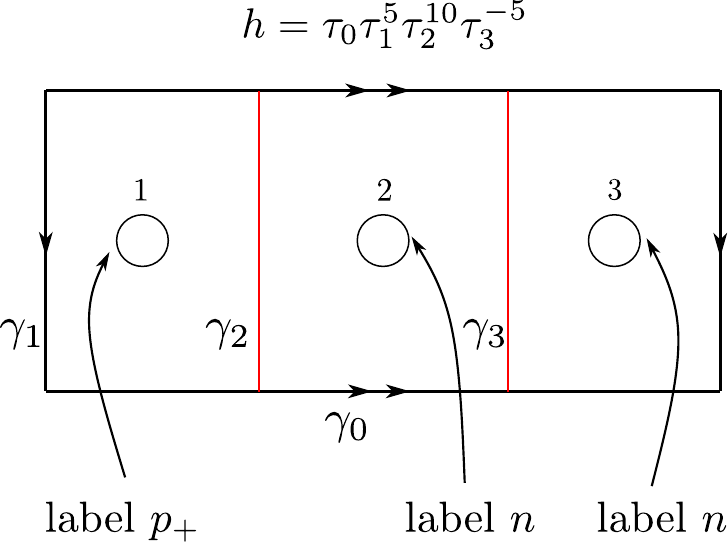}
    \caption{Example with $h=\tau_0\tau_1^5\tau_2^{10}\tau_3^{-5}$.}
    \label{label}
\end{figure}

Finally we assign to each boundary component $\partial_i S$ two intervals $I_i$ and $J_i$ in $\overline{\Q}$ in the following way:

\begin{itemize}
    \item if $\partial_i S$ has label $p_+$ we set $I_i=J_i=(\infty, 1)$;
    \item if $\partial_i S$ has label $p_-$ we set $I_i=J_i=(-1, \infty)$;
    \item let $i_1<\dots< i_{2c}$ the indices of the boundary components labelled with $n$. We set $I_{i_a}=(\infty, 0)$ when $a\in \{1,\dots, 2c\}$ is odd and we set $I_{i_a}=(0,\infty)$ when  $a\in \{1,\dots, 2c\}$ is even.
    
    Therefore we have $I_{i_1}=(\infty, 0)$, $I_{i_2}=(0,\infty)$, $I_{i_3}=(\infty, 0)$ and so on.
    
    On the contrary, we set $J_{i_a}=(0,\infty)$ when $a\in \{1,\dots, 2c\}$ is odd and $J_{i_a}=(\infty,0)$ when $a\in \{1,\dots, 2c\}$ is even.
\end{itemize}

\begin{ex}
In the example of Figure \ref{label} we have 
$$
I_1=J_1=(\infty,1)
$$
$$
I_2=(\infty,0)\quad J_2=(0,\infty)
$$
$$
I_3=(0,\infty)\quad J_3=(\infty,0).
$$

\end{ex}

We are now ready to state the theorem. In the statement of the theorem, for each boundary torus $T_i$ of the manifold $M_h$ we have fixed as longitude the oriented curve $\partial_i S$ and as meridian the image in $M_h$ of the curve $\{x_i\}\times[0,1]$, oriented as $[0,1]$, where $x_i\in \partial_i S$.

\begin{theorem}\label{teorema foliazioni}
Let $S$ be a $k$-holed torus as in Figure \ref{torus_k} and let $h$ be a homeomorphism of $S$ of the following form:
$$
h=\tau_0^{a_0}\tau_1^{a_1}\cdots\tau_{k}^{a_{k}}
$$
where $a_0\in \Z$ and $a_i\in \Z\setminus{\{0\}}$
for $i=1,\dots, k$.
Then: 
\begin{enumerate}
\item if $a_0>0$ (resp. $a_0<0$) then $M_h(s_1,\dots, s_k)$ supports a coorientable taut foliation for each multislope $(s_1,\dots, s_k)\in (\infty, 1)^k$ (resp. $(-1, \infty)^k$);
\item 
for any multislope $(s_1,\dots, s_k)\in (I_1\times\cdots \times I_k)\cup (J_1\times\cdots \times J_k)$, the filled manifold $M_h(s_1,\dots, s_k)$ supports a coorientable taut foliation, where the intervals $I_i$'s and $J_i$'s are the ones described above.

\end{enumerate}
\end{theorem}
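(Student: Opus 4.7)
The plan is to construct, for each admissible multislope, a laminar branched surface $B\subset M_h$ whose boundary train track $\partial B$ realises that multislope, then invoke Theorem \ref{boundary train tracks} to obtain an essential lamination in $M_h$, and finally promote it to a coorientable taut foliation. For each $i\in\{0,1,\ldots,k\}$ with $a_i\ne 0$ I would fix a properly embedded arc $\alpha_i\subset S$ dual to $\gamma_i$, take $|a_i|$ parallel copies, form the product discs $D_{i,j}=\alpha_i\times[0,1]\subset S\times[0,1]$, perturb their tops and project to $M_h$, and then smooth $S\cup\bigcup_{i,j}D_{i,j}$ according to coorientations of the $D_{i,j}$ chosen in agreement with $\mathrm{sign}(a_i)$. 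Because the arcs $\alpha_0,\ldots,\alpha_k$ decompose $S$ into pieces none of which is a disc, Lemma \ref{lemma laminar} gives at once that the resulting branched surface $B$ has no trivial bubbles and satisfies conditions $1$--$4$ of Definition \ref{laminar}.

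The substantive work is to verify condition $5$ (no sink or half sink discs) and to compute which slopes $\partial B$ realises. Both questions localise at the boundary tori, since the coorientations of the $D_{i,j}$ prevent sink discs in the interior of $M_h$. On $T_i$ the branch locus is determined by the ends of $\alpha_i$ and $\alpha_{i+1}$ (in the cyclic convention): when $a_i$ and $a_{i+1}$ have the same sign the two families of branch arcs cusp compatibly and the train track $\partial B\cap T_i$ realises the entire interval $(\infty,1)$ or $(-1,\infty)$ (labels $p_+, p_-$); when they have opposite signs (label $n$) the families cusp incompatibly, and the realised slopes are either $(\infty,0)$ or $(0,\infty)$, with the choice governed by the coorientation on the arcs. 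The requirement that no half sink disc appear on $T_i$ couples the choices cyclically: as one passes from one $n$-labelled boundary to the next, the interval must alternate, which is precisely the dichotomy $I_{i_a}$ versus $J_{i_a}$, yielding the two global products $I_1\times\cdots\times I_k$ and $J_1\times\cdots\times J_k$ of part $2$. Part $1$ is the special case in which the coorientation of $D_0$ is pinned by $\mathrm{sign}(a_0)$: this propagates to every $T_i$ and overrides the $n$-label alternation, forcing uniform train tracks $(\infty,1)$ or $(-1,\infty)$ on all boundaries.

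With $B$ laminar and the boundary realisation in hand, Theorem \ref{boundary train tracks} furnishes an essential lamination $\Lambda\subset M_h$ fully carried by $B$ and extending to an essential lamination of $M_h(s_1,\ldots,s_k)$; the no-solid-torus hypothesis is immediate from the coorientability of $B$ and the non-disc decomposition of $S$. To promote $\Lambda$ to a coorientable taut foliation, one uses that the complementary regions of $N_B$ (and of its extension after Dehn filling) are product $I$-bundles over non-disc surfaces, each of which can be foliated by its trivial product foliation; the resulting foliation inherits coorientability from $B$ and is taut because the $I$-fibres of $N_B$ supply closed transversals through every leaf. The main obstacle is the sink disc analysis of the middle paragraph: keeping track of the branch directions generated by the interaction between $D_0$ (coming from $\tau_0^{a_0}$) and the $D_{i,j}$ with $i\geq 1$, together with the cyclic coupling of cusp orientations around the $n$-labelled boundaries, is exactly what produces the apparently mysterious alternating intervals $I_{i_a},J_{i_a}$.
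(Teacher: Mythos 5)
Your overall strategy --- build a cooriented branched surface $B=S\cup(\text{product discs over arcs})$, apply Lemma \ref{lemma laminar} and Li's Theorem \ref{boundary train tracks}, then fill the complementary product regions to obtain a taut foliation --- is exactly the paper's. But the construction you actually propose does not work as stated, and the two places where the real content of the theorem lives are asserted rather than proved. First, taking $|a_i|$ parallel copies of an arc dual to each $\gamma_i$ immediately violates the hypothesis of Lemma \ref{lemma laminar}: the band between two parallel copies of the same arc is a disc component of the complement of the arcs in $S$, so the lemma cannot be invoked, and you create $\D^2\times[0,1]$-type complementary regions and disc sectors that are prone to being sinks or trivial bubbles. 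In the paper the exponents $a_i$ are not encoded by multiplicities of discs at all: Part 1 uses exactly $k$ discs over parallel arcs $\alpha_1,\dots,\alpha_k$ that meet $\gamma_0$ but none of $\gamma_1,\dots,\gamma_k$ (so $h(\alpha_i)=\tau_0^{a_0}(\alpha_i)$, and $|a_0|$ appears only in how $h(\alpha_i)$ winds around the annuli obtained by cutting $S$ along the $\alpha_i$'s), while Part 2 uses a different system of $k$ arcs chosen so that $\widetilde{h(\alpha_i)}$ is a fixed ``horizontal'' arc $\beta_i$; the two parts are genuinely different branched surfaces, not one family with the coorientation of a disc over an arc dual to $\gamma_0$ flipped.

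Second, your claim that the sink-disc verification ``localises at the boundary tori, since the coorientations prevent sink discs in the interior'' is false: coorientability rules out monogons, not sink discs, and in the paper's Part 1 the sectors of $B$ lying in the interior of $S$ are honest discs ($k|a_0|$ of them in each annulus $A_i$) whose non-sink property is exactly what the orientation conventions and a chain of weight inequalities are designed to establish; in Part 2 the alternation encoded by $I_{i_a}$ versus $J_{i_a}$ comes from the existence of precisely two \emph{coherent} orientations of the $\beta_i$'s, which force all cusp directions along the $\alpha_i$'s to agree --- a global combinatorial fact that must be proved, not a boundary-local observation. Two smaller but genuine errors: the no-carried-torus hypothesis of Theorem \ref{boundary train tracks} is not ``immediate from coorientability'' (a cooriented branched surface can carry a torus); the paper proves it by a weight-system computation showing that the only compact surface carried by $B$ is $S$ itself. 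And the $I$-fibres of $N_B$ are arcs, not closed curves, so they are not closed transversals; tautness is obtained either by showing the resulting foliation has no compact leaves or by exhibiting the cores of the filling solid tori as closed transversals meeting every leaf.
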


\begin{remark}
Notice that if $h'$ is conjugated in ${\rm MCG}(S,\partial S)$ to a homeomorphism $h$ that satisfies the hypotheses of Theorem \ref{teorema foliazioni}, then the conclusion of the theorem holds also for $M_{h'}$.
\end{remark}
\begin{remark}
Notice that the first part of the theorem does not cover the case $a_0=0$. We did not investigate further this case, but for our purpose the statement of Theorem \ref{teorema foliazioni} will be sufficient.
\end{remark}

\subsubsection{Proof of the first part of Theorem \ref{teorema foliazioni}}

To prove Theorem \ref{teorema foliazioni} we will build branched surfaces in $M_h$ satisfying the hypotheses of Theorem \ref{boundary train tracks} by following the construction presented before Lemma \ref{lemma laminar}.
\\

We start by proving the first part of the theorem. 
We define a branched surface as follows. We consider the parallel arcs $\alpha_1,\dots, \alpha_k$ depicted in Figure \ref{parallel arcs}.

\begin{figure}[H]
   \centering
    \includegraphics[width=0.45\textwidth]{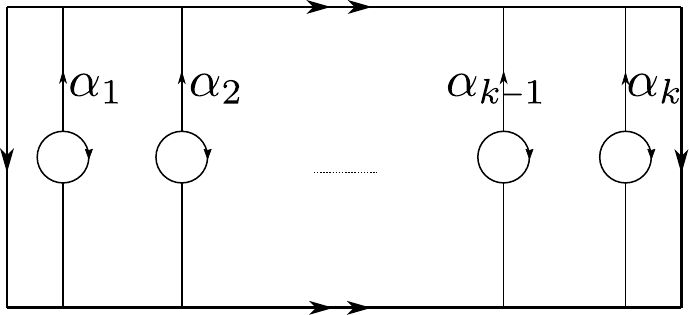}
    \caption{The parallel arcs $\alpha_1\dots, \alpha_k$}
    \label{parallel arcs}
\end{figure}

We consider the discs $\alpha_i\times [0,1]$, perturb them in a neighbourhood of $S\times \{1\}$ as explained in the discussion before Lemma \ref{lemma laminar}, and project them to the mapping torus $M_h$. We consider the (co)oriented branched surface $B$ in $M_h$ obtained by adding these discs $D_i$ to the surface $S$. The discs $D_i$'s are oriented so that the orientation on their boundary induces the given orientation on the arcs $\alpha_i$'s. For an example, see Figure \ref{no sink}, where also some cusp directions are showed. A good way to deduce the cusp directions along the arcs $\alpha_i$'s and $h(\alpha_i)$'s is the following: they point to the right along the arcs $\alpha_i$'s and they point to the left along the arcs $h(\alpha_i)$'s, where the latter are oriented as the image of the arcs $\alpha_i$'s.
\begin{figure}[H]
   \centering
    \includegraphics[width=0.55\textwidth]{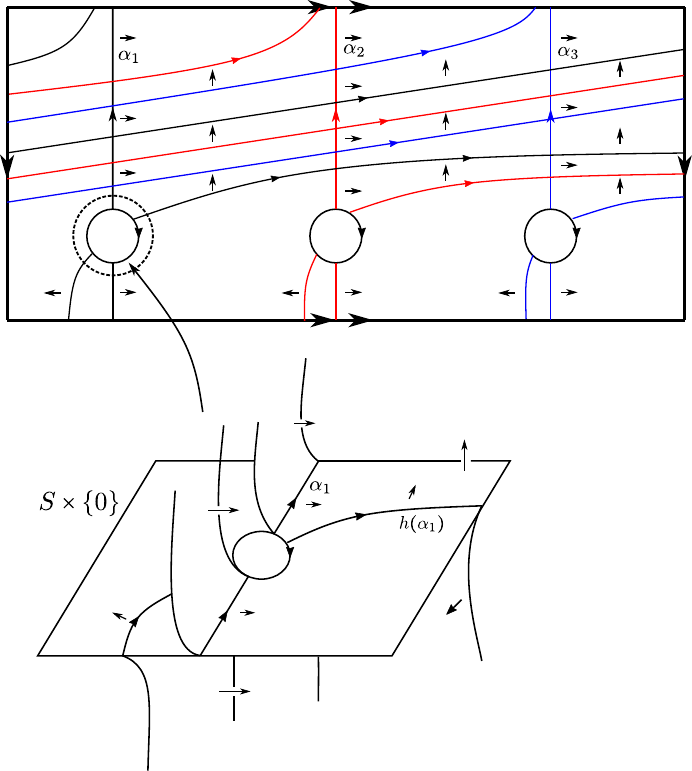}
    \caption{In this example, $a_0=2$ and $k=3$. We also show the details of our branched surface in a neighbourhood of $\partial_1 S$. }
    \label{no sink}
\end{figure}

\begin{lemma}\label{B is laminar}
The branched surface $B$ is laminar and satisfies the hypotheses of Theorem \ref{boundary train tracks}. 
\end{lemma}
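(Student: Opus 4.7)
The plan is to invoke Lemma \ref{lemma laminar} to obtain most of the laminarity conditions automatically, then verify the one remaining condition (no sink or half sink discs) by a direct inspection of the cusp directions, and finally check the two additional hypotheses needed to apply Theorem \ref{boundary train tracks}, namely irreducibility of $M_h$ with incompressible toroidal boundary and the bigon condition $\partial M_h\setminus \partial B=$ union of bigons.

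First I would check that the hypotheses of Lemma \ref{lemma laminar} are met. The surface $S$ is connected and oriented, and the monodromy $h=\tau_0^{a_0}\tau_1^{a_1}\cdots\tau_k^{a_k}$ fixes $\partial S$ pointwise since each Dehn twist is supported away from the boundary. The arcs $\alpha_1,\dots,\alpha_k$ are pairwise disjoint and properly embedded by construction. The key point is that $S\setminus \bigcup_i \alpha_i$ has no disc components: by inspecting the configuration of parallel arcs in Figure \ref{parallel arcs}, cutting $S$ along them yields a planar surface with genus contribution preserved, which is not a disjoint union of discs. Lemma \ref{lemma laminar} then gives conditions (1)--(4) of Definition \ref{laminar} together with the absence of trivial bubbles.

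The heart of the proof is verifying condition (5): no sink or half sink discs. The branch sectors of $B$ are the components of $S\cup D_1\cup\cdots\cup D_k$ after smoothing; concretely, they are the regions obtained by cutting $S$ along the $\alpha_i$'s and along the perturbed images $\widetilde{h(\alpha_i)}$'s, together with the discs $D_i$. Following the convention indicated in Figure \ref{no sink}, the cusp directions point to the right along each $\alpha_i$ and to the left along each $\widetilde{h(\alpha_i)}$. I would enumerate the branch sectors case by case: the sectors adjacent to $\gamma_0$ are influenced by $\tau_0^{a_0}$ with $a_0>0$, forcing the arcs $\widetilde{h(\alpha_i)}$ to wrap around $\gamma_0$ in a definite direction, so that each such sector carries at least one outward cusp. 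Sectors adjacent to $\gamma_j$ for $j\ge 1$ inherit outward cusps from the alignment of $\alpha_j$ with $\widetilde{h(\alpha_{j\pm 1})}$ regardless of the signs $a_j$, because the arcs $\alpha_i$ are chosen parallel. The discs $D_i$ themselves are not sink sectors since their boundaries contain arcs of $\alpha_i$ and $\widetilde{h(\alpha_i)}$ with cusp directions pointing out of $D_i$ in opposite directions. Sectors meeting $\partial M_h$ are handled identically, ruling out half sink discs.

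Finally I would verify the hypotheses of Theorem \ref{boundary train tracks}. Since $M_h$ is a mapping torus of a compact surface with boundary, it is orientable and irreducible, and each boundary torus $T_i$ is incompressible, being a $\pi_1$-injective torus in an aspherical $3$-manifold that fibers over $S^1$. On each $T_i$, the train track $\partial B$ is the union of the longitude $\partial_i S$ with the meridional arcs arising from the endpoints of the $\alpha_j$'s and the $\widetilde{h(\alpha_j)}$'s on $\partial_i S$; by direct inspection of Figure \ref{no sink}, the complement $T_i\setminus \partial B$ is a union of bigons. The main obstacle is the sink disc enumeration in the preceding step: the argument is combinatorially delicate because the structure of the branch sectors depends on all the signs of the $a_i$'s, but the choice of \emph{parallel} arcs $\alpha_i$ is precisely what guarantees that the outward cusps persist uniformly across all sectors, independently of the signs of $a_1,\dots,a_k$.
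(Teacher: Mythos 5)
Your overall strategy coincides with the paper's: invoke Lemma \ref{lemma laminar} to dispose of conditions $1$--$4$ of Definition \ref{laminar} and the absence of trivial bubbles, check condition $5$ by inspecting cusp directions, and then verify the bigon condition and the irreducibility/incompressibility of $M_h$ and its boundary tori. However, there is a genuine gap: Theorem \ref{boundary train tracks} has the explicit hypothesis that $B$ \emph{does not carry a torus that bounds a solid torus} in the filled manifold $M_h(s_1,\dots,s_k)$, and you never address it. This is not a formality -- roughly half of the paper's proof is devoted to it. The paper shows the stronger statement that the only connected compact surface properly embedded in $M_h$ and carried by $B$ is $S$ itself, via a weight-system argument: a carried compact surface $\Sigma$ induces non-negative integer weights on the branch sectors; the switch relations $\omega_{i_0,j}+\omega_l=\omega_{i_0,j+1}$ around each annulus $A_{i_0}$ force the cyclic chain $\omega_{i_0,1}\leq\cdots\leq\omega_{i_0,k|a_0|}\leq\omega_{i_0,1}$, hence all weights on the disc sectors $D_l$ vanish and $\Sigma$ is a union of parallel copies of $S$. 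Since $S$ has boundary, $B$ carries no closed surface and in particular no torus. Without this (or some substitute), you cannot apply Theorem \ref{boundary train tracks}; moreover this same fact is reused later (Lemma \ref{extending laminations}) to rule out compact leaves of the resulting laminations.

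A secondary inaccuracy: you describe the sink-disc enumeration as "combinatorially delicate because the structure of the branch sectors depends on all the signs of the $a_i$'s." For this particular branched surface it does not: the arcs $\alpha_i$ of Figure \ref{parallel arcs} are chosen so that $h(\alpha_i)=\tau_0^{a_0}(\alpha_i)$, so the twists $\tau_1^{a_1},\dots,\tau_k^{a_k}$ do not move them and only the sign and magnitude of $a_0$ enter. The clean way to organize the check (as the paper does) is to cut $S$ along the $\alpha_i$'s into annuli $A_1,\dots,A_k$ with $\partial A_i\supset -\alpha_i\cup\alpha_{i+1}$, observe that the cusp along $\alpha_{i+1}$ points out of $A_i$, and note that every disc sector inside $A_i$ meets both $\alpha_i$ and $\alpha_{i+1}$ (each annulus is cut into $k|a_0|$ discs by the $h(\alpha_j)$'s), hence has an outward cusp. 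Your case-by-case sketch gestures at this but, as written, would not survive scrutiny on its own.
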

\begin{proof}
Thanks to Lemma \ref{lemma laminar} to prove that $B$ is laminar it is enough to prove that $B$ contains no sink discs or half sink discs.
We prove that:

\begin{itemize}
    \item \emph{B contains no sink discs or half sink discs:} there are $k$ sectors of $B$ that are half discs and that coincide with the discs $D_i$'s; these sectors are never sink by construction (see Figure \ref{smoothings}).
    
    The other sectors coincide with the abstract closures of the connected components of $S\setminus{\left(\bigcup_i \alpha_i \cup \smash{\widetilde{h(\alpha_i)}}\right)}$. Being $a_0$ non-zero, these sectors are discs and half discs\footnote{When the product $k|a_0|$ satisfies $k|a_0|\leq 3$ there are only half disc sectors.}.
    We can organise these sectors in the following way. We refer to Figure \ref{no sink} to visualise the situation. If we cut $S$ along the arcs $\alpha_i$'s we obtain $k$ oriented annuli $A_1,\dots, A_k$, so that $\partial A_i\supset-\alpha_i \cup \alpha_{i+1}$ for $i\in \{1, \dots, k\}$, where $-\alpha_i$ denotes the arc $\alpha_i$ with the opposite orientation. Also notice that the cusp directions along $\alpha_i$ point inside $A_i$ and the cusp directions along $\alpha_{i+1}$ point outside $A_i$.
    
    It follows by the definition of the arcs $\alpha_i$ that $h(\alpha_i)=\tau_0^{a_0}(\alpha_i)$.
    Each of these annuli intersects $h(\alpha_j)$ in $|a_0|$ subarcs, for each $j=1,\dots, k$. Therefore, when we cut along the $h(\alpha_i)$'s we subdivide each of this annuli in $k|a_0|$ discs and these discs coincide with the sectors of $B$ in $S$. By construction each of these discs is contained in an annulus, say $A_i$, and intersects both $\alpha_i$ and $\alpha_{i+1}$ and therefore there is a cusp direction pointing outside it.

    \item \emph{The only connected compact surface properly embedded in $M_h$ carried by $B$ is $S$:} if $\Sigma$ is a compact surface properly embedded in $M_h$ carried by $B$, then $\Sigma$ induces an integral weight system on $B$; that is to say, $\Sigma$ defines a way to assign to each branch sector of $B$ a non-negative integer such that along each connected component of the branch locus minus the set of triple points of $B$ the weights sum according to the cusp directions, as represented in the following figure.
    
    \begin{figure}[H]
   \centering
    \includegraphics[width=0.25\textwidth]{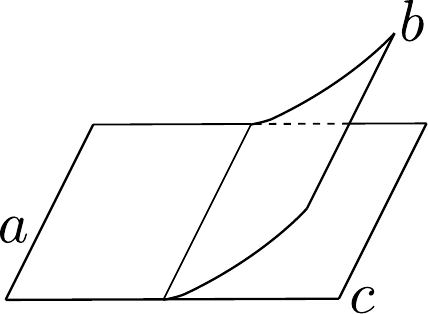}
    \caption{The weights $a,b,c$ must satisfy the equality $a=b+c$.}
     
\end{figure}
The way $\Sigma$ induces a weight system is the following: we fix a point in the interior of each sector and we assign to each sector the number of intersections between $\Sigma$ and the fiber of $N_B$ over this fixed point.
We denote with $\omega_i$'s the weights of the half disc sectors $D_i$'s. 

We have already observed that the other sectors of $B$ are organised so that each of the annuli $A_1,\dots, A_k$ contains 
$k|a_0|$ among discs and half discs sectors. When $a_0>0$ (resp. $a_0<0$), for each $i=1,\dots, k$, we order in each annulus these sectors by following the direction of $\alpha_i$ (resp. $-\alpha_i$) and denote with $\Delta_{i,j}$ the discs contained in the annulus $A_i$, with $j=1,\dots, ka_0$. We denote the weight of the disc $\Delta_{i,j}$ with $\omega_{i,j}$. See Figure \ref{disks in annulus} for an example.

\begin{figure}[H]
   \centering
    \includegraphics[width=0.8\textwidth]{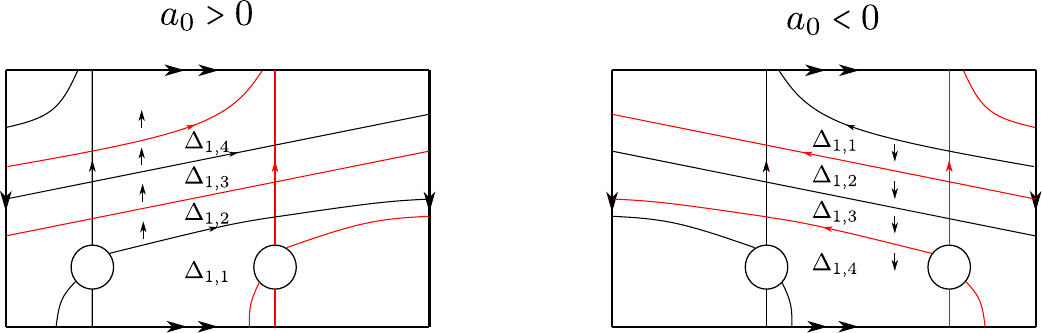}
    \caption{In this example it is showed how to order the discs inside the annulus $A_1$ in the case $a_0>0$ and in the case $a_0<0$.}
    \label{disks in annulus}
\end{figure}

Let us fix an annulus $A_{i_0}$. Each pair of consecutive discs $\Delta_{i_0,j}, \Delta_{i_0,j+1}$ is separated by a subarc of $h(\alpha_l)$, for some $l=1,\dots, k$. Since the arcs $\alpha_i$'s (and therefore also the $h(\alpha_i)$'s) are parallel it follows by the orientation of the discs $D_i$'s that we have:
$$
\omega_{i_0,j}+\omega_{l}=\omega_{i_0,j+1}.
$$
This implies the following chain of inequality:
$$
\omega_{i_0,1}\leq \omega_{i_0,2}\leq \dots\leq \omega_{i_0,k|a_0|}\leq \omega_{i_0,1}.
$$
Therefore the weights $\omega_{i_0,j}$ are all equal and 
since all the arcs $h(\alpha_i)$'s intersect the annulus $A_{i_0}$ we have that $\omega_l=0$ for each $l=1,\dots, k$. Therefore the sectors of $B$ contained in $S$ all have the same weight and the discs $D_i$'s have weight zero. This means that $\Sigma$ is a finite number of parallel copies of $S$.
\end{itemize}
By construction, $\partial M_h\setminus \partial B$ is a union of bigons (see Figure \ref{boundary train tracks2}). Moreover $B$ does not carry any closed surface, and therefore it does not carry tori bounding a solid torus in $M_h(s_1,\dots, s_k)$, for any multislope $(s_1,\dots, s_k)$. Since $M_h$ is irreducible and its boundary is union of incompressible tori, the hypotheses of Theorem \ref{boundary train tracks} are fulfilled.
\end{proof}

\begin{prop}\label{teorema foliazioni part 1}
If $a_0>0$, for any multislope $(s_1,\dots, s_k)\in (\infty, 1)^k$ the branched surface $B$ fully carries an essential lamination intersecting the boundary of $M_h$ in parallel curves of multislope $(s_1,\dots, s_k)$. 
If $a_0<0$ the same happens for any multislope $(s_1,\dots, s_k)\in (-1,\infty)^k$. 
\end{prop}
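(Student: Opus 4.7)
The plan is to apply Theorem \ref{boundary train tracks} to the branched surface $B$ defined just before Lemma \ref{B is laminar}. By Lemma \ref{B is laminar} we already know that $B$ is laminar, that $\partial M_h\setminus\partial B$ is a union of bigons, and that $B$ does not carry any closed surface (in particular, no torus bounding a solid torus in any filling). So the only thing left to verify is that the boundary train track $\partial B$ realises every multislope $(s_1,\dots,s_k)$ in the claimed range. I will treat $a_0>0$, the case $a_0<0$ being completely symmetric.

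First I would write down explicitly the train track $\tau_i:=\partial B\cap T_i$ on each boundary torus $T_i$. Under the chosen framing, $\partial_i S$ gives the longitude of $T_i$ and $\{x_i\}\times[0,1]$ gives the meridian. The surface $S$ contributes to $\tau_i$ the longitudinal curve $\partial_i S$. Each disc sector $D_j$ whose arc $\alpha_j$ has an endpoint on $\partial_i S$ contributes a meridional arc (one coming from $\alpha_j$ and one from $\widetilde{h(\alpha_j)}$). Thus $\tau_i$ is obtained from the longitude by attaching a collection of meridional arcs through bigons, and the cusps are determined by the rule already observed: along each arc of the form $\alpha_j$ the cusp points to the right, and along each arc of the form $\widetilde{h(\alpha_j)}$ it points to the left (when these arcs are oriented as images of the $\alpha_j$).

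The next step is to translate ``which slope is realised by $\tau_i$'' into a combinatorial statement. Using the cyclic order of meridional arcs around $\partial_i S$, I would check that the positivity $a_0>0$ forces each bigon of $\tau_i$ in the complement of its switches to be oriented so that the longitude piece and the meridian piece are joined through a smoothing whose cusp points in the direction of increasing slope (in the sense that a closed curve traversing the longitude once and the meridian $-q$ times with $q\ge 0$ is carried). More precisely, I would exhibit, for each pair of coprime integers $(p,q)$ with $p/q<1$, an explicit non-negative integer weight system on the branches of $\tau_i$ whose underlying cycle has homology class $p\lambda + q\mu$ (in the convention of the theorem, this is the slope $p/q$), with every branch receiving a strictly positive weight. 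The analogous family for $a_0<0$ produces slopes in $(-1,\infty)$.

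The main obstacle is this combinatorial verification: one has to check, once and for all, that the cusps of $\tau_i$ prevent the construction of a carried curve of slope $\ge 1$, while allowing every rational of slope $<1$ (and the longitude, corresponding to $p/q=\infty$, is clearly carried by the single longitudinal branch $\partial_i S$). The obstruction to slope $1$ comes from one ``bad'' switch on $\partial_i S$ produced by the positive twist $\tau_0^{a_0}$, and avoiding it forces the carried cycle to take at least one extra longitudinal step per meridional step; this is precisely the statement $p/q<1$. Once the combinatorial claim on each $\tau_i$ is established, we independently choose, for each $i$, an integer weight system realising the desired $s_i$, combine them into a weight system on $\partial B$, and apply Theorem \ref{boundary train tracks} to obtain the essential lamination in $M_h$ fully carried by $B$ that intersects $\partial M_h$ in parallel curves of multislope $(s_1,\dots,s_k)$.
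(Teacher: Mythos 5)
Your overall strategy is exactly the paper's: Lemma \ref{B is laminar} already certifies that $B$ is laminar, that $\partial M_h\setminus\partial B$ is a union of bigons, and that $B$ carries no closed surface, so everything reduces to showing that the boundary train tracks of $B$ realise precisely the multislopes in $(\infty,1)^k$ (resp.\ $(-1,\infty)^k$) and then invoking Theorem \ref{boundary train tracks}. That reduction is correct and is the same route the paper takes.

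The gap is that the one step carrying all the quantitative content is announced rather than performed. You write that you ``would'' draw the train track $\tau_i$, that you ``would'' exhibit a positive weight system for each slope $p/q<1$, and that ``the main obstacle is this combinatorial verification'' that slopes $\geq 1$ are obstructed while all slopes $<1$ are realised --- but none of this is actually done, and the heuristic you offer (a single ``bad'' switch forcing one extra longitudinal step per meridional step) is not a substitute for it: a priori the realised interval could have been $(\infty,0)$, or $(\infty,1/a_0)$, or depend on $k$, and nothing in your sketch rules these out. The paper settles this by drawing the two possible boundary train tracks (they are identical on every $T_i$ and depend only on $\sign(a_0)$) and putting an explicit two-parameter weight system on them: the realised slope is $x-y$, and full positivity of all branch weights forces $x\in(0,1)$, $y\in(0,\infty)$ when $a_0>0$ (and symmetrically when $a_0<0$), which gives exactly $(\infty,1)$ and $(-1,\infty)$. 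Until you produce the train track and such a weight system (or an equivalent computation), the proposition is not proved. A second, smaller caution: the statement of the proposition only asks for laminations, but your write-up should be careful with the slope convention --- in the paper the slope of $a\mu+b\lambda$ is $a/b$, whereas you assign slope $p/q$ to $p\lambda+q\mu$, which is the reciprocal convention and would flip the interval $(\infty,1)$ to something else if carried through literally.
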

\begin{proof}
We study the multislopes realised by the boundary train tracks of $B$.
In order to do this we assign rational weight systems to our boundary train tracks. Since our train tracks are oriented, we can associate to such a weight system the rational number $\sfrac{w_{\mu_i}}{w_{\lambda_i}}$, where $w_{\mu_i}$ and $w_{\lambda_i}$ are the \emph{weighted} intersections of the train tracks with our fixed meridians $\mu_i$ and longitudes $\lambda_i$, as we would do with oriented simple closed curves. This quotient can be interpreted as a \emph{slope} in the $i$-th boundary component of $M_h$. In fact it is can be shown that each slope $\sfrac{p}{q}$ obtained in this way is realised by the train track. Since we want to study slopes \emph{fully} carried by these train tracks, we have to require that each weight is strictly positive: if the weight of an arc is zero, the associated slope will not intersect the fibers over that arc. For details, see \cite{PH}.

The boundary train tracks of $B$ are all the same for each boundary tori, and only depend on the sign of $a_0$. The two possible types of boundary train tracks are depicted in Figure \ref{boundary train tracks2}.

\begin{figure}[H]
   \centering
    \includegraphics[width=0.65\textwidth]{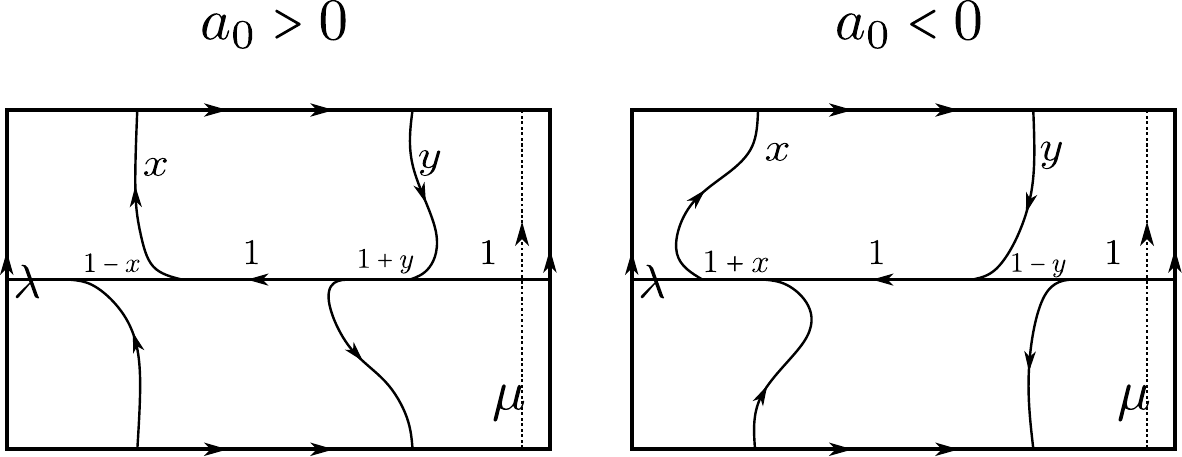}
    \caption{The two possible boundary train tracks with weight systems.}
    \label{boundary train tracks2}
\end{figure}
We also endowed the two train tracks with weight systems. The slopes of these weight systems are always $x-y$, but since we have to impose that each sector of the train tracks has positive weight we have that:
\begin{itemize}
    \item if $a_0>0$, $x$ can vary in $(0,1)$ and $y$ can vary  in $(0,+\infty)$;
    \item if $a_0<0$, $x$ can vary in $(0,+\infty)$  and $y$ can vary in $(0,1)$.
\end{itemize}

By letting $x,y$ vary we have that when $a_0>0$ the boundary train tracks realise all multislopes in $(\infty, 1)^k$ and when $a_0<0$ the boundary train tracks realise all slopes in $(-1,\infty)^k$. Thanks to Lemma \ref{B is laminar} we can apply Theorem \ref{boundary train tracks} to obtain the desired essential laminations.
\end{proof}

\begin{lemma}\label{extending laminations}
All the laminations constructed in the previous proposition extend to taut foliations of the filled manifolds.
\end{lemma}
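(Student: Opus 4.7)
The plan is to upgrade the essential laminations supplied by Proposition \ref{teorema foliazioni part 1} to $C^{\infty,0}$ cooriented taut foliations by the standard product-filling technique used in \cite{R,KR1}: the laminar branched surface $B$ was constructed so that every complementary region of $N_B$ is already an $I$-bundle, so the gaps of $\Lambda$ can be filled in by parallel leaves.

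First I would analyze the complementary regions. By the proof of Lemma \ref{lemma laminar} one has a canonical identification $M_h\setminus \mathrm{int}(N_B)\cong S'\times[0,1]$, where $S'=S\setminus\bigcup_i\mathrm{int}(N_{\alpha_i})$ has no disc components by the hypothesis on the arcs $\alpha_i$. The splittings of $B$ near $\partial M_h$ used in the proof of Theorem \ref{boundary train tracks} only cut these product pieces into smaller product pieces, and the attachment of meridional discs of the filling solid tori adds $\D^2\times[0,1]$ regions that are paired with the new disc sectors and therefore do not create trivial bubbles. Hence, in the filled manifold $M_h(s_1,\dots,s_k)$, every component of the complement of $N_{B(s_1,\dots,s_k)}$ is of the form $\Sigma\times[0,1]$ with $\Sigma$ a (possibly disc) surface meeting the horizontal boundary in $\Sigma\times\{0,1\}$.

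Next I would foliate each such complementary region $\Sigma\times[0,1]$ by the horizontal slices $\Sigma\times\{t\}$, and glue these leaves to $\Lambda$ along $\partial_h N_{B(s_1,\dots,s_k)}$. Since $\Lambda$ is fully carried by $B(s_1,\dots,s_k)$, its intersection with a transverse fiber of $N_{B(s_1,\dots,s_k)}$ is a closed (in fact perfect) subset of the interval, and the gluing in the complementary product regions fills the complementary open intervals with product leaves, yielding a decomposition of $M_h(s_1,\dots,s_k)$ into $2$-dimensional leaves which is a codimension-$1$ foliation $\mathcal{F}$ of class $C^{\infty,0}$. The coorientations chosen on the discs $D_i$ together with the orientation of $S$ make $B$ (and hence $\Lambda$) cooriented, and the coorientation extends naturally to the product-foliated pieces, so $\mathcal{F}$ is cooriented.

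Finally, to establish tautness I would exhibit a closed transversal through each leaf. A short meridional loop $\mu$ of the mapping torus $M_h\to S^1$ can be arranged to be transverse to all fibers of $N_B$ and to meet each $\Sigma\times[0,1]$ region, hence it is a closed transversal through every leaf of $\mathcal{F}$ contained in $M_h$. For leaves of $\mathcal{F}$ which contain a meridional disc of one of the filling solid tori, the core of that solid torus is a closed transversal. The main subtle point in executing the plan is to check that no $\D^2\times[0,1]$ region created in the filling step is trivial (which would force a sphere leaf upon filling) and that the foliation so constructed is genuinely $C^{\infty,0}$ at the horizontal boundary of $N_B$; both issues follow from the explicit product structure of the complement of $N_B$ together with the fact, verified in Lemma \ref{B is laminar}, that $B$ carries no closed surface other than parallel copies of $S$.
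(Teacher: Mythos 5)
Your filling step follows the paper's strategy: the complementary regions of the lamination are $I$-bundles (unions, along $\partial_v N_B$, of the product pieces of $M_h\setminus\mathrm{int}(N_B)$ and of the closures of the components of $N_B\setminus\Lambda$, whose product structures agree on the vertical annuli), and foliating them by parallel leaves and capping with meridional discs produces a cooriented foliation of $M_h(s_1,\dots,s_k)$. That part is essentially the paper's argument.

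The gap is in the tautness step. You claim that a single meridional loop $\mu$ of the fibration $M_h\to S^1$ is a closed transversal meeting \emph{every} leaf of $\mathcal{F}$ contained in $M_h$. A closed curve only meets the leaves passing through the finitely many fibers of $N_B$ (and the finitely many complementary product regions) that it actually traverses; since a leaf of $\mathcal{F}$ need not pass through every fiber of $N_B$ (full carriedness is a statement about the union of the leaves, not about each leaf), and since the gaps of $N_B\setminus\Lambda$ may be infinite in number, there is no justification that $\mu$, or any finite collection of such loops, meets every leaf. Moreover $B$ contains the vertical discs $D_i$, so transversality of a suspension-flow orbit to $\mathcal{F}$ is itself not automatic. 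The paper avoids all of this: it observes that the only connected compact surface carried by $B$ is $S$ (proved in Lemma \ref{B is laminar}), so for multislope $\ne(0,\dots,0)$ the lamination, and hence the extended and capped-off foliation, has no compact leaves, and a foliation of a closed $3$-manifold without compact leaves is taut (\cite[Example~4.23]{Calegari}). Your worry about trivial $\D^2\times[0,1]$ regions and sphere leaves is a red herring compared to this; the absence of compact leaves is the fact you actually need, and it is never established in your argument.
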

\begin{proof}
Let $\Lambda$ be one of the laminations constructed in Proposition \ref{teorema foliazioni part 1} and suppose that $\Lambda$ intersect $\partial M_h$ in parallel curves of multislope $(s_1,\dots, s_k)$. We can also suppose that $\partial_h N_B\subset \Lambda$.
First of all we notice that if the multislope is different from $(0,\dots, 0)$ then $\Lambda$ does not have any compact leaves.  In fact any leaf of $\Lambda$ is carried by $B$ and we have showed in the proof of Lemma \ref{B is laminar} that the only connected compact surface carried by $B$ is the $k$-holed torus $S$. Therefore if $\Lambda$ has a compact leaf then it should intersect $\partial M_h$ in parallel curves of multislope $(0,\dots,0)$. 

We now consider the abstract closures (in a path metric on $M_h$) of the complementary regions of $\Lambda$. These closures are $[0,1]$-bundles; in fact they are unions, along $\partial_v N_B$, of:

\begin{itemize}
    \item components of $M_h\setminus {\rm int}(N_B)$, that are products of the type $F\times [0,1]$, where $F$ is a surface, with 
    $$
    \partial_h N_B\cap (F\times [0,1])=F\times \{0,1\}
    $$
    and 
    $$
    \partial_v N_B\cap (F\times [0,1])=\partial' F\times [0,1]
    $$
    where $\partial'F$ is the closure of $\partial F \setminus \partial M_h$;
    \item abstract closures of the components of $N_B\setminus{\Lambda}$. Since $\Lambda$ intersects transversely the fibers of $N_B$ also these closures are products with the same properties of the components of $M_h\setminus{\rm int}(N_B)$.
\end{itemize}
Each component of the vertical boundary of $N_B$ is an annulus $S^1\times [0,1]$ or a disc $[0,1]\times [0,1]$, where each interval $\{*\}\times [0,1]$ is contained in a  fiber of $N_B$. Both the product structures of the components of $M_h\setminus {\rm int}(N_B)$ and of the abstract closures of the components of $N_B\setminus \Lambda$ define a foliation of the vertical boundary transverse to the interval fibers. Any of two such foliations are isotopic and therefore also the abstract closures of the complementary regions of $\Lambda$ are products.

In particular, since the horizontal boundary of the closures of these complementary regions are leaves of $\Lambda$ we can foliate these bundles with parallel leaves to obtain a foliation of the whole $M_h$. This foliation has no compact leaves and intersects the boundary of $M$ in parallel curves of multislope $(s_1,\dots,s_k)$. Therefore the leaves of this foliation can be capped with the meridional discs of the solid tori to obtain a foliation of the filled manifold $M_h(s_1,\dots,s_k)$ that has no compact leaves as well, and that is therefore  taut (see \cite[Example~4.23.]{Calegari}).
\end{proof}

\begin{remark}\label{remark kalelkar roberts}
In the terminology of \cite{KR1}, if $|a_0|=1$ then the pair of parallel $k$-uples 
$$
(\widetilde{h(\alpha)},\alpha)
$$
is good and oriented, where $\alpha=(\alpha_1,\dots,\alpha_k)$ and $\widetilde{h(\alpha)}=(\widetilde{h(\alpha_1)},\dots,\widetilde{h(\alpha_k)})$. In this case the branched surface constructed in the previous discussion coincides with the branched surface associated to the sequence $(\smash{\widetilde{h(\alpha)}},\alpha)$ by Kalelkar and Roberts in \cite{KR1}.
\end{remark}

\subsubsection{Proof of the second part of Theorem \ref{teorema foliazioni}}
We now focus our attention on the second part of Theorem \ref{teorema foliazioni} and we define a new branched surface. We fix a new set of arcs $\alpha_1,\dots, \alpha_k$ in the following way. We consider arcs $\beta_1,\dots,\beta_k$ as in Figure \ref{horizontal arcs} and choose $\alpha_i$ so that $\smash{\widetilde{h(\alpha_i)}}=\beta_i$. One example is depicted in Figure \ref{orientations}.

\begin{figure}[H]
   \centering
    \includegraphics[width=0.5\textwidth]{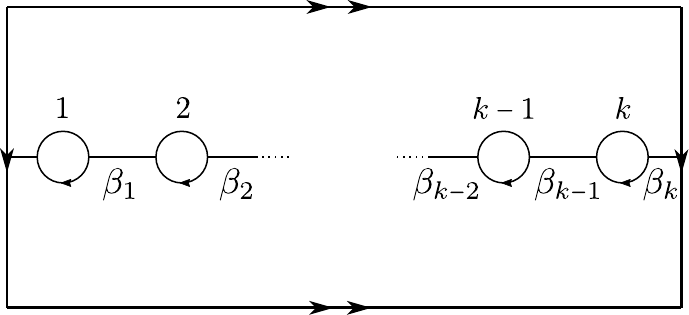}
    \caption{The arcs $\beta_i$'s.}
    \label{horizontal arcs}
\end{figure}

We now give orientations to the arcs $\alpha_i$'s in order to build our branched surfaces. It will be simpler to state how to assign orientations to the $\beta_i$'s and we will orient each $\alpha_i$ as isotopic to $h^{-1}(\beta_i)$, for $i=1,\dots,k$.

\begin{definition}
We  say that an orientation of the arcs $\beta_i$'s is \textbf{coherent} if the following hold:
\begin{itemize}
    \item if a boundary component $\partial_i S$ has label $p_+$ or $p_-$ the arcs $\beta_{i-1}$ and $\beta_{i}$ intersecting $\partial_i S$ are oriented so that the first starts at $\partial_i S$ and the second ends at $\partial_i S$, or viceversa. In this case we say that $\beta_{i-1}$ and $\beta_{i}$ \emph{have the same direction};
    \item if a boundary component $\partial_i S$ has label $n$ the arcs $\beta_{i-1}$ and $\beta_{i}$ intersecting $\partial_i S$ are oriented so that both start or both end at $\partial_i S$. In this case we say that $\beta_{i-1}$ and $\beta_{i}$ \emph{have opposite directions}. In case the arcs both start at $\partial_i S$ we say that the component is of type $n_{\textbf{o}}$ (the subscript \textbf{o} stands for ``out'') and if both end at $\partial_i S$ we say that is of type $n_{\textbf{i}}$ (\textbf{i} standing for ``in'').
\end{itemize}
\end{definition}
See Figure \ref{orientations} for an example\footnote{recall that the factorisation of the monodromy $h$ is to be read from right to left; this should help to figure out why $h(\alpha_i)=\beta_i$.}.
Notice that there is always an even number of boundary components of $S$ with label $n$. Moreover the boundary components with label $n$ are alternately of type $n_{\textbf{o}}$ and $n_{\textbf{i}}$. 
\begin{figure}[H]
   \centering
    \includegraphics[width=0.6\textwidth]{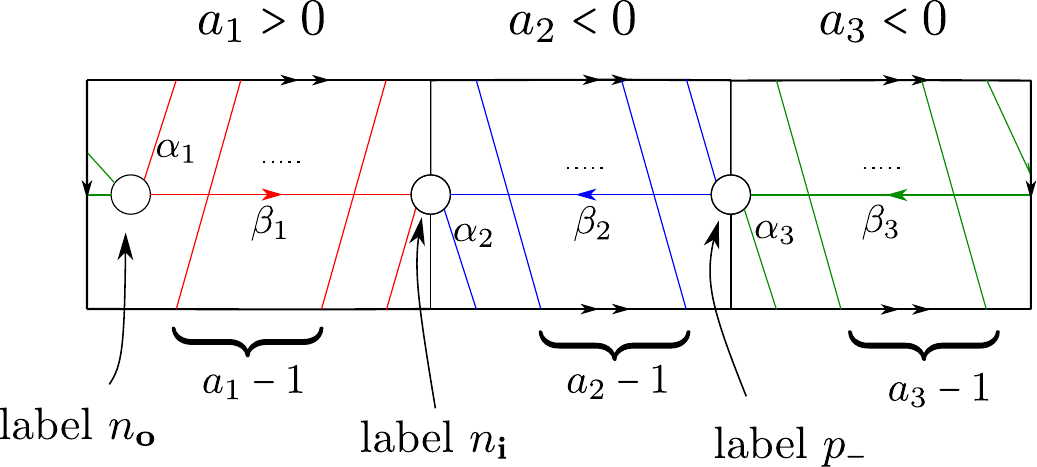}
    \caption{An example of a coherent orientation. In this case the arcs $\beta_2$ and $\beta_3$ have the same direction, while the arcs $\beta_1$ and $\beta_2$ and the arcs $\beta_3$ and $\beta_1$ have opposite directions.}
    \label{orientations}
\end{figure}
We will soon use coherent orientations to build branched surfaces. First of all we prove the following lemma.

\begin{lemma}
There always exist exactly two different coherent orientations of the arcs $\beta_i$'s.
\end{lemma}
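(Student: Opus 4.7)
The plan is to encode each orientation of $\beta_i$ by a sign $\epsilon_i\in\{\pm 1\}$, show that the coherence conditions form a cyclic system of $k$ multiplicative equations in these signs, and verify that the system is always consistent and has exactly two solutions.

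First, I would fix a reference direction for each $\beta_i$ (say, from $\partial_i S$ to $\partial_{i+1} S$, cyclically), and record by $\epsilon_i=+1$ the case where the chosen orientation agrees with the reference, and by $\epsilon_i=-1$ the opposite. With this convention, at a fixed boundary component $\partial_i S$ (which is met precisely by $\beta_{i-1}$ and $\beta_i$), a direct case check on the four possibilities for $(\epsilon_{i-1},\epsilon_i)$ shows:
\begin{itemize}
\item $\beta_{i-1},\beta_i$ \emph{have the same direction} at $\partial_i S$ iff $\epsilon_{i-1}\epsilon_i=+1$;
\item $\beta_{i-1},\beta_i$ \emph{have opposite directions} at $\partial_i S$ iff $\epsilon_{i-1}\epsilon_i=-1$.
\end{itemize}
Hence a coherent orientation is precisely a solution of the cyclic system $\epsilon_{i-1}\epsilon_i=\delta_i$ for $i=1,\dots,k$, where $\delta_i=+1$ if $\partial_i S$ has label $p_+$ or $p_-$, and $\delta_i=-1$ if it has label $n$.

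Next, I would extract the consistency condition. Taking the product of all $k$ equations, the left-hand side telescopes to $\prod_{i=1}^k\epsilon_{i-1}\epsilon_i=\bigl(\prod_{i=1}^k\epsilon_i\bigr)^{\!2}=1$, while the right-hand side equals $(-1)^N$ where $N$ is the number of boundary components with label $n$. Thus a solution exists iff $N$ is even. But $N$ equals the number of sign-changes in the cyclic sequence $a_1,a_2,\dots,a_k,a_{k+1}=a_1$, and any cyclic sequence of nonzero integers has an even number of sign-changes (one must return to the starting sign). So the consistency always holds.

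Finally, once the system is consistent, the relations $\epsilon_i=\delta_i\epsilon_{i-1}$ determine $\epsilon_2,\dots,\epsilon_k$ uniquely from $\epsilon_1$, and the two choices $\epsilon_1=\pm 1$ give the two coherent orientations (which differ by globally reversing every arc). The main (minor) obstacle is the case check translating "same/opposite direction" into the sign equations, which only requires keeping careful track of which endpoint of $\beta_{i-1}$ and $\beta_i$ lies on $\partial_i S$; everything else is formal.
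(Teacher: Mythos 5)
Your proof is correct and is essentially the paper's argument in multiplicative disguise: the paper fixes the orientation of $\beta_1$, propagates it inductively (your relations $\epsilon_i=\delta_i\epsilon_{i-1}$), and closes the loop using the fact that the number of $n$-labelled boundary components is even, which is exactly your telescoping-product consistency check. The only cosmetic difference is that you also justify the evenness of $N$ via sign-changes in the cyclic sequence of the $a_i$'s, a fact the paper records separately just before the lemma.
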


\begin{proof}
We fix an orientation of the arc $\beta_1$. We prove that there exists a unique coherent orientation of the arcs $\beta_i$'s agreeing with the fixed orientation on $\beta_1$ and this implies the thesis. We orient the arcs $\beta_i$'s inductively. Suppose that we have oriented $\beta_1,\dots, \beta_j$. Then:
\begin{itemize}
    \item if $\partial_{j+1}S$ has label $p_+$ or $p_-$ we orient $\beta_{j+1}$ so that it has the same direction of $\beta_j$;
    
    \item if $\partial_{j+1}S$ has label $n$ we orient $\beta_{j+1}$ so that its direction is opposite to the one of $\beta_j$.
\end{itemize}

In other words, once we have fixed an orientation on $\beta_1$ the coherence condition completely determines the orientations of $\beta_2,\dots, \beta_k$. The only thing to be checked in order to prove that this orientation is actually coherent is the behaviour of $\beta_k$ and $\beta_1$ at $\partial_1 S$. Since there is always an even number of boundary components of $S$ with label $n$ it follows that:
\begin{itemize}
    \item if $\partial_1 S$ has label $p_+$ or $p_-$ then the direction changes an even number of times between $\beta_1$ and $\beta_k$ and therefore $\beta_1$ and $\beta_k$ have the same direction;
    \item if $\partial_1 S$ has label $n$ then the direction changes an odd number of times between $\beta_1$ and $\beta_k$ and therefore $\beta_1$ and $\beta_k$ have opposite directions.
\end{itemize}
Therefore the orientation defined in this way is coherent and this concludes the proof.
\end{proof}

We fix a coherent orientation and as usual we consider the branched surface $B$ that is the union of $S$ and the images in $M_h$ of the discs  $\alpha_i\times [0,1]\subset S\times [0,1]$. We denote the image of $\alpha_i\times [0,1]$ with $D_i$ and orient the discs $D_i$'s so that the orientation on their boundary induces the given orientation on the $\alpha_i$'s. Exactly as before we have:

\begin{lemma}\label{B is laminar_2}
The branched surface $B$ is laminar and satisfies the hypotheses of Theorem \ref{boundary train tracks}.
\end{lemma}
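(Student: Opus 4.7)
The plan is to follow the pattern of Lemma \ref{B is laminar}: invoke Lemma \ref{lemma laminar} to obtain conditions $1$--$4$ of Definition \ref{laminar} and the absence of trivial bubbles, and then verify by hand that $B$ has no sink or half sink discs together with the remaining hypotheses of Theorem \ref{boundary train tracks} (bigon complementary regions on $\partial M_h$, no carried torus bounding a solid torus in any filling). To apply Lemma \ref{lemma laminar} I need that $S\setminus\bigcup_{i=1}^k\alpha_i$ has no disc components. Since $h$ is a homeomorphism and $\alpha_i$ is isotopic to $h^{-1}(\beta_i)$, this is homeomorphic to $S\setminus\bigcup_{i=1}^k\beta_i$; a direct look at Figure \ref{horizontal arcs} shows that the $\beta_i$ are arcs parallel to the boundary joining consecutive holes, so the complementary pieces retain enough topology (a handle of the torus, or annular collars) and none of them is a disc.

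The main task is then the sink disc analysis. As before, the half disc sectors $D_i$ are never sink by construction. The remaining sectors sit in $S$ and are cut out by the arcs $\alpha_j$ and $\beta_j=\widetilde{h(\alpha_j)}$, with cusp directions determined by the chosen coherent orientation. The coherence rules were designed exactly to make this argument work near the boundary: at a component $\partial_j S$ with label $p_+$ or $p_-$ the arcs $\beta_{j-1}$ and $\beta_j$ share a direction, so the associated cusps fall on opposite sides relative to $\partial_j S$ and no sector there becomes a sink half disc; at a component with label $n$ the two arcs have opposite directions, giving an $n_{\mathbf{o}}$ or $n_{\mathbf{i}}$ configuration for which, again, every sector meeting $\partial_j S$ has at least one cusp pointing outward. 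The interior sectors are bordered by segments of an $\alpha$ and a $\beta$ whose cusp directions point to opposite sides of the sector, hence at least one cusp points out of each such sector.

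For the remaining hypotheses of Theorem \ref{boundary train tracks}, a direct inspection of the boundary train tracks on each torus $T_j$ (two oriented arcs coming from $\alpha_{j-1},\alpha_j$ and $\beta_{j-1},\beta_j$, together with the meridional circle traces) shows that $\partial M_h\setminus\partial B$ is a union of bigons. To analyse compact surfaces carried by $B$, I would put integer weights on the sectors satisfying the switch conditions along the branch locus: walking along each $\alpha_i$ or $\beta_i$ the switch conditions force consecutive sectors in $S$ to carry equal weights, and then the switch condition at the two ends of each $D_i$ forces the weight of $D_i$ to vanish. Thus any compact carried surface is a disjoint union of parallel copies of $S$, so in particular $B$ carries no torus, and a fortiori no torus bounding a solid torus in a filling $M_h(s_1,\dots,s_k)$.

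The hardest point is the sink disc check near the boundary: it is a combinatorial case analysis over the possible labels $p_+,p_-,n$, and relies crucially on the alternation between $n_{\mathbf{o}}$ and $n_{\mathbf{i}}$ along consecutive $n$-components — which is precisely what the coherent orientation was introduced to enforce. Once that verification is performed, the rest of the proof is the formal analogue of the proof of Lemma \ref{B is laminar}.
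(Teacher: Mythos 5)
Your overall route is the same as the paper's: Lemma \ref{lemma laminar} disposes of conditions $1$--$4$ and trivial bubbles once one checks $S\setminus\bigcup_i\alpha_i$ has no disc components, and what remains is the sink disc check plus the bigon and no-carried-torus conditions. The genuine gap is in the sink disc check, which is the only non-routine step and which you explicitly defer ("once that verification is performed\dots"). The single idea that makes that verification work is absent from your sketch: a coherent orientation of the $\beta_i$'s forces the cusp directions along \emph{all} the arcs $\alpha_i$ to point in the same direction around $S$. The paper proves this by induction, using that the cusp along $\alpha_i$ agrees with the direction of $\beta_i$ exactly when $a_i>0$, so one must show $\beta_i$ and $\beta_j$ have the same direction if and only if $a_ia_j>0$. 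Without this claim, "every sector has at least one outward cusp" does not follow. Moreover, your description of the sectors is not the correct one. The sectors of $B$ lying in $S$ are the closures of the components of $S\setminus\bigcup_i\bigl(\alpha_i\cup\widetilde{h(\alpha_i)}\bigr)$; cutting $S$ along the auxiliary parallel arcs of Figure \ref{parallel arcs} into annuli $A_1,\dots,A_k$, a sector interior to an annulus has two \emph{parallel subarcs of the same arc} $\alpha_j$ in its boundary (so one cusp points in and one points out, with no further input), while a sector straddling two consecutive annuli has subarcs of two \emph{consecutive} arcs $\alpha_j,\alpha_{j+1}$ --- and only the agreement of all cusp directions guarantees that one of these points outward. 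Your "bordered by a segment of an $\alpha$ and a segment of a $\beta$ whose cusps point to opposite sides" is neither the actual configuration nor justified; the cusps along $\alpha_i$ and along $\widetilde{h(\alpha_i)}$ point to opposite sides of their respective \emph{oriented arcs}, which says nothing about a given sector without tracking the geometry.

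A smaller but real issue is the weight argument, which as written is logically reversed. The switch conditions do not directly force consecutive sectors of $S$ to carry equal weights: they give $\delta_{l+1}=\delta_l+\omega_j$ with $\omega_j\geq 0$ the weight of some $D_j$, hence only $\delta_l\leq\delta_{l+1}$. One must order the $N=\sum_i|a_i|$ sectors cyclically along the common cusp direction (again using the claim above) to obtain $\delta_1\leq\cdots\leq\delta_N\leq\delta_1$, whence all the $\delta_l$ are equal and only then all $\omega_j=0$. The conclusion --- only parallel copies of $S$ are carried, hence no torus --- is correct once this is repaired.
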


\begin{proof}
The proof is analogous to the one of Lemma \ref{B is laminar}. We only need to prove that $B$ contains no sink discs or half sink discs, and that the only connected surface properly embedded in $M_h$ carried by $B$ is $S$.
\begin{itemize}
\item \emph{B contains no sink discs or half sink discs:} there are $k$ sectors of $B$ that coincide with the discs $D_i$'s and they always have cusp directions pointing outside. We focus our attention on the sectors contained in $S$. We consider the $k$ annuli $A_i$ obtained by cutting $S$ along the arcs of Figure \ref{parallel arcs}. Each of these annuli contains in its interior some disc and half disc sectors and intersects two other half disc sectors. The former are never sink because each of these sectors has in its boundary two parallel subarcs of some arc of the $\alpha_i$'s, as for example Figure \ref{annulus} shows.

\begin{figure}[H]
   \centering
    \includegraphics[width=0.23\textwidth]{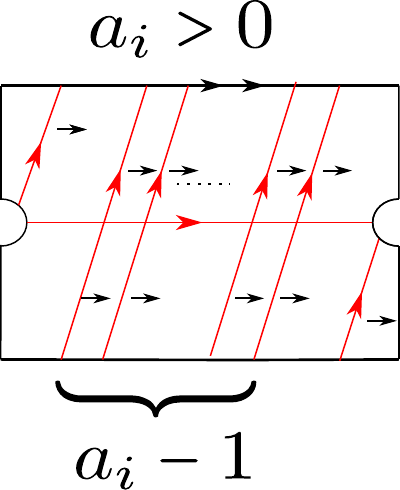}
    \caption{The annulus $A_i$.}
    \label{annulus}
\end{figure}

We now claim the following:

\textbf{Claim: }since we have fixed a coherent orientation of the arcs $\beta_i$'s, the cusp directions along the arcs $\alpha_i$'s all point in the same direction.

The claim implies that the sectors belonging to two consecutives annuli are never sink because each of these sectors has in its boundary two subarcs of two consecutive arcs of the $\alpha_i$'s. For an example, see Figure \ref{annulus2}.

\textbf{Proof of the claim: }
We first notice that when $a_i>0$ (resp. $a_i<0$) the cusp direction along the arc $\alpha_i$ has the same (resp. opposite) direction of $\beta_i$ (recall that the cusp direction always points to the right along the oriented arcs $\alpha_i$'s). Therefore to prove the claim it is sufficient to prove that $\beta_i$ and $\beta_j$ have the same direction if and only if $a_ia_j>0$, and to prove this it is enough to prove that $a_1a_i>0$ if and only if $\beta_1$ and $\beta_i$ have the same direction. We prove this by induction on $i$. 
If $i=2$ this follows from the definition of coherent orientation. We suppose now that the thesis is true for $i$ and we prove it for $i+1$. Suppose that $a_1a_{i+1}>0$; then if $a_1a_i>0$ we know by inductive hypothesis that $\beta_1$ and $\beta_i$ have the same direction. Moreover we deduce that $a_ia_{i+1}>0$ and by the definition of coherent orientation that $\beta_i$ and $\beta_{i+1}$ have the same direction and therefore also $\beta_1$ and $\beta_{i+1}$ have the same direction. The other cases can be analysed similarly. This concludes the proof of the claim.

\begin{figure}[H]
   \centering
    \includegraphics[width=0.9\textwidth]{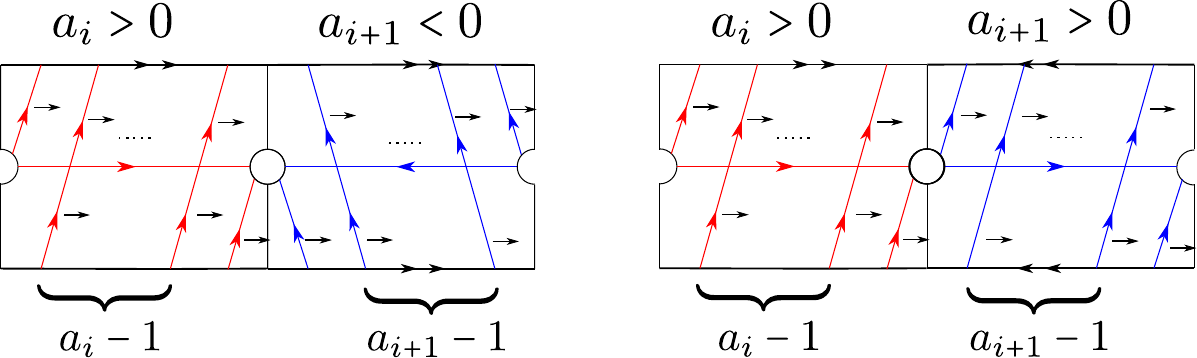}
    \caption{The figure shows how the choice of a coherent orientation implies that the cusp directions along the arcs $\alpha_i$'s all have the same direction.}
    \label{annulus2}
\end{figure}

\item \emph{ The only connected compact surface properly embedded in $M_h$ carried by $B$ is $S$:} suppose that $\Sigma$ is a compact surface carried by $B$. $\Sigma$ induces an integral weight system on $B$. We denote with $\omega_i$ the weight of the discs $D_i$. The number of sectors contained in $S$ is equal to $N=\sum_{i=1}^k|a_i|$. Since we have fixed a coherent orientation of the arcs $\beta_i$, the cusp directions along the arcs $\alpha_i$'s all point in the same direction. We order the sectors in $S$ according to this direction as depicted in Figure \ref{disks in sectors}; we denote them with $\Delta_{l}$ and we denote their weights with $\delta_{l}$, where $1\leq l\leq N$. 

\begin{figure}[H]
   \centering
    \includegraphics[width=0.5\textwidth]{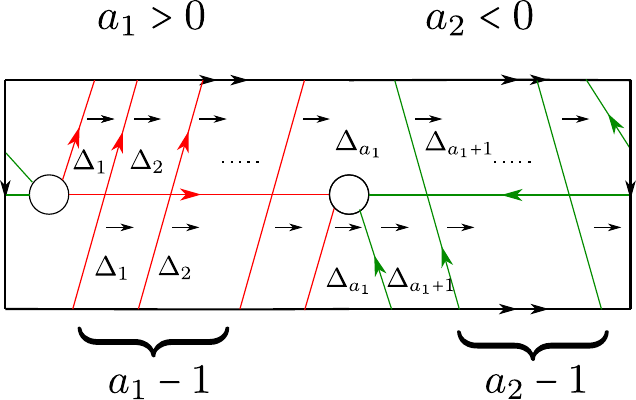}
    \caption{An example that shows how to label the sectors $\Delta_l$'s.}
    \label{disks in sectors}
\end{figure}

As in the proof of the first part of the theorem, we have that 
$$
\delta_1\leq \delta_2 \leq \cdots\leq \delta_N\leq\delta_1.
$$
Therefore these weights are all equal and this implies that the discs $D_i$'s all have weight zero; that is to say, $\Sigma$ is a finite number of parallel copies of $S$.
\end{itemize}

\end{proof}

\begin{prop}\label{teorema foliazioni part 2}
Let $I_1,\dots, I_k$ and $J_1,\dots,J_k$ be the intervals defined in the discussion before the statement of Theorem \ref{teorema foliazioni} and let $B$ denote the branched surface associated to a coherent orientation of the arcs $\beta_i$'s. Then for one choice of coherent orientation of the arcs $\beta_i$'s, $B$ fully carries essential laminations intersecting the boundary of $M_h$ in parallel curves of multislope $(s_1,\dots, s_k)$, for $(s_1,\dots, s_k)\in I_1\times\cdots \times I_k$. Choosing the other coherent orientation yields $B$ that fully carries essential laminations intersecting the boundary of $M_h$ in parallel curves of multislope $(s_1,\dots, s_k)$, for $(s_1,\dots, s_k)\in J_1\times\cdots \times J_k$. Moreover these laminations extend to taut foliations of the filled manifold $M_h(s_1,\dots, s_k)$.
\end{prop}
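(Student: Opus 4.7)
The plan is to apply Theorem \ref{boundary train tracks} to the branched surface $B$ built from a coherent orientation. By Lemma \ref{B is laminar_2}, $B$ is laminar, $M_h$ is irreducible with incompressible tori boundary, and the only connected compact surface carried by $B$ is $S$; since $S$ has longitudinal boundary slope $0$ on every $T_i$ and every $I_i, J_i$ avoids $0$, $B$ cannot carry a torus that bounds a solid torus in $M_h(s_1,\ldots,s_k)$. All the hypotheses of Theorem \ref{boundary train tracks} will therefore be in place once the multislopes realized by the train track $\partial B$ have been computed.

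The bulk of the argument is a local analysis of $\partial B \cap T_i$, component by component. On $T_i$ the train track is assembled from the meridional arcs of the two discs $D_{i-1}, D_i$ meeting $\partial_i S$; its shape is governed by the cusp directions (which, by the Claim established in the proof of Lemma \ref{B is laminar_2}, all point in a single global direction once a coherent orientation is fixed) and by the coherence rule controlling the relative orientations of $\beta_{i-1}$ and $\beta_i$ at $\partial_i S$. When $\partial_i S$ has label $p_+$ (resp.\ $p_-$), the two $\beta$-arcs share a direction and $a_i,a_{i+1}$ have the same sign; the local model reduces to the one of Figure \ref{boundary train tracks2}, and exactly the open arc $(\infty,1)$ (resp.\ $(-1,\infty)$) is realized. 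When $\partial_i S$ has label $n$, coherence forces opposite directions for $\beta_{i-1}$ and $\beta_i$; the resulting train track has a distinct combinatorial shape whose positive weight systems span either $(\infty,0)$ or $(0,\infty)$, depending on whether the component is of type $n_{\textbf{i}}$ or $n_{\textbf{o}}$.

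To match this local data to the intervals $I_{i_a}, J_{i_a}$, I would verify that the types $n_{\textbf{i}}, n_{\textbf{o}}$ alternate along the cyclic order $i_1<\cdots<i_{2c}$: between two consecutive $n$-components the $\beta$-arcs preserve their direction (since each $p_\pm$ component is a direction-preserving junction by the coherence rule) while at each $n$-component the direction reverses, forcing the alternation. Consequently one of the two coherent orientations realizes precisely the pattern appearing in the definition of the $I_{i_a}$, while the other coherent orientation, obtained by reversing all arrows, swaps $n_{\textbf{i}}\leftrightarrow n_{\textbf{o}}$ globally and realizes the pattern of the $J_{i_a}$. Combined with the $p_\pm$ analysis, this gives exactly the two products $I_1\times\cdots\times I_k$ and $J_1\times\cdots\times J_k$ of realized multislopes.

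The extension of each essential lamination $\Lambda$ fully carried by $B$ to a taut foliation on $M_h(s_1,\ldots,s_k)$ is identical to Lemma \ref{extending laminations}: as the multislope is not $(0,\ldots,0)$ and $S$ is the only compact surface carried by $B$, $\Lambda$ has no compact leaves; the complementary regions of $\Lambda$ in $M_h$ are $I$-bundles, obtained by gluing product pieces of $M_h\setminus\mathrm{int}(N_B)$ to the product closures of the components of $N_B\setminus\Lambda$ along $\partial_v N_B$ with compatible interval fibers, so they can be foliated by parallel leaves to produce a taut foliation of $M_h$ whose boundary traces extend across meridional discs of the filling solid tori. I expect the main obstacle to be the cusp-direction bookkeeping in the $n$-case: one must confirm that the train track realizes the entire claimed open half-line and orient the $n_{\textbf{i}}/n_{\textbf{o}}$ distinction correctly, so that the global alternation matches the formula for $I_{i_a}$ and $J_{i_a}$ rather than its inverse.
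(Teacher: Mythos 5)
Your proposal is correct and follows essentially the same route as the paper: invoke Lemma \ref{B is laminar_2} to verify the hypotheses of Theorem \ref{boundary train tracks}, compute the realized multislopes by a component-by-component analysis of the boundary train tracks (label $p_\pm$ giving $(\infty,1)$ or $(-1,\infty)$ independently of the orientation, label $n$ giving $(0,\infty)$ or $(\infty,0)$ depending on the $n_{\textbf{o}}/n_{\textbf{i}}$ alternation, which the two coherent orientations swap), and then extend the resulting laminations via Lemma \ref{extending laminations}. The only cosmetic difference is that the non-existence of a carried torus follows most directly from the fact that the only compact surface carried by $B$ is $S$, which is not closed, rather than from a boundary-slope argument.
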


\begin{proof}
We focus our attention on the boundary train tracks of $B$. For a fixed boundary component of $S$  we have the four possible configurations showed in Figure \ref{boundary label} and for each of this configurations we have two possible way to fix a coherent orientation.

\begin{figure}[H]
   \centering
    \includegraphics[width=0.9\textwidth]{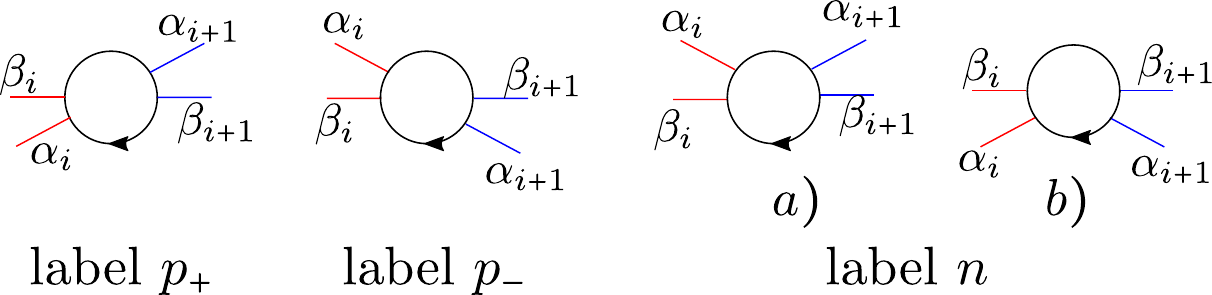}
    \caption{The four possible configuration of arcs in a neighbourhood of $\partial_{i+1} S$.}
    \label{boundary label}
\end{figure}
If the boundary component has label $p_+$ or $p_-$ the type of the boundary train track does not depend on the choice of the coherent orientation, and is described in the following figure, where for concreteness we have assigned an orientation to the arcs, and where in the middle picture we have also described the branched surface in a neighbourhood of the boundary component. If we consider the other coherent orientation, we obtain the same train tracks.
\begin{figure}[H]
   \centering
    \includegraphics[width=0.9\textwidth]{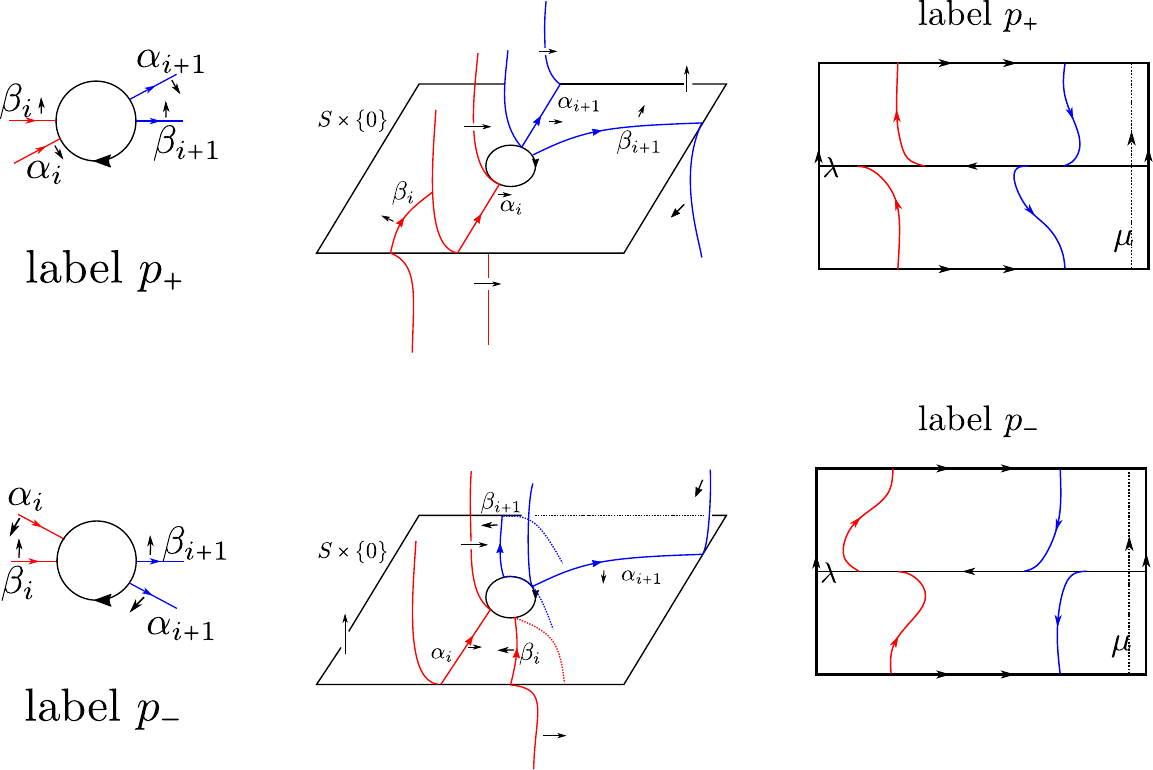}
    \caption{In this figure we describe the branched surface in a neighbourhood of the $(i+1)$-th boundary component of $M_h$ and its boundary train track in the case of label $p_+$ and $p_-$.}
\end{figure}
By assigning weights to these train tracks as we have already done in the proof of Proposition \ref{teorema foliazioni part 1} we have that if the label is $p_+$ the train track realises all the slopes in the interval $(\infty,1)$, while if the label is $p_-$ the slopes realised are those in the interval $(-1,\infty)$.

On the other hand if the boundary component has label $n$ the choice of the orientation yields two different train tracks. We represent the possible train tracks in Figure \ref{boundary label n}. Notice that the train tracks depend only on the orientation of the arcs, and not on the label $a)$ or $b)$ of the configuration.

\begin{figure}[H]
   \centering
    \includegraphics[width=0.65\textwidth]{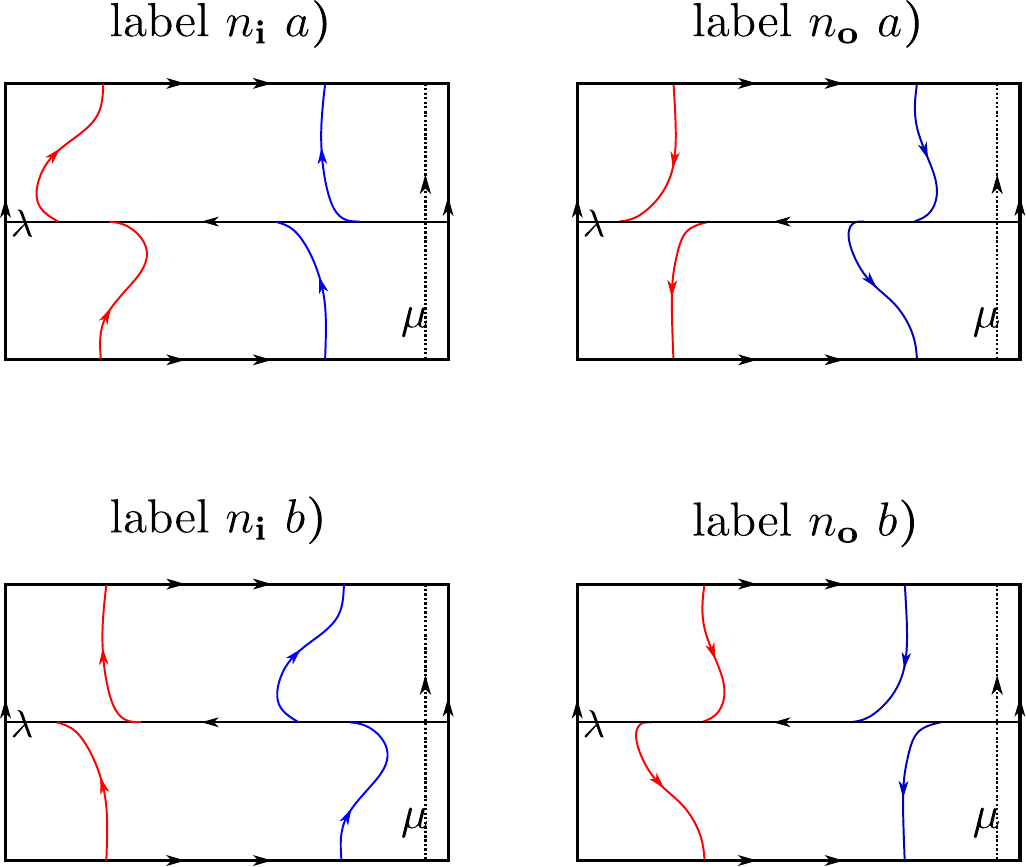}
    \caption{The possible boundary train tracks associated to a boundary component with label $n$. Notice that the train tracks depend only on the orientation of the arcs, and not on the label $a)$ or $b)$ of the configuration.}
    \label{boundary label n}
\end{figure}
The train tracks on the left realise all the slopes in the interval $(0,\infty)$, while those on the right realise the slopes in the interval $(\infty, 0)$. 

By fixing one or the other of the two possible coherent orientations, we have that the boundary train tracks of $B$ realise all the multislopes in $I_1\times\dots \times I_k$ and $J_1\times\dots \times J_k$.
By virtue of Lemma \ref{B is laminar_2}, we can apply Theorem \ref{boundary train tracks} to obtain the desired essential laminations and Lemma \ref{extending laminations} implies that these laminations extends to taut foliations of the filled manifolds.
\end{proof}

\begin{proof}[Proof of Theorem \ref{teorema foliazioni}]
The first part of the theorem is the content of Proposition \ref{teorema foliazioni part 1} and Lemma \ref{extending laminations}. The second part is the content of Proposition \ref{teorema foliazioni part 2}.
\end{proof}

\begin{ex}
For each natural number $n$ we consider the $n$-component oriented link $\mathcal{L}_n$ in figure. 

\begin{figure}[H]
   \centering
    \includegraphics[width=0.5\textwidth]{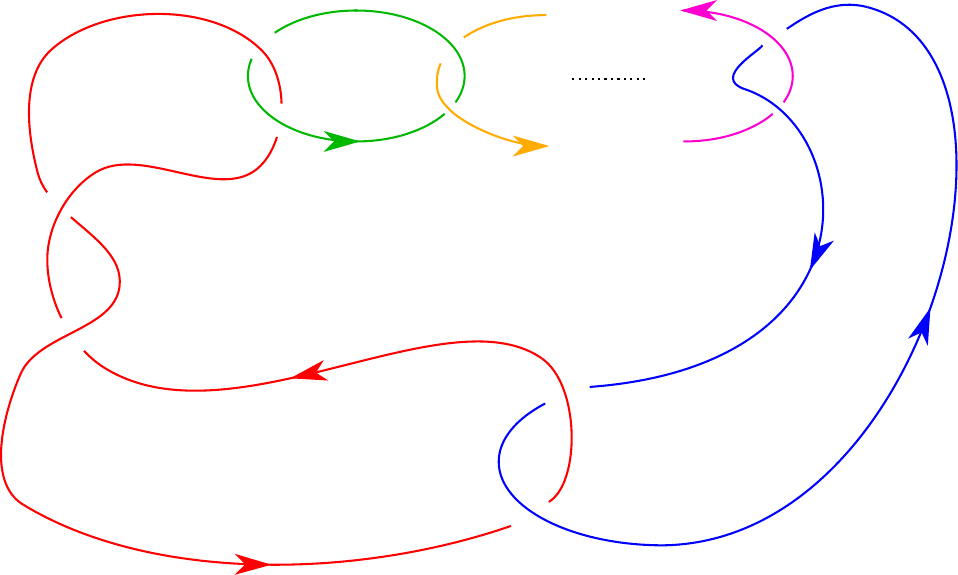}
    \caption{The link $\mathcal{L}_n$.}
    \label{Ln_before}
\end{figure}
We can represent the link $\mathcal{L}_n$ in a different way, as in Figure \ref{L_n}. With this representation, it is evident that $\mathcal{L}_n$ can be realised as a plumbing of Hopf bands. Therefore (see \cite{G1,S1}) $\mathcal{L}_n$ is a fibered link, with fiber surface a torus with $n$ open discs removed, and the monodromy associated to this fiber is 
$$
h=\tau_0^{-1}\tau_1\dots\tau_n
$$
where $\tau_i$ is the positive Dehn twist along the curve $\gamma_i$. 
\begin{figure}[H]
   \centering
    \includegraphics[width=0.6\textwidth]{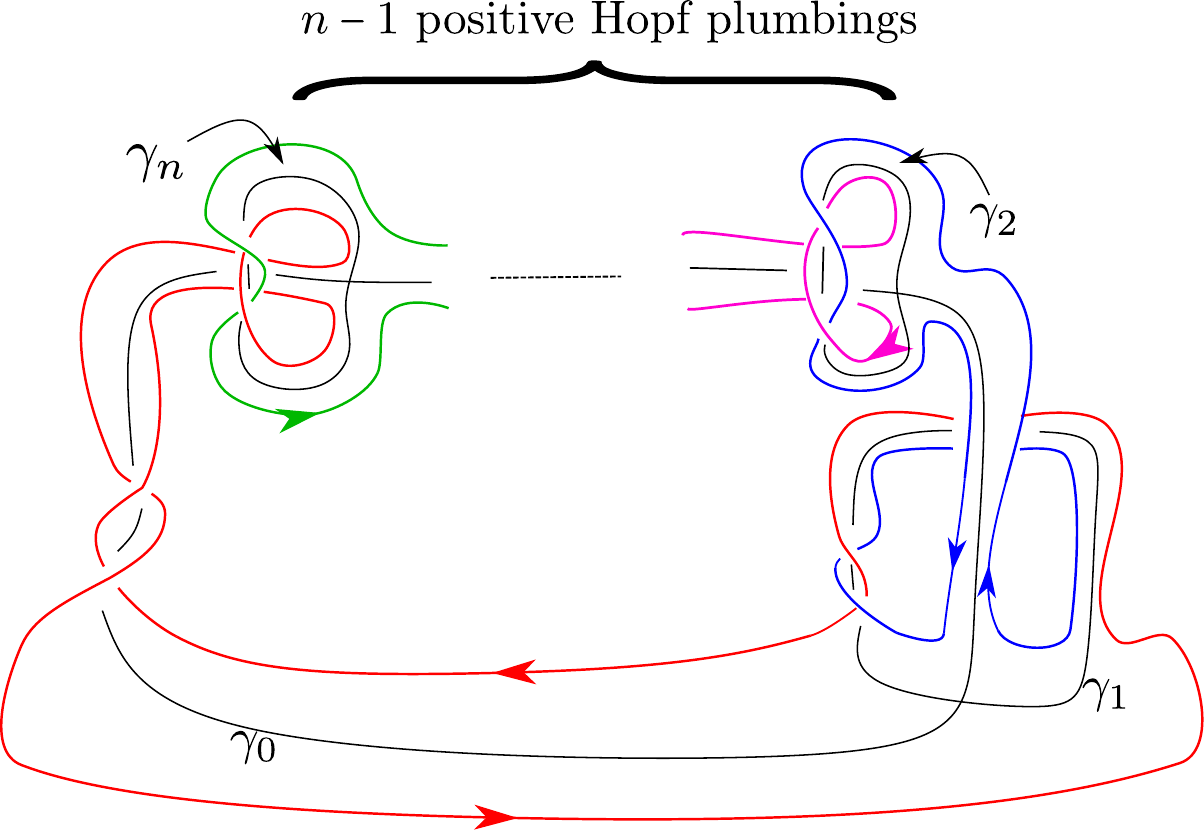}
    \caption{A description of $\mathcal{L}_n$ as a plumbing of Hopf bands.}
    \label{L_n}
\end{figure}

\begin{figure}[H]
   \centering
    \includegraphics[width=0.6\textwidth]{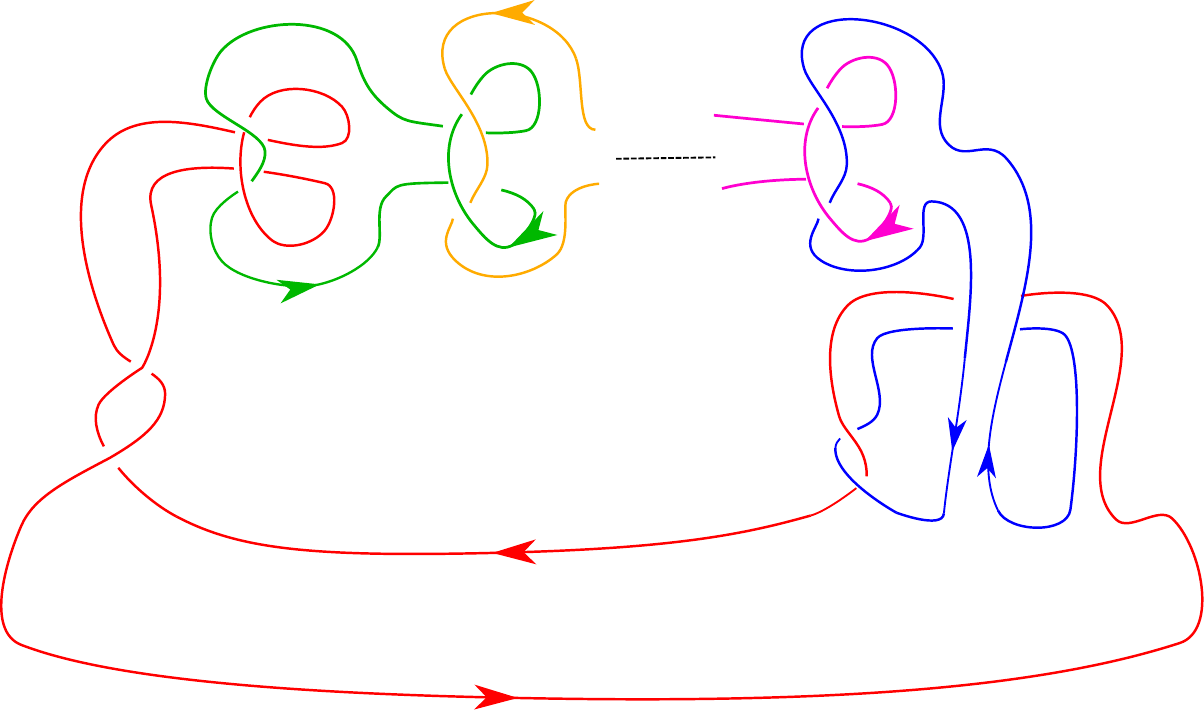}
    \caption{Another picture of the link $\mathcal{L}_n$. Without the cores of the Hopf bands it may be easier to see the isotopy to the link depicted in Figure \ref{Ln_before}.}
    \label{L_n without cores}
\end{figure}

We now prove that $\mathcal{L}_n$ is a hyperbolic link. We recall the following theorem of Penner \cite{P}: 
\begin{theorem}(\cite {P})
Suppose that $\mathcal{C}$ and $\mathcal{D}$ are each disjointly embedded collections of essential simple closed curves (with no parallel components) in an oriented surface
$F$ so that $\mathcal{C}$ hits $\mathcal{D}$ efficiently and $\mathcal{C}\cup \mathcal{D}$ fills $F$. Let $R(\mathcal{C}^+,\mathcal{D}^-)$ be the free
semigroup generated by the Dehn twists $\{\tau_c^{+1}: c\in \mathcal{C}\} \cup \{\tau_d^{-1}: d\in\mathcal{D}\}$. Each
component map of the isotopy class of $w\in R(\mathcal{C}^+ ,\mathcal{R}^-)$ is either the identity or
pseudo-Anosov, and the isotopy class of $w$ is itself pseudo-Anosov if each $\tau_c^{+1}$ and $\tau_d^{-1}$ occur at least once in $w$.
\end{theorem}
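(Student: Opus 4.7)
The plan is to follow the standard bigon-track approach and convert the problem into a Perron–Frobenius statement about a non-negative integer matrix. First I would build a train track $\tau = \tau(\mathcal{C},\mathcal{D}) \subset F$: at each point of $\mathcal{C} \cap \mathcal{D}$ smooth the four-valent vertex of $\mathcal{C}\cup\mathcal{D}$ by pairing the two branches of the $\mathcal{C}$-curve with each other and the two branches of the $\mathcal{D}$-curve with each other, producing two cusps. Because $\mathcal{C}$ hits $\mathcal{D}$ efficiently, the resulting $\tau$ is a well-defined bigon train track whose branches are the arcs of $\mathcal{C}\sqcup \mathcal{D}$ between consecutive intersections and whose switches are the points of $\mathcal{C}\cap \mathcal{D}$. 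One checks that $\tau$ is recurrent and that the filling assumption implies its complementary regions are bigons or polygons with at least one cusp, so that any lamination fully carried by $\tau$ will bind the parts of $F$ it touches.

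Next I would verify the carrying statement: for $c \in \mathcal{C}$ the image $\tau_c^{+1}(\tau)$ is carried by $\tau$, and symmetrically $\tau_d^{-1}(\tau)$ is carried by $\tau$ for $d \in \mathcal{D}$. Each carrying induces a linear map on the space $V(\tau)$ of transverse weights, given in the branch basis by a non-negative integer matrix. A direct local calculation near each intersection point (this is the one routine computation I would skip the details of) shows that the matrix of $\tau_c^{+1}$ has the form $I + N_c$ where $N_c$ is a rank-one non-negative matrix whose $(a,b)$-entry is the product of intersection multiplicities of branches $a,b$ with $c$; an analogous formula holds for $\tau_d^{-1}$. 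In particular, every generator acts by a non-negative matrix with diagonal entries $\geq 1$, so for any $w \in R(\mathcal{C}^+,\mathcal{D}^-)$ the matrix $M_w$ is non-negative with positive diagonal.

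The key step, which I expect to be the main obstacle, is showing that when every generator of $R(\mathcal{C}^+,\mathcal{D}^-)$ occurs in $w$ the matrix $M_w$ is \emph{primitive} in the Perron–Frobenius sense, so that its spectral radius $\lambda$ is a simple eigenvalue with strictly positive eigenvector and, moreover, satisfies $\lambda > 1$. Primitivity must be squeezed out of the filling hypothesis: given two branches $a,b$ of $\tau$, I would produce a chain of branches $a = a_0, a_1, \ldots, a_k = b$ in which each $a_i, a_{i+1}$ lie on a common twisting curve that appears in $w$, using that $\mathcal{C}\cup \mathcal{D}$ fills $F$ and that each curve of $\mathcal{C}$ or $\mathcal{D}$ has at least one twist in $w$; each such step contributes a positive entry in the relevant factor of $M_w$, so a sufficiently high power of $M_w$ has all entries positive. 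That $\lambda > 1$ then follows because the sum of each row of $M_w$ strictly exceeds $1$ on at least one row, by the same filling-based chain argument applied to any curve in $\mathcal{C}\cup\mathcal{D}$.

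Once primitivity is in hand, Perron–Frobenius produces a positive measure $\mu^+ \in V(\tau)$ with $M_w \mu^+ = \lambda \mu^+$, giving a measured lamination $\mathcal{L}^+$ fully carried by $\tau$ on which $w$ acts by dilation $\lambda$. The symmetric construction, applied to the transpose action (equivalently to the dual bigon track obtained by the other smoothing at each intersection), yields a transverse measured lamination $\mathcal{L}^-$ contracted by $\lambda^{-1}$. The filling hypothesis guarantees that $\mathcal{L}^+$ and $\mathcal{L}^-$ bind $F$ on the subsurface they support, which together with $\lambda > 1$ is, by Thurston's classification, precisely the data of a pseudo-Anosov representative of the isotopy class of $w$. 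To handle the component clause in the statement, I would restrict to the subsurface of $F$ filled by the support of the twists actually occurring in $w$: on a connected component disjoint from every such curve, $w$ restricts to the identity, while on a component meeting at least one twisting curve the filling condition of the statement passes to that component and the above argument yields a pseudo-Anosov restriction.
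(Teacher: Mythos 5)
This statement is not proved in the paper at all: it is Penner's theorem, quoted verbatim from \cite{P} purely as a tool to certify that the monodromies $\tau_0^{-1}\tau_1\cdots\tau_n$ in the example are pseudo-Anosov, so there is no in-paper proof to compare against. Your sketch is, in outline, exactly Penner's original argument: smooth $\mathcal{C}\cup\mathcal{D}$ to an invariant bigon track, record the twists as non-negative integer matrices of the form $I+N_c$ on the weight space, show the product for a word containing every generator is Perron--Frobenius using connectivity of the filling curve system, and extract the invariant expanding and contracting measured laminations (Penner obtains the second from tangential measures, i.e.\ the transpose action, which matches your remark, though your identification of this with ``the other smoothing'' is loose -- the cleaner alternative is to note $w^{-1}\in R(\mathcal{D}^{+},\mathcal{C}^{-})$ preserves the oppositely smoothed track). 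Two small wobbles worth fixing: the entries of $N_c$ pair branches lying \emph{on} $c$ with branches \emph{crossing} $c$, not a symmetric ``product of intersection multiplicities''; and ``some row sum exceeds $1$'' does not by itself force $\lambda>1$ -- the correct deduction is that $M_w\geq I$ entrywise, $M_w\neq I$ and $M_w$ primitive imply $(M_w-I)x=0$ is impossible for the positive Perron eigenvector $x$, hence $\lambda>1$. With those repairs, and the intersection-number argument (or Penner's invariant-track lemma) to rule out reducibility on each component filled by the twists occurring in $w$, your outline is a faithful reconstruction of the cited proof.
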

In the statement of the previous theorem, ``$\mathcal{C}\cup \mathcal{D}$ \emph{fills} $F$'' means that each component of the complement of $\mathcal{C}\cup \mathcal{D}$ is a disc, a boundary-parallel annulus, or a puncture-parallel punctured disc. Moreover ``\emph{$\mathcal{C}$ hits $\mathcal{D}$ efficiently}'' if there is no bigon in $F$ with boundary made of one arc of a curve $c\in \mathcal{C}$ and one arc of a curve $d\in \mathcal{D}$.

In our case we set $\mathcal{C}=\{\gamma_1,\dots, \gamma_n\}$ and $\mathcal{D}=\{\gamma_0\}$ and we can apply this theorem to deduce that the monodromy associated to $\mathcal{L}_n$ is a pseudo-Anosov map; applying Thurston \cite{Th2} we deduce these links are hyperbolic.

Moreover Theorem \ref{teorema foliazioni} applies to these links and we can deduce that for any multislope $(s_1,\dots,s_n)\in (\infty,1)^n \cup (-1,\infty)^n$, the filling of the exterior of $\mathcal{L}_n$ with multislope $(s_1,\dots,s_n)$ supports a coorientable taut foliation. Recall that these slopes are referred to the meridian-longitude bases given by the mapping torus; since the components of the link $\mathcal{L}_n$ do not have pairwise linking number zero, the longitudes of these bases do not coincide with the canonical longitudes of the link.
\end{ex}

\subsection{The Whitehead link case}\label{The Whitehead link case}

We now return to the Whitehead link exterior. Recall from the discussion preceding Figure \ref{hopf plumbing} that the Whitehead link is a fibered link with fiber surface a $2$-holed torus and monodromy $h=\tau_0\tau_1\tau_2^{-1}$, where the curves $\gamma_i$ are represented in Figure \ref{Abstract torus}. In what follow we will identify the exterior of the Whitehead link with the mapping torus $M_h$.

\begin{figure}[H]
   \centering
    \includegraphics[width=0.4\textwidth]{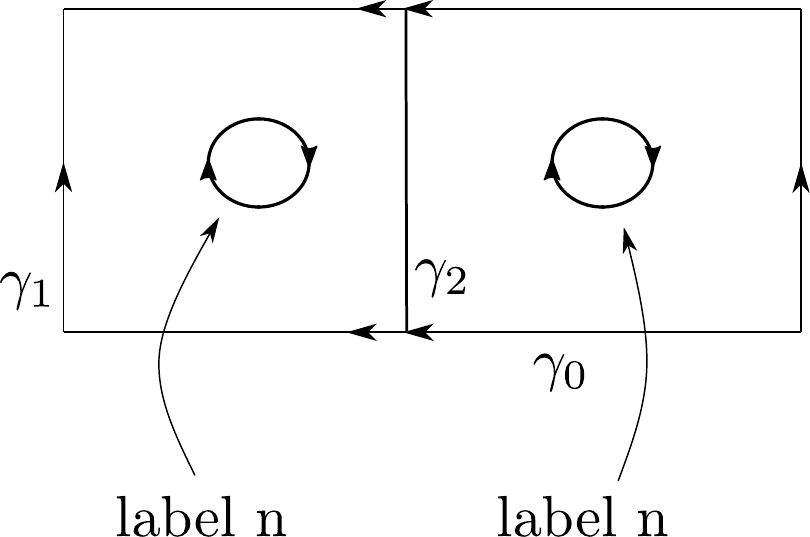}
    \caption{An abstract description of the fiber surface for the Whitehead link. The monodromy is given by $h=\tau_0\tau_1\tau_2^{-1}$. We have also indicated the labels of the boundary components of the torus.}
    \label{Abstract torus}
\end{figure}

As a consequence of Theorem \ref{teorema foliazioni} and of the fact that the components of the Whitehead link have linking number zero, we have

\begin{cor}\label{corollario quasi tutto}
Let $\left(\sfrac{p_1}{q_1},\sfrac{p_2}{q_2}\right)$ be a multislope in

$$
(\infty,1)^2 \quad \cup\quad  (0,\infty)\times(\infty,0)\quad \cup\quad (\infty,0)\times(0,\infty).
$$
Then $S^3_{\sfrac{p_1}{q_1},\sfrac{p_2}{q_2}}({\rm WL})$ supports a coorientable taut foliation.
\end{cor}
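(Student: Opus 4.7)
The plan is to apply Theorem \ref{teorema foliazioni} directly to the mapping torus description $M_h$ of the Whitehead link exterior, with monodromy $h = \tau_0 \tau_1 \tau_2^{-1}$. In the notation of the theorem this corresponds to $k = 2$, $a_0 = 1$, $a_1 = 1$, and $a_2 = -1$. Before anything else I would point out that since the linking number of the two components of ${\rm WL}$ is zero, the longitude $\partial_i S$ appearing in the fiber surface coincides (up to sign) with the canonical Seifert longitude of the $i$-th component. Hence multislopes measured in the mapping-torus basis agree with those in the link-theoretic basis $S^3_{\sfrac{p_1}{q_1},\sfrac{p_2}{q_2}}({\rm WL})$, and no translation between frames is needed.

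Since $a_0 = 1 > 0$, the first part of Theorem \ref{teorema foliazioni} immediately produces a coorientable taut foliation on $M_h(s_1,s_2)$ for every $(s_1,s_2) \in (\infty, 1)^2$. This accounts for the first summand of the target set.

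For the second part of the theorem I would compute the labels attached to the boundary components using the cyclic convention $a_3 = a_1$. The pair $(a_1,a_2) = (1,-1)$ has opposite signs, so $\partial_1 S$ has label $n$; the pair $(a_2,a_3) = (-1,1)$ also has opposite signs, so $\partial_2 S$ has label $n$ as well. Listing the $n$-labelled indices in ascending order gives $i_1 = 1$ and $i_2 = 2$, which by the definitions preceding Theorem \ref{teorema foliazioni} produces
\[
I_1 = (\infty, 0), \quad I_2 = (0, \infty), \quad J_1 = (0, \infty), \quad J_2 = (\infty, 0).
\]
Applying the second part of the theorem therefore yields coorientable taut foliations on $M_h(s_1,s_2)$ for every $(s_1,s_2) \in (I_1 \times I_2) \cup (J_1 \times J_2) = (\infty,0)\times(0,\infty) \,\cup\, (0,\infty)\times(\infty,0)$.

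Taking the union of the two ranges gives exactly the region in the statement, which completes the argument. There is no serious obstacle: the corollary is a direct specialization of Theorem \ref{teorema foliazioni}. The only mildly delicate point is the identification of the meridian-longitude basis from the mapping-torus construction with the standard basis on the link exterior, which is handled by the linking-number-zero observation above.
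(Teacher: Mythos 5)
Your proposal is correct and is exactly the paper's argument: the corollary is stated there as a direct consequence of Theorem \ref{teorema foliazioni} applied to $h=\tau_0\tau_1\tau_2^{-1}$ (so $a_0=1$, $a_1=1$, $a_2=-1$, both boundary components labelled $n$), together with the observation that linking number zero makes the mapping-torus longitudes agree with the canonical link longitudes. Your explicit computation of the labels and of the intervals $I_i$, $J_i$ matches what the paper's definitions give.
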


We have been able to prove that for slopes in the region depicted in Figure \ref{foliations}, the corresponding filling on the Whitehead exterior supports a coorientable taut foliation.

\begin{figure}[H]
   \centering
    \includegraphics[width=0.45\textwidth]{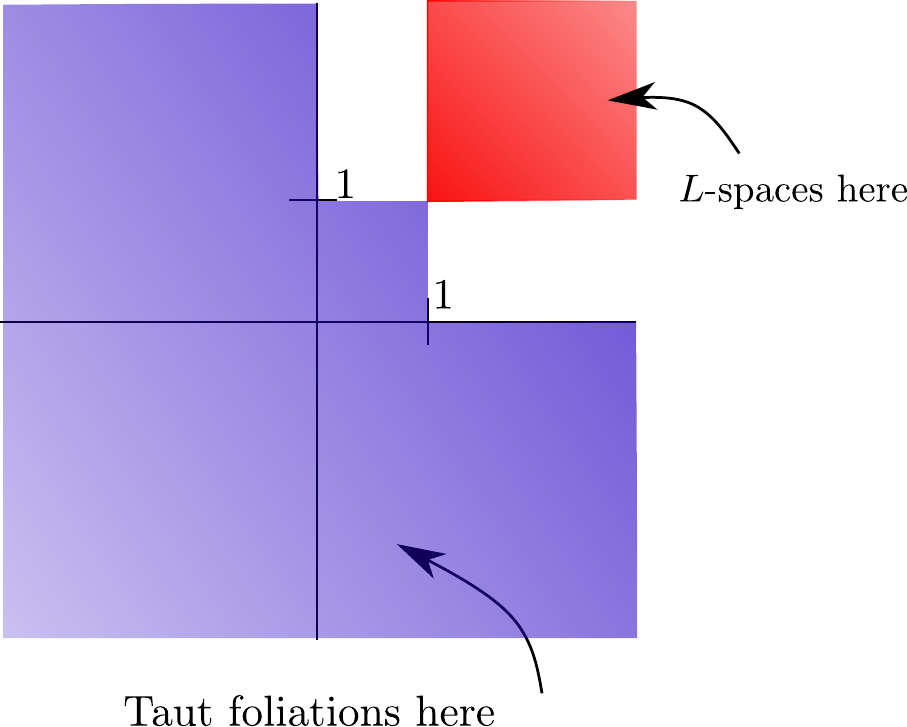}
    \caption{The figure describes what we have been able to prove up to now. The blue points are the slopes whose corrisponding filling supports a coorientable taut foliation; the red points are those whose corresponding filling is an $L$-space.}
    \label{foliations}
\end{figure}

We now cover the remaining regions of Figure \ref{foliations}. We define a branched surface by considering the pair of arcs $\alpha,\beta$ shown in Figure \ref{B_2}-(left). We apply the monodromy $h$ and we obtain what is depicted in Figure \ref{B_2}.

\begin{figure}[H]
   \centering
    \includegraphics[width=1\textwidth]{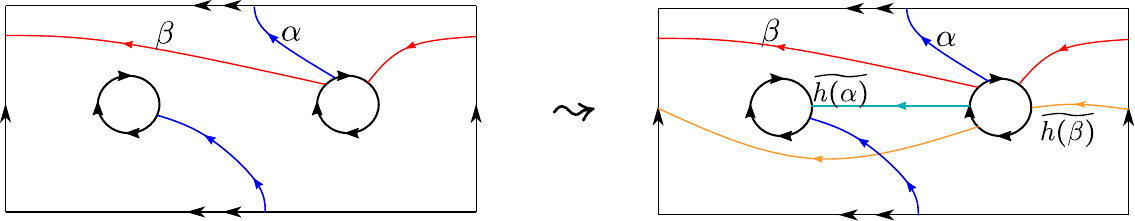}
    \caption{The arcs $\alpha$ and $\beta$ and their (perturbed) images via $h$.}
    \label{B_2}
\end{figure}

As usual, we consider the branched surface $B$ associated to the arcs $\alpha$ and $\beta$.

\begin{lemma}
The branched surface $B$ is laminar and satisfies the hypotheses of Theorem \ref{boundary train tracks}.
\end{lemma}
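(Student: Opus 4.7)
The strategy mirrors the proofs of Lemma \ref{B is laminar} and Lemma \ref{B is laminar_2}. By Lemma \ref{lemma laminar}, once we check that the arcs $\alpha,\beta$ cut $S$ into pieces none of which is a disc, we obtain for free that $B$ has no trivial bubbles and satisfies conditions $1$--$4$ of Definition \ref{laminar}. A glance at Figure \ref{B_2}-(left) confirms this: $S\setminus(\alpha\cup\beta)$ is a disjoint union of pieces that each contain part of $\partial S$ and none of which is a disc. So the remaining work is condition $5$, together with the two structural hypotheses of Theorem \ref{boundary train tracks}, namely that $\partial M_h\setminus\partial B$ is a union of bigons and that $B$ carries no torus bounding a solid torus in any filling.

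First I would catalogue the sectors of $B$. There are two half-disc sectors coming from the images of $\alpha\times[0,1]$ and $\beta\times[0,1]$ in $M_h$; by construction of the smoothing (see Figure \ref{smoothings}) these sectors have at least one cusp direction pointing out of them, so neither is a (half) sink disc. The remaining sectors are the abstract closures of the connected components of $S\setminus\bigl(\alpha\cup\beta\cup\widetilde{h(\alpha)}\cup\widetilde{h(\beta)}\bigr)$, which one reads off directly from Figure \ref{B_2}-(right). For each such sector I would verify, by inspecting the picture, that at least one smooth arc in its boundary carries a cusp direction pointing out of the sector. The key observation is that along $\alpha$ and $\beta$ the cusp points to one side and along $\widetilde{h(\alpha)}$ and $\widetilde{h(\beta)}$ it points to the opposite side (as in the discussion surrounding Figure \ref{no sink}), so one checks sector by sector that no sector is ``surrounded'' by inward-pointing cusps. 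This is the main obstacle, and it is really a finite combinatorial check on Figure \ref{B_2}; no sector is a disc disjoint from $\partial M_h$ with all cusps inward, and no sector touching $\partial M_h$ is a half sink disc.

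Next, to verify that $\partial M_h\setminus\partial B$ is a union of bigons, I would examine the two train tracks cut out on the two boundary tori of $M_h$ by the endpoints of $\alpha,\beta,\widetilde{h(\alpha)},\widetilde{h(\beta)}$ together with the vertical meridional arcs of the mapping torus structure. As in the proofs of Propositions \ref{teorema foliazioni part 1} and \ref{teorema foliazioni part 2}, one reads off from Figure \ref{B_2} that each complementary region in $\partial M_h$ is a bigon.

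Finally, to rule out that $B$ carries a closed surface bounding a solid torus after any filling, I would repeat the weight-system argument used in Lemmas \ref{B is laminar} and \ref{B is laminar_2}. Suppose $\Sigma\subset M_h$ is a compact properly embedded surface carried by $B$, giving a non-negative integer weight system. Let $\omega_\alpha,\omega_\beta$ be the weights of the two half-disc sectors. Ordering the sectors of $B$ contained in $S$ along the common direction of the cusps on $\alpha,\beta,\widetilde{h(\alpha)},\widetilde{h(\beta)}$ and using the switch relations, one derives a cyclic chain of inequalities $\delta_1\le\delta_2\le\cdots\le\delta_N\le\delta_1$ forcing the $\delta_i$ to be equal, whence $\omega_\alpha=\omega_\beta=0$ and $\Sigma$ is a finite number of parallel copies of $S$. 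In particular $B$ carries no closed surface, so the last hypothesis of Theorem \ref{boundary train tracks} is vacuous. Combined with the fact that $M_h$ is irreducible with incompressible toroidal boundary, this completes the verification.
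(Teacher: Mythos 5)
Your outline matches the paper's proof for the first three steps: the paper likewise invokes Lemma \ref{lemma laminar} after noting that $S\setminus(\alpha\cup\beta)$ has no disc components, disposes of sink discs and half sink discs by a direct inspection of the cusp directions in the figure (Figure \ref{B_2 no sink}), and observes that $\partial M_h\setminus\partial B$ is a union of bigons by construction. One small factual point worth recording from that inspection: for this particular pair of arcs the branched surface has exactly five sectors, \emph{all} of which are half discs, i.e.\ every sector of $B$ meets $\partial M_h$.

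Where you genuinely diverge is the last hypothesis of Theorem \ref{boundary train tracks}. You propose to rerun the weight-system argument of Lemmas \ref{B is laminar} and \ref{B is laminar_2}, ordering the sectors in $S$ ``along the common direction of the cusps'' and extracting a cyclic chain of inequalities. That argument was tailored to the structured arc systems of Section 3.2 (parallel arcs, or arcs with a coherent orientation), where the common cusp direction and the cyclic ordering of sectors inside the annuli $A_i$ are built into the construction; for the ad hoc pair $\alpha,\beta$ of Figure \ref{B_2} neither the ordering nor the chain of switch relations is justified as stated, so as written this step has a gap. Fortunately it is also unnecessary: since every sector of $B$ intersects $\partial M_h$, any nonempty surface carried by $B$ has positive weight on some sector meeting $\partial M_h$ and hence itself meets $\partial M_h$; therefore $B$ carries no closed surface at all, in particular no torus. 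This is exactly the one-line argument the paper uses, and I recommend replacing your weight-system computation with it.
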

\begin{proof}
Since $S\setminus(\alpha \cup \beta)$ has no disc components, by virtue of Lemma \ref{lemma laminar} we only need to prove that $B$ contains no sink disc or half sink discs and this is showed in Figure \ref{B_2 no sink}.

\begin{figure}[H]
   \centering
    \includegraphics[width=0.5\textwidth]{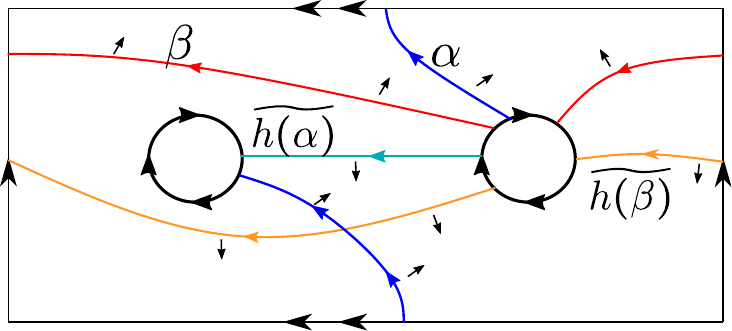}
    \caption{The sectors of $B$ are five half discs. The figure also describes the cusp directions of $B$ and it is easy to check that none of these half discs is sink.}
    \label{B_2 no sink}
\end{figure}
To prove that $B$ satisfies the hypotheses of Theorem \ref{boundary train tracks} we have to show that $\partial M_h\setminus \partial B$ is a union of bigons and that $B$ does not carry a torus.
By construction $\partial M_h\setminus \partial B$ is union of bigons (see Figure \ref{boundary B_2}) and since each sector of $B$ intersect $\partial M_h$ it follows that any surface carried by $B$ must intersect $\partial M_h$. Therefore $B$ does not carry any closed surfaces and in particular it does not carry tori.
\end{proof}

\begin{cor}\label{theorem part 2.2}
Let $\left(\sfrac{p_1}{q_1},\sfrac{p_2}{q_2}\right)$ be slopes in
$$
(0, \infty) \times (-1,1) \cup (-1,1)\times (0, \infty)
$$
Then the filled manifold $S^3_{\sfrac{p_1}{q_1}, \sfrac{p_2}{q_2}}({\rm WL})$ supports a coorientable taut foliation.
\end{cor}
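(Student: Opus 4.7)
The plan is to mimic the argument used to prove Propositions \ref{teorema foliazioni part 1} and \ref{teorema foliazioni part 2}. Since the previous lemma already establishes that the branched surface $B$ (built from the arcs $\alpha$ and $\beta$ of Figure \ref{B_2}) is laminar, satisfies the hypotheses of Theorem \ref{boundary train tracks}, and carries no torus, all that remains is to analyze the boundary train tracks $\partial B \subset \partial M_h$ on each of the two boundary tori $T_1, T_2$ and to determine the set of multislopes fully realized by $\partial B$. Once this set is identified, Theorem \ref{boundary train tracks} produces essential laminations fully carried by $B$ and intersecting $\partial M_h$ in curves of the given multislope, and Lemma \ref{extending laminations} promotes these laminations to coorientable taut foliations of the filled manifolds.

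The main step is therefore to draw $\partial B \cap T_i$ explicitly. Since the two components of the Whitehead link have linking number zero, the meridian--longitude bases $(\mu_i,\lambda_i)$ coming from the mapping torus structure coincide with the standard canonical meridian--longitude bases of the link, so slopes with respect to these bases are exactly the surgery slopes $\sfrac{p_i}{q_i}$. The arc $\alpha$ has one endpoint on $\partial_1 S$ and one on $\partial_2 S$, while the arc $\beta$ has both endpoints on one of the boundary components (this is the only essentially different case up to the symmetry exchanging the two components); similarly one reads off $h(\alpha)$ and $h(\beta)$ from Figure \ref{B_2} using $h=\tau_0\tau_1\tau_2^{-1}$. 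Fixing a coorientation of $\alpha$ and $\beta$ and then smoothing $S\cup(\alpha\times[0,1])\cup(\beta\times[0,1])$, one produces $\partial B \cap T_i$ as an oriented train track whose switches correspond to the endpoints of $\alpha, h(\alpha), \beta, h(\beta)$ on $\partial_i S$, with cusp directions inherited from the discs.

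I would then assign a strictly positive rational weight to each branch of $\partial B\cap T_i$ subject to the switch conditions, and compute the ratio $w_{\mu_i}/w_{\lambda_i}$ as in the proof of Proposition \ref{teorema foliazioni part 1}. By the sink-disc analysis of Figure \ref{B_2 no sink} there are no sign constraints forced by tautness between the two sides, so the weight system on $T_1$ and the weight system on $T_2$ can be chosen independently, yielding a product range of realizable slopes. The expected picture is that on the torus where both endpoints of $\beta$ lie the range of realizable slopes is the interval $(-1,1)$ (the presence of $\beta$ and $h(\beta)$ introduces an extra switch that bounds the slope from both sides), while on the other torus, crossed only by the $\alpha$-arcs, the range is $(0,\infty)$. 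By symmetry of the Whitehead link exchanging its components, the roles of $T_1$ and $T_2$ can be swapped, thereby producing the second rectangle $(-1,1)\times(0,\infty)$.

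The technical obstacle I anticipate is not the existence argument (which is formal once the train track is drawn) but the slope bookkeeping: carefully identifying the signs and orientations so that the resulting intervals are exactly $(-1,1)$ and $(0,\infty)$, rather than, say, $(\infty,1)$ or $(-1,\infty)$. In particular I must check that the weight inequalities $w>0$ on the switches of $\partial B\cap T_i$ translate to \emph{strict} inequalities on both endpoints of the slope interval (so that all intermediate slopes are realized and none are missed), and that the identification of $(\mu_i,\lambda_i)$ as used in the previous section matches the one used for the statement of Theorem \ref{theorem part 2}. Once the slope computation is verified, the corollary follows immediately by combining Theorem \ref{boundary train tracks} with Lemma \ref{extending laminations}, in direct analogy with Proposition \ref{teorema foliazioni part 2}.
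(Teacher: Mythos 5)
Your proposal follows essentially the same route as the paper: the actual proof draws the two boundary train tracks of this $B$, assigns strictly positive weights to see that they realise exactly the multislopes in $(0,\infty)\times(-1,1)$, invokes Theorem \ref{boundary train tracks}, and then appeals to the symmetry of the Whitehead link for the second rectangle $(-1,1)\times(0,\infty)$. The only step to tighten is the final one: Lemma \ref{extending laminations} is stated for the laminations of Proposition \ref{teorema foliazioni part 1}, so rather than citing it verbatim the paper observes that every sector of this particular $B$ meets $\partial M_h$, hence every leaf of the resulting foliation of $M_h$ meets the boundary, and after capping with meridional discs the cores of the filling solid tori are closed transversals meeting all leaves --- which yields tautness directly, without any discussion of compact leaves.
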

\begin{proof}
By virtue of Theorem \ref{boundary train tracks} for any multislope $(s_1,s_2)$ realised by the boundary train tracks of $B$ there exists an essential lamination $\Lambda$ fully carried by $B$ intersecting the boundary of the exterior of the Whitehead link in parallel curves of multislope $(s_1,s_2)$. The boundary train tracks of $B$ are the following:
\begin{figure}[H]
   \centering
    \includegraphics[width=0.7\textwidth]{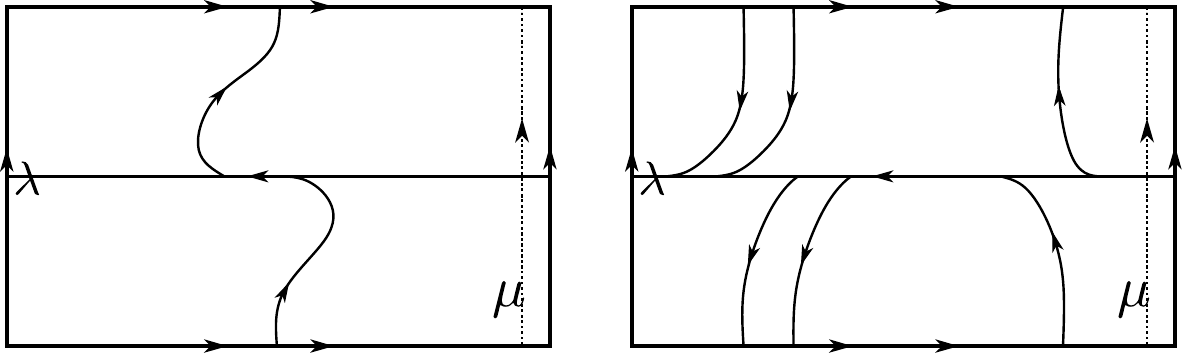}
    \caption{The boundary train tracks of $B$.}
    \label{boundary B_2}
\end{figure}

By assigning weights to these train tracks as in Figure \ref{boundary specifico pesi} it follows that these train tracks realise all the slopes in $(0,\infty)\times (-1,1)$.
\begin{figure}[H]
   \centering
    \includegraphics[width=0.7\textwidth]{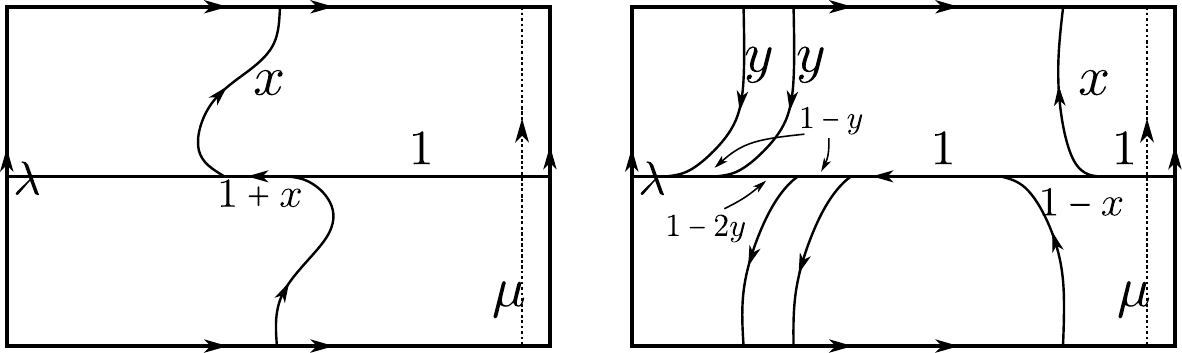}
    \caption{The boundary train tracks of $B$ with weight systems.}
    \label{boundary specifico pesi}
\end{figure}

Let $\Lambda$ be an essential lamination intersecting the boundary of $M_h$ in parallel curves of one of these multislopes $(s_1,s_2)$. Since each leaf of $\Lambda$ is carried by $B$ and each sector of $B$ intersects $\partial M_h$ it follows that all the leaves of $\Lambda$ intersect $\partial M_h$. It follows by the proof of Lemma \ref{extending laminations} that we can construct a foliation of $M_h$ such that each leaf of this foliation is parallel to some leaf of $\Lambda$. Therefore all the leaves of these foliation intersect $\partial M$ and as a consequence when we cap these leaves with the meridional discs of the solid tori we obtain a foliation of $M_h(s_1,s_2)$ with the property that the cores of the tori are transversals intersecting all the leaves.

We have proved that for any $\left(\sfrac{p_1}{q_1},\sfrac{p_2}{q_2}\right)\in (0,\infty)\times (-1,1)$ the manifold $S^3_{\sfrac{p_1}{q_1}, \sfrac{p_2}{q_2}}({\rm WL})$ supports a coorientable taut foliation. Since the Whitehead link is symmetric we deduce that the same result holds also for any $\left(\sfrac{p_1}{q_1},\sfrac{p_2}{q_2}\right)\in (-1,1)\times (0,\infty)$.
\end{proof}

\begin{proof}[Proof of Theorem \ref{theorem part 2}]
It is a consequence of Corollary \ref{corollario quasi tutto} and Corollary \ref{theorem part 2.2}.
\end{proof}

\section{Orderability}

In this last section we will prove Theorem \ref{theorem euler} and discuss some results about the orderability, and non-orderability, of some surgeries on the Whitehead link. We recall the following definition:

\begin{definition}
Let $G$ be a group. $G$ is \textbf{left orderable} if there exists a total order $<$ on $G$ that is invariant for the left multiplication by elements in $G$, i.e. such that for any $g,g'\in G$ we have that $g < g'$ if and only if $hg< hg'$ $\forall h \in G$.
\end{definition}

If $G$ is the fundamental group of a closed, orientable, irreducible $3$-manifold, left orderability translates in the following dynamical property. 

\begin{theorem}\cite{BRW}\label{characterization of orderability}
Let $N$ be a closed, irreducible, orientable $3$-manifold. Then $\pi_1(N)$ is left orderable if and only if there exists a non-trivial homomorphism $\varphi:\pi_1(N)\rightarrow {\rm Homeo}^+(\R)$.
\end{theorem}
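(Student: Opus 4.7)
The plan is to prove the two implications separately.

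For the forward direction ($\Rightarrow$), I would use the standard dynamical realisation of countable left-orderable groups. Since $N$ is compact, $\pi_1(N)$ is finitely presented and hence countable; enumerate its elements as $g_0,g_1,g_2,\ldots$ and place $g_0$ at $0\in\R$. Inductively I embed each $g_{n+1}$ into $\R$ at a real number sitting in the open gap of $\{g_0,\ldots,g_n\}$ prescribed by the given left order on $\pi_1(N)$ (placing it beyond the current extremes on the appropriate side when $g_{n+1}$ is larger or smaller than all previously placed elements). Left multiplication by $\pi_1(N)$ on itself transports this embedded countable set in an order-preserving way, and this partial action extends piecewise-linearly and monotonically to an action on all of $\R$ by orientation-preserving homeomorphisms, yielding the desired non-trivial $\varphi$.

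For the reverse direction ($\Leftarrow$), set $K=\ker\varphi$ and $Q=\varphi(\pi_1(N))\leq{\rm Homeo}^+(\R)$. The first observation is that ${\rm Homeo}^+(\R)$ is torsion-free: if $f\ne \mathrm{id}$ then $f(x)>x$ or $f(x)<x$ for some $x$, and iterating forbids $f^n=\mathrm{id}$. Hence $Q$ is infinite, so $K$ has infinite index in $\pi_1(N)$. Moreover, any countable subgroup of ${\rm Homeo}^+(\R)$ is left-orderable: for a fixed dense sequence $(x_n)_{n\geq 1}\subset\R$, the rule that $f<g$ iff $f(x_{n_0})<g(x_{n_0})$, where $n_0$ is the smallest index with $f(x_n)\ne g(x_n)$, defines a left-invariant total order (left invariance uses that elements of ${\rm Homeo}^+(\R)$ are bijections which preserve order on $\R$). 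Thus $Q$ is left-orderable.

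The crucial step is showing that $K$ itself is left-orderable. Since $K$ has infinite index in $\pi_1(N)$, it is the fundamental group of a non-compact, orientable, irreducible covering space $\widetilde{N}\to N$. By a theorem of Howie (using Scott's compact core theorem and Stallings' structure theorem for ends of groups), the fundamental group of any non-compact $3$-manifold is \emph{locally indicable}, meaning every non-trivial finitely generated subgroup admits a surjection onto $\Z$. The Burns-Hale theorem then implies that locally indicable groups are left-orderable, so $K$ is left-orderable. This step, resting on non-trivial $3$-manifold topology, is the main obstacle.

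To conclude, I invoke the classical fact that extensions of left-orderable groups are left-orderable: given left orders $<_K$ on $K$ and $<_Q$ on $Q$, the rule
$$
g<h\ \Longleftrightarrow\ \varphi(g)<_Q\varphi(h),\ \text{ or }\ \varphi(g)=\varphi(h)\text{ and }g^{-1}h>_K e,
$$
defines a total order on $\pi_1(N)$; left invariance follows from the identity $(ag)^{-1}(ah)=g^{-1}h$ combined with the left invariance of $<_Q$ transported through $\varphi$. Hence $\pi_1(N)$ is left-orderable, completing the proof.
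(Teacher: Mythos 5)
The paper offers no proof of this statement --- it is quoted from Boyer--Rolfsen--Wiest \cite{BRW} --- so there is nothing internal to compare against; your argument is essentially the standard proof from that reference and is correct in outline. The forward direction via dynamical realisation of a countable left-ordered group is fine (just note that left-orderability presupposes $\pi_1(N)\neq 1$, so the resulting action is indeed non-trivial, in fact faithful). For the converse, two points deserve tightening. First, ``the fundamental group of any non-compact $3$-manifold is locally indicable'' is false as literally stated ($\mathbb{RP}^3$ minus a point has fundamental group $\Z/2$): what you need is that the cover corresponding to $K=\ker\varphi$ is orientable \emph{and irreducible}, which you do have, but irreducibility of covers of irreducible orientable $3$-manifolds is itself a theorem (Meeks--Simon--Yau), and it is precisely what guarantees that the Scott compact core of the cover of a finitely generated subgroup has non-empty, sphere-free boundary, so that ``half lives, half dies'' produces the surjection onto $\Z$. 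Second, handle the degenerate case $K=1$, where the lexicographic order reduces to the pullback of $<_Q$. The only structural difference from \cite{BRW} is that they apply the Burns--Hale criterion once, directly to $\pi_1(N)$, splitting finitely generated subgroups according to whether $\varphi$ restricts to them trivially, whereas you order $K$ and $Q=\mathrm{im}\,\varphi$ separately and then invoke the extension lemma; both routes rest on the same key input (Burns--Hale plus the compact core argument), and yours is a perfectly valid repackaging.
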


This result yields us a theoretical way to connect taut foliations to left orderability in the following way. Suppose that $\mathcal{F}$ is a cooriented taut foliation on a rational homology $3$-sphere $N$. We can associate to $\mathcal{F}$ its tangent bundle $T\mathcal{F}$, that is a plane bundle over $N$. Being a plane bundle, we can associate to $T\mathcal{F}$ its Euler class $e(T\mathcal{F})\in H^2(N;\Z)$. Moreover, by a construction of Thurston (see \cite{CD}), it is possible to associate to $\mathcal{F}$ a non-trivial homomorphism 
$$
\varphi:\pi_1(N)\rightarrow {\rm Homeo}^+(S^1).
$$
Since there is an injective homomorphism from the universal cover $\widetilde{{\rm Homeo}^+(S^1)}$ of ${\rm Homeo}^+(S^1)$ into ${\rm Homeo}^+(\R)$, one would like to lift $\varphi$ to a homomorphism 
$$
\Tilde{\varphi}:\pi_1(N)\rightarrow \widetilde{{\rm Homeo}^+(S^1)}.
$$
The obstruction to find such a lift is again a cohomology class in $H^2(N;\Z)$, and it turns out that this class vanishes if and only if $e(T\mathcal{F})=0$. For more details we refer to \cite{BH}.

The upshot of the previous discussion is the following

\begin{theorem}\label{euler class}\cite{BH}
Let $N$ be a rational homology sphere and let $\mathcal{F}$ be a coorientable taut foliation on $N$. If the Euler class of $\mathcal{F}$ vanishes then $\pi_1(N)$ is left orderable.
\end{theorem}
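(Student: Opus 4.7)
The plan is to invoke Theorem \ref{characterization of orderability}, which reduces left orderability of $\pi_1(N)$ to producing a non-trivial homomorphism $\pi_1(N) \to {\rm Homeo}^+(\R)$. Note that $N$ is automatically irreducible: a taut foliation is Reeb-free, and by a theorem of Rosenberg a closed orientable $3$-manifold carrying a Reeb-free foliation is either irreducible or $S^2 \times S^1$, and the latter is excluded since $N$ is a rational homology sphere. Thus the hypotheses of Theorem \ref{characterization of orderability} are met.

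The starting point is Thurston's universal circle construction already recalled in the excerpt, which associates to $\mathcal{F}$ a non-trivial action $\varphi\colon\pi_1(N) \to {\rm Homeo}^+(S^1)$. The idea is to lift $\varphi$ through the central extension
\[
1 \to \Z \to \widetilde{{\rm Homeo}^+(S^1)} \to {\rm Homeo}^+(S^1) \to 1
\]
to a homomorphism $\tilde\varphi\colon\pi_1(N) \to \widetilde{{\rm Homeo}^+(S^1)}$, and then postcompose with the canonical embedding $\widetilde{{\rm Homeo}^+(S^1)} \hookrightarrow {\rm Homeo}^+(\R)$ identifying the universal cover with the orientation-preserving homeomorphisms of $\R$ that commute with integer translations. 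Standard obstruction theory for central extensions places the primary obstruction to such a lift in $H^2(\pi_1(N);\Z) = H^2(N;\Z)$, where the equality holds because $N$ is closed, orientable, irreducible, and has infinite fundamental group (so it is aspherical by the sphere theorem). Concretely the obstruction equals $\varphi^{*}(e)$, where $e \in H^2(B{\rm Homeo}^+(S^1);\Z)$ classifies the extension above.

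The main obstacle is identifying this pullback class with the Euler class $e(T\mathcal{F}) \in H^2(N;\Z)$ of the tangent plane field. The reason one expects such an identification is structural: Thurston's universal circle is built from circles of asymptotic directions along leaves of $\tilde{\mathcal{F}}$ in $\tilde N$, so the flat $S^1$-bundle over $N$ associated to $\varphi$ is equivariantly isomorphic, as an oriented circle bundle, to the unit tangent circle bundle $S(T\mathcal{F})$ of the oriented $2$-plane distribution tangent to $\mathcal{F}$. Passing to Euler classes gives $\varphi^{*}(e) = e(T\mathcal{F})$. This identification, which is the genuine geometric content of the proof, would be extracted from \cite{CD} and \cite{BH}; everything else in the argument is formal.

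Once the identification is in hand, the hypothesis $e(T\mathcal{F}) = 0$ kills the obstruction and produces a lift $\tilde\varphi$ and, by postcomposition, a homomorphism $\Phi\colon\pi_1(N) \to {\rm Homeo}^+(\R)$. To conclude, observe that $\Phi$ must be non-trivial: composing $\Phi$ back with the projection $\widetilde{{\rm Homeo}^+(S^1)} \to {\rm Homeo}^+(S^1)$ recovers $\varphi$, so triviality of $\Phi$ would force triviality of $\varphi$, contradicting Thurston's construction. Theorem \ref{characterization of orderability} then yields the left orderability of $\pi_1(N)$, completing the proof.
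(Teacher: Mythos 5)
Your argument follows exactly the route the paper itself sketches in the discussion preceding the statement: Thurston's universal circle action $\varphi\colon\pi_1(N)\to{\rm Homeo}^+(S^1)$, the lifting obstruction in $H^2(N;\Z)$ identified with $e(T\mathcal{F})$, and the embedding of $\widetilde{{\rm Homeo}^+(S^1)}$ into ${\rm Homeo}^+(\R)$ combined with Theorem \ref{characterization of orderability}. The paper does not prove the result but defers to \cite{BH} for precisely the details you flag (the identification of the pulled-back extension class with $e(T\mathcal{F})$), so your proposal is correct and matches the intended proof.
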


We now consider the taut foliations obtained in the previous section and determine which of them have vanishing Euler class. To do this we will adapt part of the content of \cite{H} to our context.

We fix some notation. We denote with $M$ the exterior of {\rm WL} and we denote with $S$ the $2$-holed torus of Figure \ref{hopf plumbing} that is a Seifert surface for {\rm WL}. We fix a multislope $\left(\sfrac{p_1}{q_1},\sfrac{p_2}{q_2}\right)$, with $\sfrac{p_1}{q_1}<1$ or $\sfrac{p_2}{q_2}<1$, and we denote with $\mathcal{F}$ the foliation in $M$ intersecting $\partial M$ in parallel curves of multislope $\left(\sfrac{p_1}{q_1},\sfrac{p_2}{q_2}\right)$, as constructed in the proof of Theorem \ref{theorem part 2}. 
This foliation extends to a foliation $\hat{\mathcal{F}}$ of the filled manifold $S^3_{\sfrac{p_1}{q_1}, \sfrac{p_2}{q_2}}({\rm WL})$ so that in the glued solid tori $N_1$ and $N_2$ the foliation $\hat{\mathcal{F}}$ restricts to the standard foliations $\mathcal{D}_1$ and $\mathcal{D}_2$, which are the foliations by meridional discs. We can suppose without loss of generality that $p_1,p_2>0$. We orient the meridional disc $D_i$ of $N_i$ so that the gluing map identifies $\partial D_i$ with the oriented curve $p_i\mu_i +q_i\lambda_i$ in $\partial M$. 
\newline

The second homology group $H_2(M,\partial M; \Z)$ is isomorphic to $\Z^2$ and in particular we can fix as generators two properly embedded surfaces $S_1$ and $S_2$ that are duals to the meridians of the two components of the Whitehead link. Since the Whitehead link has linking number zero, these surfaces can be taken to be Seifert surfaces for the components of the link. In particular, these can be chosen to be tori with one disc removed, so that $\partial S_i=\lambda_i$. One of these tori is showed in Figure \ref{Toro_Whitehead} and the other can be obtained by an isotopy of $S^3$ exchanging the two components of {\rm WL}.

\begin{figure}[H]
   \centering
    \includegraphics[width=0.6\textwidth]{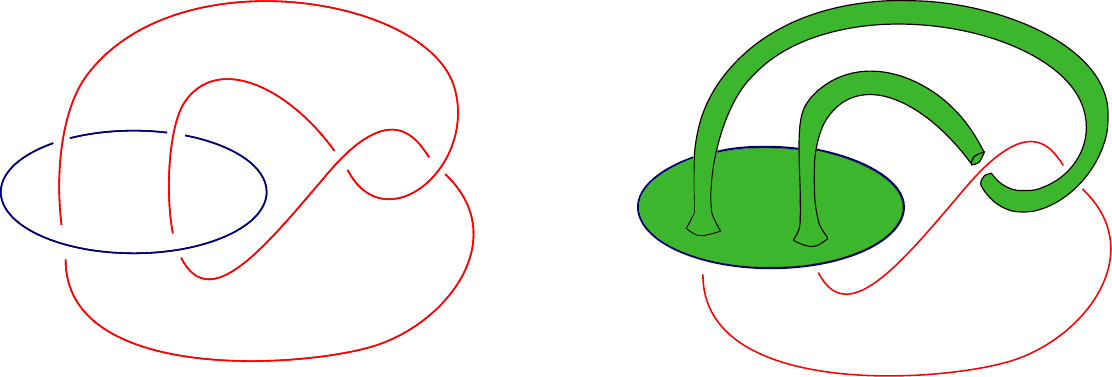}
    \caption{The $1$-holed torus depicted in this figure is one of the two generators of $H_2(M,\partial M;\Z)$.}
    \label{Toro_Whitehead}
\end{figure}

We fix a nowhere vanishing section $\sigma$ of $(T\mathcal{F})_{|\partial M}$ that is everywhere pointing outside of $M$. Hence the restrictions of $\sigma$ to the boundary components of $M$ also define nowhere vanishing sections $\sigma_i$ of $(T\mathcal{D}_i)_{|\partial N_i}$ everywhere pointing inside $N_i$, for $i=1,2$. These sections yield us relative Euler classes in $H^2(M,\partial M; \Z)$, $H^2(N_1,\partial N_1; \Z)$ and $H^2(N_2,\partial N_2; \Z)$, that we denote respectively with $e_{\sigma}(T\mathcal{F})$, $e_{\sigma_1}(T\mathcal{D}_1)$ and $e_{\sigma_2}(T\mathcal{D}_2).$ See \cite{H} for details.

Finally, we set $a_i=\langle e_{\sigma}(T\mathcal{F}),[S_i]\rangle$ and $b_i=\langle e_{\sigma_i}(T\mathcal{D}_i),[D_i]\rangle$, where $D_i$ is a meridional disc in $N_i$.

\begin{remark}
Notice that since $\mathcal{D}_i$ is the standard foliation of the solid torus by meridional discs, we have that $b_i$ coincides with $\pm\langle e_{\sigma_i}(TD_i),[D_i]\rangle =\pm \chi(D_i)=\pm 1$, where $TD_i$ denotes the tangent bundle of $D_i$ and where the sign depends on the orientation of the foliation $\mathcal{D}_i$.
\end{remark}

We are interested in knowing when $e(T\hat{\mathcal{F}})$ vanishes. The following proposition tells us exactly when this happens. Recall that without loss of generality we are supposing $p_1,p_2>0$, whereas the signs of $q_1$ and $q_2$ are arbitrary. 

\begin{prop}\label{orderability generale}
We have that $e(T\hat{\mathcal{F}})=0$ if and only if $e(T\mathcal{F})=0$ and $a_iq_i\equiv b_i  \pmod{p_i}$.
\end{prop}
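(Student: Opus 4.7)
The strategy is to compute $e(T\hat{\mathcal{F}}) \in H^2(N;\mathbb{Z})$ by combining obstruction theory with the Mayer--Vietoris decomposition $N = M \cup_{\partial M}(N_1 \sqcup N_2)$. The long exact sequence of the pair $(N,M)$, together with excision, reads
\[
H^1(M) \xrightarrow{\delta_N} H^2(N,M) \xrightarrow{j^*} H^2(N) \xrightarrow{i^*} H^2(M),
\]
where $H^2(N,M) \cong H^2(N_1,\partial N_1)\oplus H^2(N_2,\partial N_2) \cong \mathbb{Z}\oplus \mathbb{Z}$. Since the restriction of $T\hat{\mathcal{F}}$ to $M$ is $T\mathcal{F}$, we have $i^*(e(T\hat{\mathcal{F}})) = e(T\mathcal{F})$, so the vanishing of $e(T\mathcal{F})$ is necessary; whenever it holds, the class $e(T\hat{\mathcal{F}})$ lifts to $\tilde e \in H^2(N,M)$, well defined modulo $\mathrm{image}(\delta_N)$.

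I would first compute $\mathrm{image}(\delta_N)$ explicitly. The meridian dual $\mu_j^*\in H^1(M)$ is sent to a class in $H^2(N_i,\partial N_i)=\mathbb{Z}$ that pairs with $[D_i]$ as $\langle \mu_j^*,\partial D_i\rangle = \langle \mu_j^*,p_i\mu_i+q_i\lambda_i\rangle = p_i\,\delta_{ij}$. Hence $\mathrm{image}(\delta_N)=p_1\mathbb{Z}\oplus p_2\mathbb{Z}$ and $H^2(N)\cong \mathbb{Z}_{p_1}\oplus \mathbb{Z}_{p_2}$, matching $H_1(N)$ via Poincar\'e duality.

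Now assume $e(T\mathcal{F})=0$ and choose a nowhere-vanishing section $\tau$ of $T\mathcal{F}$ on $M$. Its boundary value $\tau|_{\partial M}$ differs from $\sigma$ by some class $c\in H^1(\partial M;\mathbb{Z})$ with $\delta c = e_\sigma(T\mathcal{F})$; evaluating on $[S_i]$ and using $\partial S_i = \lambda_i$ yields $\langle c,\lambda_i\rangle = a_i$. A generic section of $T\hat{\mathcal{F}}$ on $N$ extending $\tau$ has all its zeros in the interiors of the $N_i$, so the lift $\tilde e$ is represented componentwise by the relative Euler numbers $\langle e_{\tau|_{\partial N_i}}(T\mathcal{D}_i),[D_i]\rangle$. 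Since the excerpt defines $\sigma_i$ as the restriction of $\sigma$ to $T_i$ re-interpreted as a section of $T\mathcal{D}_i|_{\partial N_i}$, there is no extra winding between $\sigma_i$ and $\sigma|_{T_i}$, and the change-of-trivialization formula gives
\[
\langle \tilde e,[D_i]\rangle \;=\; b_i + \langle c|_{T_i},\partial D_i\rangle \;=\; b_i + p_i\langle c,\mu_i\rangle + q_i\,a_i.
\]
Reducing modulo $p_i$ collapses the ambiguous term $p_i\langle c,\mu_i\rangle$ (which is precisely the part quotiented out by $\mathrm{image}(\delta_N)$), so the $i$-th component of $e(T\hat{\mathcal{F}})\in \mathbb{Z}_{p_1}\oplus \mathbb{Z}_{p_2}$ is $b_i + q_i a_i\pmod{p_i}$; setting both components to zero yields the stated congruence $a_i q_i \equiv b_i \pmod{p_i}$ after reconciling signs.

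The main obstacle I anticipate is sign and orientation bookkeeping: one must pin down consistent conventions for the coboundary $\delta$, for the induced orientation on $\partial D_i = p_i\mu_i+q_i\lambda_i$, and for the common pointing direction of $\sigma$ and $\sigma_i$, in order to arrive at $a_iq_i\equiv b_i$ rather than $a_iq_i\equiv -b_i\pmod{p_i}$. The crucial geometric input that keeps the answer clean is the fact — explicit in the excerpt — that $\sigma_i$ is literally $\sigma|_{T_i}$, so the potential correction term coming from a nonzero difference class $d_i\in H^1(T_i;\mathbb{Z})$ does not appear; were it present, the right-hand side of the congruence would pick up an extra summand $q_i\langle d_i,\lambda_i\rangle$ modulo $p_i$ and the clean statement of the proposition would fail.
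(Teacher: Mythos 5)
Your route is sound and is a genuine repackaging of the paper's argument rather than a copy of it. The paper keeps all three relative classes $e_\sigma(T\mathcal{F})$, $e_{\sigma_1}(T\mathcal{D}_1)$, $e_{\sigma_2}(T\mathcal{D}_2)$ with respect to the single boundary trivialization $\sigma$, works inside the short exact sequence $0\to H^1(\partial M)\to H^2(\overline{M},\partial M)\to H^2(\overline{M})\to 0$ together with the Mayer--Vietoris splitting of $H^2(\overline{M},\partial M)$, and asks when the triple lies in the image of $H^1(\partial M)$. You instead use $e(T\mathcal{F})=0$ to choose an honest nowhere-zero section $\tau$ over $M$ first, push everything into $H^2(N,M)\cong H^2(N_1,\partial N_1)\oplus H^2(N_2,\partial N_2)$ via the pair sequence, and identify $\mathrm{image}(\delta_N)=p_1\Z\oplus p_2\Z$ (your computation of this image correctly uses linking number zero). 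This buys a very explicit picture of $H^2(N)\cong\Z_{p_1}\oplus\Z_{p_2}$ and of the two components of $e(T\hat{\mathcal{F}})$, at the cost of having to compare two different boundary trivializations on each $T_i$; the paper never trivializes over $M$ and so never faces that comparison. Both directions of the equivalence do come out of your setup, since the lift $\tilde e$ is well defined modulo $\mathrm{image}(\delta_N)$.

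The one point you must actually settle --- and which, as written, comes out wrong --- is the sign. Your formula $\langle\tilde e,[D_i]\rangle=b_i+p_i\langle c,\mu_i\rangle+q_ia_i$ yields the congruence $a_iq_i\equiv -b_i\pmod{p_i}$, not $a_iq_i\equiv b_i\pmod{p_i}$. This cannot be left to ``reconciling signs'': here $b_i=\pm1$ with sign determined by the sign of $q_i$, so the two congruences are genuinely different conditions and the wrong one would break Theorem \ref{theorem euler}. The resolution is that the difference class $c$ is used in opposite directions on the two sides of $T_i$: on the $M$ side it corrects $\sigma$ to $\tau$ (giving $\delta_M c=e_\sigma(T\mathcal{F})$ and hence $\langle c,\lambda_i\rangle=a_i$), while on the $N_i$ side you must convert from the trivialization $\tau|_{\partial N_i}$ back to $\sigma_i=\sigma|_{T_i}$, which uses the \emph{negative} of $c|_{T_i}$ (the fiber orientation of $T\hat{\mathcal{F}}|_{T_i}$, hence the relative winding, is the same seen from either side of $T_i$; only the order of comparison reverses). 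Thus $\langle\tilde e,[D_i]\rangle=b_i-p_i\langle c,\mu_i\rangle-q_ia_i$, and reducing modulo $p_i$ gives exactly $a_iq_i\equiv b_i\pmod{p_i}$. With this correction your argument agrees with the paper's computation, in which a single class $\beta\in H^1(\partial M)$ satisfies $\delta_M\beta=e_\sigma(T\mathcal{F})$ and $(\delta_{N_i}\circ f_i^*)(\beta)=e_{\sigma_i}(T\mathcal{D}_i)$ simultaneously, with no relative sign precisely because no section over $M$ is ever chosen.
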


\begin{proof}
The statement of this proposition is the generalisation to our case of the statements of \cite[Lemma 3.1]{H} and \cite[Theorem 1.4]{H} and the proof that is presented there adapts almost unaltered. We give a brief sketch of the proof and refer to \cite{H} for the details. In what follows the cohomology and homology groups are all implicitly assumed with integer coefficients and we will denote with $\overline{M}$ the $(\sfrac{p_1}{q_1},\sfrac{p_2}{q_2})$-surgery on $M$. Since $\overline{M}$ is a rational homology sphere as a consequence of the long exact sequence of the pair $(\overline{M},\partial M)$ we have

\begin{equation}
0\rightarrow H^1(\partial M)\overset{\delta}{\rightarrow} H^2(\overline{M},\partial M) \overset{\iota}{\rightarrow} H^2(\overline{M})\rightarrow 0. \label{relative cohomology}
\end{equation}
Moreover as a consequence of the Mayer-Vietoris sequence there is an isomorphism
\begin{equation}
H^2(M,\partial M)\oplus H^2(N_1,\partial N_1)\oplus  H^2(N_2,\partial N_2)\cong H^2(\overline{M},\partial M) \label{iso}
\end{equation}
defined by mapping the relative classes $(c_M,c_{N_1},c_{N_2})$  to the sum $\overline{c_M}+\overline{c_{N_1}}+\overline{c_{N_2}}$, where each of these cohomology classes is obtained by extending  to $\overline{M}$ the corresponding relative class by the zero map.

By using the identification given by the isomorphism in \eqref{iso} we obtain a short exact sequence:
\begin{equation}
0\rightarrow H^1(\partial M)\overset{\psi}{\rightarrow}H^2(M,\partial M)\oplus H^2(N_1,\partial N_1)\oplus  H^2(N_2,\partial N_2) \overset{\varphi}{\rightarrow} H^2(\overline{M})\rightarrow 0 \label{relative cohomology+ iso}
\end{equation}
where
$$
\psi(\beta)=(\delta_M \beta, (\delta_{N_1}\circ f^*_1)(\beta),(\delta_{N_2}\circ f^*_2)(\beta))
$$
with $f_1:\partial N_1\hookrightarrow \partial M$ and $f_2:\partial N_2\hookrightarrow \partial M$ denoting the gluing maps of the solid tori and with
$$
\delta_M:H^1(\partial M)\rightarrow H^2(M,\partial M)
$$
$$
\delta_{N_1}:H^1(\partial N_1)\rightarrow H^2(N_1,\partial N_1)$$ 
$$\delta_{N_2}:H^1(\partial N_2)\rightarrow H^2(N_2,\partial N_2)
$$
denoting the maps appearing in the long exact sequences of the pairs $(M,\partial M)$, $(N_1,\partial N_1)$ and $(N_2,\partial N_2)$.
\\

We suppose now that $e(T\hat{\mathcal{F}})=0$. By naturality of the Euler class, $e(T\mathcal{F})$ is the image of $e(T\hat{\mathcal{F}})=0$ under the map induced by the inclusion $M\hookrightarrow \overline{M}$ and therefore we have that $e(T\mathcal{F})=0$.

Moreover it also holds that
$$
\varphi(e_{\sigma}(T\mathcal{F}),e_{\sigma_1}(T\mathcal{D}_1),e_{\sigma_2}(T\mathcal{D}_2))=e(T\hat{\mathcal{F}})=0
$$
and therefore there exists $\beta\in H^1(\partial M)$ such that $\psi(\beta)=(e_{\sigma}(T\mathcal{F}),e_{\sigma_1}(T\mathcal{D}_1),e_{\sigma_2}(T\mathcal{D}_2))$; in other words $\beta$ satisfies
$$
\begin{cases}
\delta_M \beta=e_{\sigma}(T\mathcal{F})\\
(\delta_{N_1}\circ f^*_1)(\beta)=e_{\sigma_1}(T\mathcal{D}_1)\\
(\delta_{N_2}\circ f^*_2)(\beta)=e_{\sigma_2}(T\mathcal{D}_2)
\end{cases}.
$$
The following calculation verifies that $a_iq_i\equiv b_i  \pmod{p_i}$:
$$
b_i=\langle e_{\sigma_i}(T\mathcal{D}_i), [D_i]\rangle=\langle (\delta_{N_i}\circ f_i^*)(\beta), [D_i]\rangle=\langle \beta,[f_i(\partial D_i)] \rangle=
$$
$$
=\langle \beta,p_i\mu_i+q_i\lambda_i \rangle=p_i\langle \beta,\mu_i \rangle +q_i\langle \beta,\lambda_i \rangle=p_i\langle \beta,\mu_i \rangle+ q_ia_i
$$
where in the last equality we have used that 
$$
\langle \beta,\lambda_i\rangle=\langle \beta,[\partial S_i]\rangle=\langle\delta_M \beta, [S_i]\rangle=\langle e_{\sigma}(T\mathcal{F}),[S_i]\rangle=a_i.
$$
We now prove that if $e(T\mathcal{F})=0$ and $a_iq_i\equiv b_i  \pmod{p_i}$ for $i=1,2$, then $e(T\hat{\mathcal{F}})=0$.

We consider again the short exact sequence in \eqref{relative cohomology}. The nowhere vanishing section $\sigma$ defines an element $e_{\sigma}(T\hat{\mathcal{F}})\in H^2(\overline{M},\partial M)$ that satisfies $\iota(e_{\sigma}(T\hat{\mathcal{F}}))=e(T\hat{\mathcal{F}})$ and therefore if we prove that $e_{\sigma}(T\hat{\mathcal{F}})$ belongs to the image of $\delta: H^1(\partial M)\rightarrow H^2(\overline{M},\partial M)$ we obtain the thesis.
Morever under the isomorphism \eqref{iso} the element $(e_\sigma(T\mathcal{F}),e_{\sigma_1}(T\mathcal{D}_1),e_{\sigma_2}(T{\mathcal{D}_2}))$ corresponds to $e_{\sigma}(T\hat{\mathcal{F}})$ and therefore it is enough to prove that $(e_\sigma(T\mathcal{F}),e_{\sigma_1}(T\mathcal{D}_1),e_{\sigma_2}(T{\mathcal{D}_2}))$ belongs to the image of $\psi$ in the short exact sequence \eqref{relative cohomology+ iso}.

If we consider the long exact sequence of the pair $(M,\partial M)$ we have the following 
$$
H^1(M;\Z)\overset{\iota'_M}{\rightarrow} H^1(\partial M)\overset{\delta_M}{\rightarrow} H^2(M,\partial M)\overset{\iota''_M} \rightarrow H^2(M)
$$
and since $\iota''_M(e_{\sigma}(T\mathcal{F}))=e(T\mathcal{F})=0$ we deduce that there exists $\beta_0\in H^1(\partial M)$ such that $\delta_M(\beta_0)=e_{\sigma}(T\mathcal{F})\in H^2(M,\partial M)$. We now want to modify $\beta_0$ in order to find $\beta\in H^1(\partial M)$ that satisfies $$
\psi(\beta)=(e_\sigma(T\mathcal{F}),e_{\sigma_1}(T\mathcal{D}_1),e_{\sigma_2}(T{\mathcal{D}_2}))
$$
that is to say, such that
$$
\begin{cases}
\delta_M \beta=e_{\sigma}(T\mathcal{F})\\
(\delta_{N_1}\circ f^*_1)(\beta)=e_{\sigma_1}(T\mathcal{D}_1)\\
(\delta_{N_2}\circ f^*_2)(\beta)=e_{\sigma_2}(T\mathcal{D}_2).
\end{cases}
$$
We denote with $\mu_i^*\in H^1(\partial M)$ the dual of $\mu_i\in H_1(\partial M)$ and we define 
$$
\beta=\beta_0+n_1\mu_1^*+n_2\mu_2^* \quad \text{where }n_i=-\langle\beta_0,\mu_i\rangle-\frac{a_iq_i-b_i}{p_i}.
$$
Since $a_iq_i\equiv b_i  \pmod{p_i}$ for $i=1,2$ it follows that $n_i$ is an integer. Moreover, since $\beta-\beta_0\in \iota'_M(H^1(M))$ we have that $\delta_M(\beta_0)=\delta_M(\beta)=e_{\sigma}(T\mathcal{F})\in H^2(M,\partial M)$. We have to prove that $(\delta_{N_i}\circ f^*_i)(\beta)=e_{\sigma_i}(T\mathcal{D}_i)$ for $i=1,2$. Since 
$$
H^2(N_i,\partial N_i)\cong {\rm Hom}(H_2(N_i,\partial N_i,\Z))
$$
it is enough to prove that $\langle (\delta_{N_i}\circ f^*_i)(\beta),[D_i]\rangle=\langle e_{\sigma_i}(T\mathcal{D}_i), [D_i]\rangle$ and this is a consequence of the following computation (the case $i=2$ is analogous). 
$$
\langle (\delta_{N_1}\circ f^*_1)(\beta),[D_1]\rangle=\langle \beta, f_1(\partial D_1)\rangle=
$$
$$
=\langle \beta_0, p_1\mu_1+q_1\lambda_1\rangle+n_1\langle \mu_1^*,p_1\mu_1+q_1\lambda_1\rangle + n_2\langle \mu_2^*,p_1\mu_1+q_1\lambda_1\rangle=
$$
$$
=p_1\langle \beta_0,\mu_1\rangle+a_1q_1+p_1 \left(-\langle\beta_0,\mu_1\rangle-\frac{a_1q_1-b_1}{p_1}\right)=b_1=\langle e_{\sigma_1}(T\mathcal{D}_1), [D_1]\rangle
$$
where in the last line we have used again that
$$
\langle \beta_0,\lambda_1\rangle=\langle \beta_0,[\partial S_1]\rangle=\langle\delta_M \beta_0, [S_1]\rangle=\langle e_{\sigma}(T\mathcal{F}),[S_1]\rangle=a_1.
$$ 
and that $\langle\mu_2^*,\mu_1\rangle=\langle\mu_2^*,\lambda_1\rangle=0$
\end{proof}

\begin{namedtheorem}[\ref{theorem euler}]
Let $S^3_{\sfrac{p_1}{q_1}, \sfrac{p_2}{q_2}}({\rm WL})$ be the $\left(\sfrac{p_1}{q_1}, \sfrac{p_2}{q_2}\right)$-surgery on the Whitehead link, with $q_1,q_2\ne 0$ and $p_1,p_2>0$.

Then the taut foliations constructed in the proof of the Theorem \ref{theorem} have vanishing Euler class if and only if $|q_i|\equiv 1 \pmod{p_i}$ for $i=1,2.$

In particular, for all these manifolds the L-space conjecture holds.
\end{namedtheorem}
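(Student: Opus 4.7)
The plan is to reduce, via Proposition \ref{orderability generale}, the vanishing of $e(T\hat{\mathcal{F}})$ to two conditions: (a) $e(T\mathcal{F}) = 0$ in $H^2(M;\mathbb{Z})$, and (b) the congruences $a_iq_i \equiv b_i \pmod{p_i}$ for $i = 1, 2$. The quantity $b_i$ is immediate: since $\mathcal{D}_i$ is the standard foliation of the solid torus $N_i$ by meridional discs, $\langle e_{\sigma_i}(T\mathcal{D}_i), [D_i]\rangle = \pm\chi(D_i) = \pm 1$, with sign dictated by the chosen orientation of $\mathcal{D}_i$.

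The core of the proof is the explicit computation of $a_i = \langle e_\sigma(T\mathcal{F}), [S_i]\rangle$ for the specific foliations built in Section 3. Each such $\mathcal{F}$ is obtained by fully carrying a lamination on a laminar branched surface $B$ in $M$ (whose branch sectors include pieces of the fiber surface $S$ of the Whitehead link) and then foliating the product complementary regions with parallel leaves. I would compute $a_i$ by isotoping $S_i$ into a position transverse to $\mathcal{F}$, so that $S_i \cap \mathcal{F}$ becomes a singular foliation on $S_i$ whose index data can be read off from the combinatorics of $B \cap S_i$, and then comparing with the winding of $\sigma$ along $\partial S_i = \lambda_i$. The expected output is $a_i = \pm 1$, with sign controlled by that of $q_i$ through the orientation of the foliation curves on $\partial M$; the same analysis should simultaneously exhibit a global nowhere-vanishing extension of $\sigma$ to a section of $T\mathcal{F}$ over $M$, establishing condition (a).

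With $a_i, b_i \in \{\pm 1\}$ and all signs pinned down, condition (b) becomes a concrete numerical statement. Careful tracking of the orientation conventions --- of the link components, of $\mathcal{F}$, of $\sigma$, and of the gluing $\partial D_i \leftrightarrow p_i\mu_i + q_i\lambda_i$ --- should reveal that the combined condition is exactly $|q_i| \equiv 1 \pmod{p_i}$. The main obstacle is precisely this sign bookkeeping: distinguishing $|q_i| \equiv 1 \pmod{p_i}$ from the weaker $q_i \equiv \pm 1 \pmod{p_i}$ (these conditions genuinely differ when $p_i > 2$) requires identifying absolute signs, not merely signs modulo $\pm 1$, which in turn demands a sign analysis analogous to, but subtler than, that in \cite{H}. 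Once the Euler class is known to vanish in the stated range, the L-space conjecture conclusion follows from Theorem \ref{euler class} (vanishing Euler class of a coorientable taut foliation on a rational homology sphere implies left orderability) combined with the equivalence $1) \Leftrightarrow 2)$ of Theorem \ref{theorem}.
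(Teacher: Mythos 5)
Your reduction via Proposition \ref{orderability generale} and your identification of $b_i=\pm 1$ match the paper, but the heart of the argument --- actually determining the integers $a_i=\langle e_{\sigma}(T\mathcal{F}),[S_i]\rangle$ with their signs --- is left as a plan whose outcome you only guess, and the guess is off: you predict $a_i=\pm 1$ \emph{with sign controlled by that of $q_i$}, whereas in fact $a_i=-1$ for both $i$ regardless of the signs of the $q_i$; it is $b_i$ (computed in the proof of Theorem 1.7 of \cite{H}: $b_i=1$ for $q_i<0$ and $b_i=-1$ for $q_i>0$) that flips with the sign of $q_i$. Since the whole point, as you yourself note, is to distinguish $|q_i|\equiv 1\pmod{p_i}$ from the weaker $q_i\equiv\pm 1\pmod{p_i}$, misplacing which of $a_i,b_i$ carries the $q_i$-dependent sign would derail the final congruence. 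The idea you are missing is the one that lets the paper avoid all direct index-counting on $S_i$: from \cite{H} one has $\langle e_{\sigma}(T\mathcal{F}),[S]\rangle=\chi(S)=-2$ for the fiber surface $S$, and since $[S]=[S_1]+[S_2]$ this gives $a_1+a_2=-2$; Thurston's inequality $|\langle e_{\sigma}(T\mathcal{F}),[F]\rangle|\leq|\chi(F)|$ (\cite[Corollary~1, p.~118]{Th}) applied to the one-holed tori $S_i$ gives $|a_i|\leq 1$, and the two facts together force $a_1=a_2=-1$. Your proposed transversality/index computation on $S_i$ is exactly the ``subtler sign analysis'' you flag as the main obstacle, and the proposal does not carry it out.

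A smaller but real issue is condition (a). You propose to establish $e(T\mathcal{F})=0$ by exhibiting a global nowhere-vanishing section of $T\mathcal{F}$ over $M$ as a byproduct of the same analysis; this is not supplied and is essentially equivalent to what you are trying to prove. The paper instead observes that the inclusion of a boundary torus $T$ induces an isomorphism $H^2(M;\Z)\to H^2(T;\Z)$, so it suffices to note that $e(T(\mathcal{F}_{|T}))=0$ because $\mathcal{F}_{|T}$ visibly admits a nowhere-vanishing tangent vector field. With $a_i=-1$ and the stated values of $b_i$, Proposition \ref{orderability generale} then yields $q_i\equiv 1\pmod{p_i}$ when $q_i>0$ and $q_i\equiv -1\pmod{p_i}$ when $q_i<0$, i.e.\ $|q_i|\equiv 1\pmod{p_i}$, and the orderability conclusion via Theorem \ref{euler class} is as you describe.
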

\begin{proof}
First of all we prove that $e(T\mathcal{F})=0$. In fact, let $T$ denote one of the boundary components of $M$; the inclusion of $T$ in $M$ induces an isomorphism $\iota:H^2(M;\Z)\rightarrow H^2(T; \Z)$. Therefore we have
$$
e(T\mathcal{F})=0 \Leftrightarrow \iota(e(T\mathcal{F}))=0.
$$
By naturality of the Euler class we have that 
$$
\iota(e(T\mathcal{F}))=e(T(\mathcal{F}_{|T}))
$$
and since $\mathcal{F}_{|T}$ admits a nowhere vanishing section we have that the last quantity is zero.

We now want to compute the numbers $a_i$. As a consequence of the proof of Theorem $1.7$ in \cite{H} we have that
\begin{itemize}
    \item $b_i=1$ if $q_i<0$, for $i=1,2$;
    \item $b_i=-1$ if $q_i>0$, for $i=1,2$;
    \item $\langle e_{\sigma}(T\mathcal{F}),[S]\rangle =\chi(S)=-2$.
\end{itemize}
 Since by construction $S$ intersects positively in one point the meridians of the components of the Whitehead link, we have the equality  $[S]=[S_1+S_2]$ in $H_2(M,\partial M;\Z)$ and hence
$$
a_1+a_2=\langle e_{\sigma}(T\mathcal{F}),[S_1]\rangle +\langle e_{\sigma}(T\mathcal{F}),[S_2]\rangle =\chi(S)=-2.
$$

As a consequence of \cite[Corollary~1, p.~118]{Th} for any $[F]\in H_2(M,\partial M;\Z)$ we have the inequality
$$
|\langle e_{\sigma}(T\mathcal{F}),[F]\rangle |\leq |\chi(F)|
$$
and since $S_1$ and $S_2$ are $1$-holed tori, this implies that $a_i=\langle e_{\sigma}(T\mathcal{F}),[S_i]\rangle =-1$ for $i=1,2$.
Therefore, by virtue of Proposition \ref{orderability generale} we have $e(T\hat{\mathcal{F}})=0$ if and only if for each $i=1,2$ it holds one of the following:
\begin{itemize}
    \item $q_i$ is positive and $q_i\equiv 1\quad \pmod{p_i}$;
    \item $q_i$ is negative and $q_i\equiv-1\quad \pmod{p_i}$.
\end{itemize}

In other words $e(T\hat{\mathcal{F}})=0$ if and only if 
$$
|q_i|\equiv 1 \quad \pmod{p_i} \quad\text{for } i=1,2
$$
that is exactly what we wanted.
\end{proof}
We point out the following straightforward consequence of Theorem \ref{theorem euler}.
\begin{cor}\label{corollary negative integers}
Let $d_1,d_2$ be two integers such that $d_1<0$ or $d_2<0$. Then the manifold $S^3_{d_1,d_2}({\rm WL})$ satisfies the L-space conjecture.\hfill{$\square$}
\end{cor}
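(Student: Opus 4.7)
The corollary will follow by assembling Theorem \ref{theorem}, Theorem \ref{theorem euler}, and Theorem \ref{euler class} directly, with essentially no further work. The plan is first to dispose of the degenerate subcase $d_1 = 0$ or $d_2 = 0$, in which $S^3_{d_1,d_2}({\rm WL})$ fails to be a rational homology sphere; whenever such a filling happens to give a rational homology sphere at all it reduces to $S^3$ or a lens space, as recorded in the remark following Theorem \ref{theorem}, and the L-space conjecture is already known in those cases. From that point on I may assume $d_1, d_2 \ne 0$.

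Since at least one of $d_1, d_2$ is by hypothesis strictly negative, the corresponding slope is strictly less than $1$, so Theorem \ref{theorem} immediately furnishes two of the three ingredients of Conjecture \ref{L space conjecture}: the manifold $S^3_{d_1,d_2}({\rm WL})$ is not an L-space and it carries a cooriented taut foliation $\hat{\mathcal{F}}$, namely the one constructed in the proof of Theorem \ref{theorem part 2}. The only remaining task is therefore to upgrade this to left orderability of the fundamental group.

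To invoke Theorem \ref{theorem euler} I rewrite each integer surgery slope in the normalised form $d_i = p_i/q_i$ with $p_i > 0$, which forces $q_i = \operatorname{sgn}(d_i) \in \{\pm 1\}$. In particular $|q_i| = 1$, so the congruence $|q_i| \equiv 1 \pmod{p_i}$ holds trivially for $i = 1, 2$. Theorem \ref{theorem euler} then yields $e(T\hat{\mathcal{F}}) = 0$, and Theorem \ref{euler class} converts this vanishing into left orderability of $\pi_1(S^3_{d_1,d_2}({\rm WL}))$. All three conditions of the L-space conjecture are thereby simultaneously verified. Since the argument is a pure assembly of statements already proved in the paper, I do not foresee any real obstacle; the only verification required is the trivial identity $|q_i| = 1$, which is what makes integer surgery particularly well behaved from the point of view of the Euler class criterion of Theorem \ref{theorem euler}.
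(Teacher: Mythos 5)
Your proof is correct and is exactly the argument the paper intends: the corollary is left as an immediate consequence of Theorem \ref{theorem euler}, and the key observation is precisely yours, namely that an integer slope $d_i=p_i/q_i$ with $p_i>0$ forces $q_i=\pm1$, so $|q_i|\equiv 1 \pmod{p_i}$ holds automatically and the constructed foliation has vanishing Euler class. The handling of the degenerate case $d_1=0$ or $d_2=0$ matches the remark following Theorem \ref{theorem}.
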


We conclude by collecting from the literature some results regarding the orderability (or non-orderability) of some surgeries on the Whitehead link, obtaining a generalisation of Corollary \ref{corollary negative integers}.

\begin{namedtheorem}[\ref{Z+RS}]
Let $m\ne0$ be an integer.
\begin{itemize}
    \item If $m \leq-1$ then the manifolds $S^3_{m,\sfrac{p}{q}}({\rm WL})$ and $S^3_{\sfrac{p}{q},m}({\rm WL})$  have left orderable fundamental group for all rationals $\sfrac{p}{q}$.
    \item If $m \geq 1$ then the manifolds $S^3_{m,\sfrac{p}{q}}({\rm WL})$ and $S^3_{\sfrac{p}{q},m}({\rm WL})$ have non left orderable fundamental for all rationals $\sfrac{p}{q}\geq 1$.
\end{itemize}
In particular, all the rational homology spheres obtained by integer surgery on {\rm WL} satisfy the L-space conjecture.
\end{namedtheorem}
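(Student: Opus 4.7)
My plan is to combine Theorem \ref{theorem euler} with external left-orderability input from \cite{Z} and \cite{RSS}, and then to read off the L-space conjecture for integer surgeries as a clean corollary.

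For the second bullet, fix $m\geq 1$ and $\sfrac{p}{q}\geq 1$. Corollary \ref{theorem part 1} together with the component-swapping symmetry of {\rm WL} immediately gives that $S^3_{m,\sfrac{p}{q}}({\rm WL})$ and $S^3_{\sfrac{p}{q},m}({\rm WL})$ are both L-spaces, hence carry no coorientable taut foliation. I would cite the results of \cite{RSS}, used as a black box, to deduce non-left-orderability of $\pi_1$ for exactly this family of L-space fillings of {\rm WL}.

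For the first bullet, fix $m\leq -1$. For every rational $\sfrac{p}{q}$ producing a rational homology sphere, Theorem \ref{theorem part 2} yields a coorientable taut foliation $\hat{\mathcal{F}}$ on $S^3_{m,\sfrac{p}{q}}({\rm WL})$, because $m<1$. Writing the slopes in the standard form with positive numerators, we obtain $p_1=|m|>0$, $q_1=-1$, $p_2=p>0$, $q_2=q$, placing us inside the hypotheses of Theorem \ref{theorem euler}; that theorem gives $e(T\hat{\mathcal{F}})=0$ if and only if $|q_i|\equiv 1\pmod{p_i}$ for $i=1,2$. Since $|q_1|=1$ satisfies its congruence automatically, I conclude $e(T\hat{\mathcal{F}})=0$ exactly when $|q|\equiv 1\pmod{|p|}$, and then Theorem \ref{euler class} together with Theorem \ref{characterization of orderability} produces left orderability of $\pi_1$ for all slopes satisfying this congruence. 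The remaining slopes I would cover by quoting the left-orderability arguments of \cite{Z} and \cite{RSS} as black boxes. The symmetric statement for $S^3_{\sfrac{p}{q},m}({\rm WL})$ is then transported across by the component-swapping symmetry of {\rm WL}.

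For the integer-surgery conclusion, let $(d_1,d_2)\in\Z^2$ with $d_1d_2\ne 0$, so that $S^3_{d_1,d_2}({\rm WL})$ is a rational homology sphere. If $d_1,d_2\geq 1$, Theorem \ref{theorem} gives both L-space status and absence of any coorientable taut foliation, while the second bullet gives non-left-orderability; all three conditions of Conjecture \ref{L space conjecture} then fail simultaneously. If instead $d_i\leq -1$ for some $i$, Theorem \ref{theorem} yields a coorientable taut foliation and non-L-space status, while the integer case of the first bullet (which requires no external input, since $|q_i|=1\equiv 1\pmod{|d_i|}$ is automatic) yields left orderability; all three conditions then hold. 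Either way the three conditions coincide, establishing the L-space conjecture for all integer rational-homology-sphere surgeries on {\rm WL}. The main obstacle is the rational-slope portion of the first bullet when $|q|\not\equiv 1\pmod{|p|}$, and the non-orderability assertion of the second bullet: both rest on results from \cite{Z} and \cite{RSS} that are external to this paper, but reassuringly neither is needed for the final integer-surgery corollary.
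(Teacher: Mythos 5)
Your overall architecture matches the paper's: both bullets are ultimately outsourced to \cite{Z} and \cite{RSS}, and the integer corollary is assembled from Theorem \ref{theorem} together with the two bullets. However, there are concrete gaps. For the second bullet, \cite[Proposition 3.1]{RSS} only covers $m\geq 3$; the paper must observe separately that the technical lemmas of \cite{RSS} still apply when $m=2$, and the case $m=1$ is not covered by \cite{RSS} at all. There, $S^3_{1,\bullet}({\rm WL})$ is the exterior of the right-handed trefoil, and non-orderability of its slope-$\geq 1$ surgeries comes from Moser's classification: all but one are Seifert fibered L-spaces (for which the conjecture is known), and the remaining one is a connected sum of two lens spaces, whose fundamental group has torsion. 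Citing \cite{RSS} ``as a black box for exactly this family'' therefore leaves $m=1$ (and, strictly, $m=2$) unproved. Relatedly, your closing claim that neither external result is needed for the integer-surgery corollary is false: for $d_1,d_2\geq 1$ the manifold is an L-space, so verifying the conjecture requires proving non-left-orderability, which is precisely the second bullet and hence does rest on \cite{RSS} and the trefoil argument. Your own assembly of the corollary invokes the second bullet at exactly that point, contradicting the final remark.

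Two further comments. In the first bullet you propose to cover the slopes with $|q|\not\equiv 1\pmod{p}$ by quoting ``the left-orderability arguments of \cite{Z} and \cite{RSS}''; \cite{RSS} proves non-orderability and is irrelevant here. The correct input is \cite[Theorem~1]{Z}, and applying it requires knowing that $S^3_{m,\bullet}({\rm WL})$ for $m\leq -1$ is a once-punctured-torus bundle whose monodromy extends to an Anosov diffeomorphism of the torus preserving the orientations of its invariant foliations (the paper takes this from \cite{HMW}); your proposal omits this verification, which is the only nontrivial step in that direction. On the positive side, your use of Theorem \ref{theorem euler} to obtain left orderability directly for the subfamily $|q|\equiv 1\pmod{p}$ is valid and is a genuine (partial) alternative to \cite{Z}; it is exactly how the paper's Corollary \ref{corollary negative integers} handles the integer case, where the congruence is automatic.
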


\begin{proof}
The manifold $S^3_{m,\bullet}({\rm WL})$ fibers over the circle if and only if $m$ is an integer (see \cite{HMW}). Moreover, in this case the fiber is a punctured torus.
\begin{itemize}
    \item When $m\leq -1$ the monodromy of $S^3_{m,\bullet}({\rm WL})$ can be extended to an Anosov diffeomorphism $\phi$ of the torus that preserves the orientations of its stable and unstable foliations, see \cite{HMW}.  The manifold $S^3_{m,\sfrac{p}{q}}({\rm WL})$ can be obtained by surgery along a closed orbit of $\phi$ in the mapping torus $M_{\phi}$ and as a consequence of \cite[Theorem ~1]{Z} we have that all the non-trivial fillings of $S^3_{m,\bullet}({\rm WL})$ have left orderable fundamental group. Since ${\rm WL}$ is symmetric, the same result holds for $S^3_{\bullet, m}({\rm WL})$.
    \item When $m\geq 3$ and $\sfrac{p}{q}\geq 1$, the fundamental group of the manifold $S^3_{m,\sfrac{p}{q}}({\rm WL})$ was studied by Roberts, Shareshian and Stein in \cite[Proposition 3.1]{RSS}, where they prove that it is not orderable. The technical details of their result are contained in the proofs of \cite[Lemma ~3.5]{RSS} and \cite[Corollary ~3.6]{RSS} and these also work in the case $m=2$ (notice that in their notation, this is the case ``$m=0$'').
    When $m=1$ the manifold $S^3_{1,\bullet}({\rm WL})$ is the exterior of the right-handed trefoil knot and its surgeries are well known \cite{Moser}. In fact all surgeries but one yield Seifert fibered manifolds and we have already proved that these are $L$-spaces; since the $L$-space conjecture holds for Seifert fibered manifolds we deduce that these surgeries are all non-orderable. The remaining surgery is a connected sum of two lens spaces and since its fundamental group has torsion it is not orderable as well.
\end{itemize}
This concludes the proof.
\end{proof}

\begin{remark}
Notice that even if the content of Corollary \ref{corollary negative integers} is generalised by Theorem \ref{Z+RS}, the statement of Theorem \ref{theorem euler} is not; in fact there are also non-integer rationals $\sfrac{p_1}{q_1}$ and $\sfrac{p_2}{q_2}$ that satisfy the hypotheses of Theorem \ref{theorem euler}.  
\end{remark}

In \cite{D} Dunfield considers a census of more than 300,000 hyperbolic rational homology spheres, testing the conjecture for this census. These manifolds are obtained by filling $1$-cusped hyperbolic $3$-manifolds that can be triangulated with at most $9$ ideal tetrahedra, see \cite{Burton}. We checked whether some of these manifolds studied by Dunfield arise as Dehn surgery on the Whitehead link and obtained the following

\begin{prop}\label{prop dunfield}
Among the $307,301$ rational homology spheres studied in \cite{D} at least $625$ are obtained as Dehn surgery on the Whitehead link. In \cite{D} it is proved that:
\begin{itemize}
    \item $300$ of these manifolds are orderable;
    \item $250$ are non-orderable.
\end{itemize} It follows from Theorem \ref{Z+RS} that $16$ of the remaining $75$ manifolds are orderable and $10$ are non-orderable.
\end{prop}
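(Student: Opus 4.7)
The plan is to establish this proposition by a computer-assisted census comparison, combined with a direct application of Theorem \ref{Z+RS}. First I would enumerate a large set of surgery slopes $(\sfrac{p_1}{q_1},\sfrac{p_2}{q_2})$ on the Whitehead link and compute, using SnapPy (or an equivalent tool), the resulting closed manifolds together with their geometric invariants (volume, Chern--Simons invariant, first homology, length spectrum, isometry signature). One then cross-references these invariants against Dunfield's published census of $307{,}301$ hyperbolic rational homology spheres of \cite{D}, rigorously matching manifolds via the isometry signature to avoid false positives coming from distinct geometric realisations. This census-matching step is what produces the count of at least $625$ manifolds in the census that arise as WL-surgeries.

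Next, for each of the $625$ matched manifolds I would read off from \cite{D} the known orderability status: $300$ are recorded there as orderable and $250$ as non-orderable, leaving a residual set $\mathcal{R}$ of $75$ manifolds whose orderability was undecided in \cite{D}. For each manifold in $\mathcal{R}$, I would inspect the surgery coefficients $(\sfrac{p_1}{q_1},\sfrac{p_2}{q_2})$ under which it appears in our enumeration and test whether they fall into the range covered by Theorem \ref{Z+RS}: either one of the coefficients is an integer $m\le -1$ (yielding left orderability of $\pi_1$), or one is an integer $m\ge 1$ while the other rational is $\ge 1$ (yielding non-orderability). Tallying these hits gives the claimed $16$ newly orderable and $10$ newly non-orderable manifolds.

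The main obstacle is the census identification step: the same hyperbolic $3$-manifold admits infinitely many surgery descriptions on WL, so one must bound how large a search range over $(p_1,q_1,p_2,q_2)$ suffices to hit every census manifold which is in fact a WL-surgery, and one must certify matches rigorously (the isometry signature of SnapPy, verified in exact arithmetic, is the standard tool). A secondary subtlety is that Dunfield's census lists manifolds as fillings of particular cusped triangulations with at most $9$ tetrahedra, so I would double-check that the matching does not miss WL-surgeries whose underlying cusped parent in \cite{Burton} is not obviously the Whitehead link exterior. Once the list of $625$ is fixed, the rest of the argument is purely a mechanical look-up and an application of the criteria of Theorem \ref{Z+RS}, so no further mathematical input is required.
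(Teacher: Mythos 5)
Your proposal matches the paper's approach: the paper offers no written argument for this proposition beyond referring to the computer code at \cite{Code}, and the computation it performs is exactly the census-matching plus look-up of orderability status in \cite{D} and application of Theorem \ref{Z+RS} that you describe. Note only that since the claim is ``at least $625$,'' you do not actually need to bound the search range so as to capture \emph{every} census manifold arising as a WL-surgery --- a finite enumeration with rigorous identification of matches already suffices.
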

The code of the program can be found at \cite{Code}.
The surgery coefficients yielding these manifolds are plotted in Figure \ref{plot}.

\begin{figure}[H]
   \centering
    \includegraphics[width=0.6\textwidth]{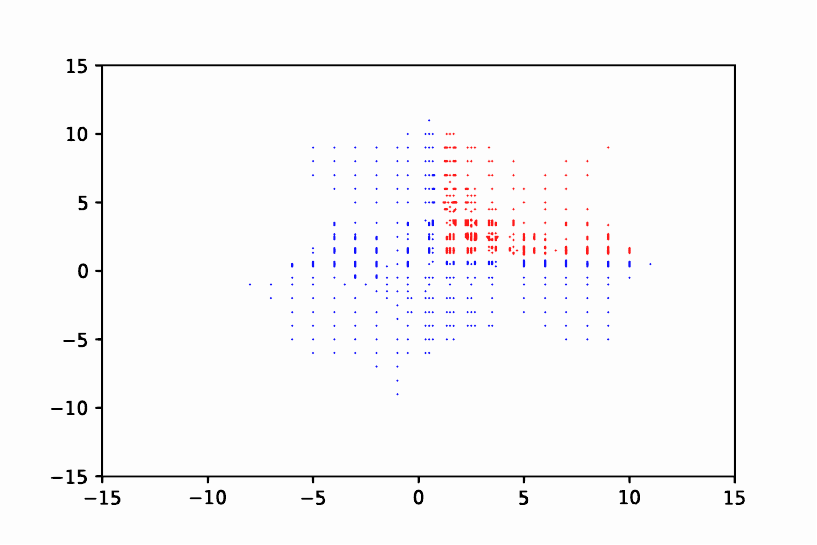}
    \caption{ In this figure the red dots represent the coefficients whose corresponding surgery is non-orderable and the blue dots represent the coefficients whose corresponding surgery is orderable.}
    \label{plot}
\end{figure}

The examples of Theorem \ref{Z+RS} and Proposition \ref{prop dunfield} are consistent with the conjecture, and together with Theorem \ref{theorem} confirm that the L-space conjecture holds also for all these manifolds.

\bibliographystyle{alpha}
\bibliography{main}

\textsc{Scuola Normale Superiore, Piazza dei Cavalieri 7, Pisa}\\
Email address: \texttt{diego.santoro95@gmail.com}

\end{document}